\newtheorem{theorem}{Theorem}[section]
\newtheorem{remark}[theorem]{Remark}
\newtheorem{proposition}{Proposition}
\newtheorem{lemma}[theorem]{Lemma}
\newtheorem{problem}[theorem]{Problem}
\newcommand{\vect}[1]{\boldsymbol{\mathbf{#1}}} 
\renewcommand{\div}[1]{\operatorname{div}\left(#1\right)}
\newcommand{\curl}{\nabla \times } 
\newcommand{\ferj}[1]{{\color{black}{#1}}}
\newcommand{\fergy}[1]{{\color{black}{#1}}}
\newcommand{\correction}[1]{{\color{black}{#1}}}
\DeclareMathOperator*{\argmin}{arg\,min}
\newcommand{\un}{u_{N}}
\newcommand{\ud}{u_{D}}
\newcommand{\uo}{\un} 
\newcommand{\uok}{\un^{k}} 
\newcommand{\uokp}{\un^{k+1}} 
\newcommand{\dotun}{\dot{u}_{N}}
\newcommand{\dotud}{\dot{u}_{D}}
\newcommand{\unp}{u^{\prime}_{N}}
\newcommand{\udp}{u^{\prime}_{D}}
\newcommand{\JD}{J_{D}}
\newcommand{\JN}{J_{N}}
\newcommand{\pn}{p_{N}}
\newcommand{\pd}{p_{D}}
\newcommand{\HGamma}{\ferj{V_{\Gamma}(\Omega)}}
\newcommand{\HSigma}{\ferj{V_{\Sigma}(\Omega)}}
\newcommand{\interval}{I}
\newcommand{\constant}{c}
\newcommand{\dd}{\delta}
\newcommand{\bb}{\vect{b}}
\newcommand{\sfTheta}{\vect{\Theta}}
\newcommand{\VV}{\vect{\theta}}
\newcommand{\Vn}{\theta_n} 
\newcommand{\nn}{\vect{n}}
\newcommand{\dn}[1]{\partial_{\nn}{#1}}
\newcommand{\intO}[1]{\int_{\Omega}{#1}{\, {d} x}}
\newcommand{\intdO}[1]{\int_{\partial\Omega}{#1}{\, {d} s}}
\newcommand{\intOt}[1]{\int_{\Omega_{t}}{#1}{\, {d} x_{t}}} 
\newcommand{\intS}[1]{\int_{\Sigma}{#1}{\, {d} s}}  
\newcommand{\intG}[1]{\int_{\Gamma}{#1}{\, {d} s}}
\newcommand{\norm}[1]{\left\|{#1}\right\|} 
\newcommand{\abs}[1]{\left|{#1}\right|}   
\begin{document} 
\title{Numerical solution by shape optimization method to an inverse shape problem in multi-dimensional advection-diffusion problem with space dependent coefficients}
\author{Elmehdi Cherrat$^{\ast}$, Lekbir Afraites$^{\ast}$ and Julius Fergy T. Rabago$^{\dagger}$}

\date{%
	{\footnotesize
        $^\ast$Mathematics and Interactions Teams (EMI)\\%
        Faculty of Sciences and Techniques\\%
        Sultan Moulay Slimane University\\%
        Beni Mellal, Morocco\\\vspace{-2pt}
       	\texttt{${}^{1}$cherrat.elmehdi@gmail.com, \, ${}^{2}$l.afraites@usms.ma}\\[2ex]
	$^{\dagger}$Faculty of Mathematics and Physics\\%
	 Institute of Science and Engineering\\%
         Kanazawa University, Kanazawa 920-1192, Japan\\\vspace{-2pt}
        \texttt{jftrabago@gmail.com}}\\[2ex]           
    \today
}

\maketitle

\begin{abstract}
This work focuses on numerically solving a shape identification problem related to advection-diffusion processes with space-dependent coefficients using shape optimization techniques. 
Two boundary-type cost functionals are considered, and their corresponding variations with respect to shapes are derived using the adjoint method, employing the chain rule approach. 
This involves firstly utilizing the material derivative of the state system and secondly using its shape derivative. 
Subsequently, \fergy{ an alternating direction  method of multipliers (ADMM)} combined with the Sobolev-gradient-descent algorithm is applied to stably solve the shape reconstruction problem. 
Numerical experiments in two and three dimensions are conducted to demonstrate the feasibility of the methods.

\medskip

\textit{Keywords:}{ geometric inverse problem, advection-diffusion problem, shape optimization,  shape gradient method, alternating direction of method of multipliers.}
\end{abstract}

\section{Introduction}
\label{sec:Introduction} 
Understanding the transport of quantities like heat, mass, or momentum is crucial for predicting and controlling various natural and engineered systems. 
In advection-diffusion problems, transport primarily occurs through two mechanisms: advection, driven by fluid flow, and diffusion, caused by random molecular motion. 
For instance, pollutants in rivers are carried downstream by water flow (advection) but also spread out due to diffusion. 
The behavior of advection-diffusion systems varies depending on whether advection or diffusion dominates. 
In advection-dominated regimes, rapid transport occurs with sharp gradients, while diffusion-dominated regimes lead to slower, smoother dispersion. 
This paper examines a shape identification problem for advection-diffusion problems with space-dependent advection coefficient through shape optimization methods.

Various shape identification problems have been previously investigated in several domains, including inverse obstacle scattering problems \cite{Hettlich1998,HettlichRundell1998}, inverse conduction scattering problems \cite{KressRundell2001}, static and time-dependent inverse boundary problems involving perfectly conducting or insulating inclusions \cite{AfraitesDambrineKateb2007,ChapkoKressYoon1998,ChapkoKressYoon1999,HarbrechtTausch2011,HarbrechtTausch2013,YanMa2006}, shape detection in convection-diffusion problems \cite{YanHouGao2017} and unsteady advection-diffusion problems \cite{YanSuJing2014}, inverse geometric source problems \cite{AfraitesMasnaouiNachaoui2022}, boundary shape reconstructions with Robin conditions \cite{AfraitesRabago2025,Fang2022,FangZeng2009,RabagoAzegami2019a}, obstacle reconstruction in Stokes fluid flow \cite{CaubetDambrineKateb2013,CaubetDambrineKatebTimimoun2013,RabagoAfraitesNotsu2025,YanMa2008}, and more.

In this paper, we aim to identify the shape and location of an object $\omega \subset \mathbb{R}^{d}$, where $d \in \{2,3\}$, within a larger domain $D \subset \mathbb{R}^{d}$ using a pair of known datasets $(f,g)$ observed at the accessible boundary of $D$. 
Denoting by $\Omega$ the connected domain $D \setminus \overline{\omega}$, we investigate here a multi-dimensional advection-diffusion equation within the context of shape inverse problems. 
The equation describes, for example, the transport of contaminants in surface water.

Mathematically, given a simply connected domain $D \subset \mathbb{R}^{d}$ and assuming the existence of an unknown simply connected inclusion $\omega$, as well as functions $f$ and $g$ defined over the boundary $\partial D$, we are primarily interested in the shape optimization reformulation of the following overdetermined advection-diffusion problem:
\begin{equation}\label{eq:overdetermined}
\left\{
\begin{array}{rcll}
    -\div{\sigma\nabla{u}}+\bb \cdot \nabla{u} & = & 0 & \text{in }\;\; \Omega,\\
    u & = & f & \text{on }\;\Sigma,\\
    {\sigma}\dn{u} & = & g & \text{on }\;\Sigma,\\
    u & = & 0 & \text{on }\;\Gamma,
\end{array}
\right.
\end{equation}
where $\Sigma := \partial{D}$, $\Gamma := \partial{\omega}$, $\sigma:=\sigma(x)$, $x \in D$, denotes the diffusion coefficient, $u:=u(x)$, $x\in \Omega$, is the concentration of the contaminant, $\bb:=\bb(x)$, $x \in D $, the velocity of the fluid flow, and $\dn{u} $ is the outward normal derivative of $u$ on $\Sigma$.
\fergy{We assume, in the general setup,} that the coefficients $\sigma$ and $\bb = (b_{1}, \ldots, b_{d})^{\top}$ satisfy the following conditions:
%
%
%
%
\begin{equation}\label{eq:assumptions}
\left\{\
\begin{aligned}
	&\text{{$\sigma \in W^{1,\infty}(D)^{d \times d}$} and there exists $\sigma_{0}>0$ (ellipticity constant) such that for all $\xi \in \mathbb{R}^{d}$,}\\
	&\qquad\qquad \text{$\sum_{i,j = 1}^{2} \sigma_{ij} \xi_{i} \xi_{j} \geqslant \sigma_{0} \|\xi\|_{d}^{2}$, \qquad almost everywhere in $D$;}\\
	&\text{and there is a constant $b_{0} > 0$ such that, for $i\in \{1,\ldots,d\}$, $b_{0} < b_{i} \in {W}^{1,\infty}(D)$.}
\end{aligned}
\right.\tag{A}
\end{equation}
\fergy{To simplify the analysis and avoid cumbersome notation, we assume, unless otherwise specified, that $\sigma$ is a space-dependent scalar function (i.e., $\sigma \in W^{1,\infty}(D)^{1\times 1}$).}

%
%
%
%

\fergy{We assume, unless otherwise stated, that $f \in H^{3/2}(\Sigma)$, $f \not\equiv 0$.
Also, we let $g \in H^{1/2}(\Sigma)$ be an admissible boundary measurement corresponding to $f$.
This means that $g$ belongs to the image of the Dirichlet-to-Neumann map $\Lambda_{\Sigma}: f \in H^{3/2}(\Sigma) \mapsto \dn{\ud} \in H^{1/2}(\Sigma)$, where $\ud$ solves boundary value problem 
%
%
\begin{equation}\label{eq:state_ud}
\left\{
\begin{array}{rcll}
    -\div{\sigma\nabla \ud}+\bb \cdot \nabla \ud & = & 0 & \text{in $\Omega$},\\
    \ud & = & 0 & \text{on $\Gamma$},\\
    \ud & = & f & \text{on $\Sigma$}.
    \end{array}
\right.
\end{equation} 

If $g \in H^{1/2}(\Sigma)$ is given instead, then we let $f \in H^{3/2}(\Sigma)$ be an admissible boundary measurement by taking $f$ as the image of the Neumann-to-Dirichlet map $\Lambda_{\Sigma}^{-1}: g \in H^{1/2}(\Sigma) \mapsto \un=: f \in H^{3/2}(\Sigma)$, where $\un$ solves the partial differential equation (PDE) system
%
%
\begin{equation}\label{eq:state_un}
\left\{
\begin{array}{rcll}
    -\div{\sigma\nabla \un}+\bb \cdot \nabla \un & = & 0 & \text{in }\;\; \Omega,\\
    \un & = & 0 & \text{on }\;\Gamma,\\
    \sigma\dn{\un} & = & g & \text{on }\;\Sigma.
\end{array}
\right.
\end{equation} 
}

Our main objective then is to examine the inverse geometry problem that reads as follows:  
\[
	\text{Given $D$, $f$, and $g$, find}~~ \omega~~ \text{such that}~~u(D\setminus\overline{\omega}) ~~\text{satisfies}~~ \eqref{eq:overdetermined}.
\]
This problem has been studied extensively in the literature. Yan et al. \cite{YanHouGao2017} addressed shape identification in convection-diffusion problems using the adjoint method, while Yan et al. \cite{YanSuJing2014} applied the domain derivative method to unsteady advection-diffusion problems. 
Fernandez et al. \cite{Fernandezetal2021} used the topological derivative method for pollution source reconstruction governed by a steady-state convection-diffusion equation.

In this work, we reformulate the inverse problem as an optimal control problem where the shape is the unknown. 
We propose two least-squares misfit functionals: the \textit{tracking-the-Dirichlet-data} \eqref{eq:Dirichlet_cost_function} and \textit{tracking-the-Neumann-data} \eqref{eq:Neumann_cost_function}.
We rigorously analyze the optimization problem, derive the material derivative of the state, and use it to compute the shape gradient of these functionals. 
By introducing adjoint systems, we express the shape gradient without requiring state derivatives. 
For numerical experiments, we apply the Alternating Direction Method of Multipliers (ADMM), following the approach in \cite{RabagoHadriAfraitesHendyZaky2024}. 
This method effectively handles challenges such as noise and concave regions on unknown interior boundaries.
Our goal is to enhance shape optimization techniques by incorporating an auxiliary variable into the cost functionals \eqref{eq:Dirichlet_cost_function} and \eqref{eq:Neumann_cost_function}, which are iteratively minimized using ADMM.


The rest of the paper is organized as follows.
In Section \ref{sec:problem_setting}, we delve deeper into the problem configuration and discuss the reformulation of the associated shape optimization. 
Section \ref{sec:shape_derivatives} provides a rigorous demonstration of the existence of the material derivative of the state variables, along with detailing the equation verified by the shape derivative of the state variables. 
Following this, the latter part of the section characterizes the shape gradients of the considered functionals, first through the material derivative and then using the shape derivative of the state. 
Subsequently, in Section \ref{sec:numerical_experiments}, we present an algorithm based on gradient methods, solving an elliptic problem to determine the steepest descent direction in the $H^1$ space. 
This section presents a comprehensive series of numerical experiments aimed at exploring various shapes across two and three spatial dimensions. 
It includes both the results from conventional shape optimization methods and ADMM, and offers a comparative analysis of these methods with a particular focus on their accuracy in three-dimensional cases.
Finally, Section \ref{sec:conclusion} concludes the paper briefly, summarizing key findings and highlighting the major implications of this study.

\section{The problem setting}
\label{sec:problem_setting}
\subsection{The main problem}
Let us now be more precise with the important assumptions of the study. 
We let $D$ be an open, non-empty simply connected bounded set in $\mathbb{R}^{d}$, $d \in \{2,3\}$, of class ${C}^{1,1}$. 
We fix a real number $\delta > 0$ and define $\mathcal{A}$ as the collection of all ${C}^{1,1}$ open, non-empty sets $\omega$ strictly contained in $D$ and that are of distance $\delta$ from $\Sigma=\partial{D}$ such that $\Omega=D \setminus \overline{\omega}$ is connected; i.e., we define the following set
\begin{equation}
    \mathcal{A} := \{ \omega \Subset D \mid \text{$\omega \in {C}^{1,1}$ is an open set, $d(x,\partial D) > \delta, \ \forall x \in \omega$, and $D\setminus \overline{\omega}$ is connected}\}.
\end{equation}
Hereinafter, we say $\Omega$ is an \textit{admissible domain} if $\Omega=D \setminus \overline{\omega}$, for some $\omega \in \mathcal{A}$.
\ferj{In this case, for the sake of notation, we write $\Omega \in \mathcal{A}^{\circ}$.}

We tacitly assume here that we can find $\omega^{\ast}$ in $\mathcal{A}$ such that \eqref{eq:overdetermined} has a solution.
In other words, we assume that there is $\omega^{\ast} \in \mathcal{A}$ such that the surface measurement $g$ (or $f$, if $g$ is given instead) is obtained without error.
\fergy{Therefore, we propose the following more precise formulation of the inverse geometry problem:}
\begin{equation}\label{eq:inverse_problem}
	\text{Given $D$, $f$, and $g$, find $\omega \in \mathcal{A}$ such that $u(D\setminus\overline{\omega})$ solves \eqref{eq:overdetermined}.}
\end{equation}
The regularity assumptions imposed on the data $f$ and $g$ are more than we can actually expect.
\fergy{In reality, we can only assume that $\Sigma$ is Lipschitz, with $f \in H^{1/2}(\Sigma)$ and $g \in H^{-1/2}(\Sigma)$, in order to obtain $H^{1}$ regular state solutions. 
Higher regularity of the states can be achieved by imposing additional smoothness on the boundaries and the data (see, e.g., \cite[Thm.~2.4.2.5, p.~124, Sec.~2, p.~84, and p.~128]{Grisvard1985}). 
Such a result can be derived through a local regularity argument similar to the proof in \cite[Thm.~29]{BacaniPeichl2013} (see also \cite{BadraCaubetDambrine2011,Caubet2013} for similar results in the context of fluids).
In this work, we adopt the stated regularity assumptions to streamline many of the proofs.}

\fergy{Before we finish this subsection, we comment that a key theoretical aspect in inverse problems is identifiability, which refers to the uniqueness of the solution given the observed data. 
In the context of the present study, which focuses on the recovery of an unknown inclusion in a domain governed by an advection-diffusion equation, an explicit identifiability result is not yet available in the literature. 
However, such a result is crucial to justify the well-posedness of the inverse formulation. 
While classical methods, such as unique continuation and Carleman estimates, establish identifiability in simpler elliptic settings, their extension to advection-diffusion systems with geometric complexity presents significant challenges. 
Recent advances by Cao et al. \cite{CaoDiaoLiuZou2022} have shown that, under precise geometric conditions, a single far-field measurement is sufficient to uniquely determine both the shape and the boundary impedance of polyhedral obstacles in inverse scattering problems. 
These findings suggest that, with appropriate geometric and analytical assumptions, a similar identifiability result could be derived for the inverse advection-diffusion problem considered here. 
This opens up an important avenue for future theoretical work, potentially providing a solid foundation for the proposed numerical identification framework.}
\subsection{Shape optimization reformulations}\label{subsec:formulations} 
To solve \eqref{eq:inverse_problem}, we reformulate it into shape optimization setting and apply the concept of shape calculus to solve the resulting optimization problem numerically. 
To this end, we consider two such reformulations of \eqref{eq:inverse_problem} by choosing one of the boundary conditions on the unknown boundary to obtain a well-posed state equation, and then track the remaining boundary data in $L^{2}$ sense over $\Sigma$.
More precisely, we consider the following minimization problems
\fergy{
\begin{align}
	\omega^{\ast} \in \operatorname{argmin}_{\omega \in \mathcal{A}}  \JD(D \setminus \overline{\omega}), & \quad \text{where }  \JD(D \setminus \overline{\omega}) = \JD(\Omega) := \frac12 \intS{(\un - f)^2},	\label{eq:Dirichlet_cost_function}\\
	\omega^{\ast} \in \operatorname{argmin}_{\omega \in \mathcal{A}}  \JN(D \setminus \overline{\omega}), & \quad \text{where } \JN(D \setminus \overline{\omega}) := \JN(\Omega) =\frac12 \intS{({\sigma}\dn{\ud}  - g)^2} \label{eq:Neumann_cost_function}
\end{align} 
}
where $\ud := \ud(\Omega)$ and $\un := \un(\Omega)$ are the solutions to \fergy{the PDE systems \eqref{eq:state_ud} and \eqref{eq:state_un}, respectively.}
In \eqref{eq:Dirichlet_cost_function} and \eqref{eq:Neumann_cost_function}, the infimum is always taken over the set of admissible domains $\mathcal{A}$. 
We refer to $\un$ and $\ud$ as state variables or the simply states.

To ensure $\JN$ is well-defined, the state variable $\ud$ needs to be \fergy{at least $H^{2}$ regular. In this case, assuming \eqref{eq:assumptions} holds, and that $g \in H^{1/2}(\Sigma)$ and $\Omega \in C^{1,1}$ are sufficient.
Such claim can be proved by arguing as in the proof of \cite[Thm.~29]{BacaniPeichl2013}.}  
This means that in numerical experiments where the state variables might lack high regularity, using $\JN$ might not be practical.
\begin{remark}\label{eq:equivalence_of_classical_formulations}
    The optimization problems \eqref{eq:Dirichlet_cost_function} and \eqref{eq:Neumann_cost_function} are only equivalent to \eqref{eq:overdetermined} if we have a perfect match of boundary data on the known boundary, namely, $u = f$ and $\partial_n u = g$ on $\Sigma = \partial \omega$. 
    Indeed, if $\omega^{\ast} \in \mathcal{A}$ solves \eqref{eq:inverse_problem}, then $J_{i}(\Omega^{\ast}) = J_{i}(D\setminus{\overline{\omega}}^{\ast})= 0$, for $i\in \{D,N\}$, and it holds that
    \begin{equation}\label{eq:argmin}
    \omega^{\ast} \in \operatorname{argmin}_{\omega \in \mathcal{A}}J_{i}(\Omega), \quad \text{for } i\in \{D,N\}.
    \end{equation} 
    Conversely, if $\omega^{\ast} \in \mathcal{A}$ solves \eqref{eq:argmin} with $J_{i}(\Omega^{\ast}) = 0$, for $i\in \{D,N\}$, then it is a solution of \eqref{eq:inverse_problem}.
\end{remark}
In the rest of the paper, the subscript $i$ of $J_{i}$ is always understood to be either $D$ or $N$.
\subsection{Weak forms of the state systems}\label{subsec:weak_formulations} 
\correction{Throughout the paper, we let $c > 0$ be a generic constant; that is, it may take different values at different places.}\ferj{ 
Let us briefly discuss the respective variational formulation of \eqref{eq:state_un} and \eqref{eq:state_ud}.
To this end, we denote:
\[
	\HGamma:=\{\varphi \in H^{1}(\Omega) \mid \text{$\varphi = 0$ on $\Gamma$}\},
\]
which is equipped with the norm 
\[
	\norm{\varphi}_{\HGamma}^{2} = \abs{\varphi}_{H^1(\Omega)} = \norm{\nabla \varphi}_{L^{2}(\Omega)} = \intO{\abs{\nabla \varphi}^{2}},
\]
and we introduce the following bilinear form:
\begin{equation}\label{eq:bilinear_form_a}
	a({\varphi},{\psi}) = \intO{\sigma\nabla \varphi \cdot \nabla \psi}+\intO{(\bb \cdot \nabla \varphi) \psi},  
	\quad \text{where}\ {\varphi},{\psi}\in \HGamma.
\end{equation}
}
\color{black}

The weak form of \eqref{eq:state_un} reads as follows:
\fergy{
	\begin{equation}\label{eq:state_weak_form_un}
		\text{Find ${\un} \in \HGamma$\ \ such that\ \  $a(\un,\psi) = \intS{g \psi}$, \ for all $\psi \in \HGamma$}.
	\end{equation}
}	
In a Lipschitz domain $\Omega$ and with $g \in H^{-1/2}(\Sigma)$, given the conditions in \eqref{eq:assumptions} regarding the coefficients, the Lax-Milgram lemma establishes the existence of a unique weak solution $\un \in \HGamma$ for \eqref{eq:state_weak_form_un}. 
To ensure the coercivity of $a$ in $\HGamma$, it suffices to assume 
\[
	\lvert \bb \rvert_{\infty} < \correction{c}\sigma_{0}, \qquad \text{where $\lvert \bb \rvert_{\infty} = \sup \left\{  \abs{b_{i}} \mid 1 \leqslant i \leqslant d  \right\}$}, 
\]
\correction{for some constant $c > 0$.\footnote{The constant $c>0$ is, in fact, the Poincar\'{e} constant, which enables us to control the full $H^1$ norm by the $H^1$ seminorm. It appears that this point was inadvertently overlooked in \cite{CherratAfraitesRabago2025}.}}
This condition guarantees $a(\varphi,\varphi) \geqslant \left( \sigma_{0} - \correction{c}\lvert \bb \rvert_{\infty} \right) \lVert \varphi \rVert_{\HGamma}^{2} = c \lVert \varphi \rVert_{\HGamma}^{2}$, for some real number $c > 0$ \cite{CherratAfraitesRabago2025}.
%

Similarly, we may write the weak form of \eqref{eq:state_ud} as follows:
\ferj{
	\begin{equation}\label{eq:state_weak_form_ud}
		\text{Find ${\ud} \in \HGamma$, \ \ $\ud = f$ on $\Sigma$, \ \ such that\ \  $a(\ud,\psi) = 0$, \ for all $\psi \in \HGamma$}.
	\end{equation}
	}

For a Lipschitz domain $\Omega$ and $f \in H^{1/2}(\Sigma)$, assuming the conditions in \eqref{eq:assumptions} on the coefficients, the Lax-Milgram lemma again guarantees a unique weak solution $\un \in \HGamma$ for \eqref{eq:state_weak_form_un}, contingent upon the condition $\lvert \bb \rvert_{\infty} < \correction{c}\sigma_{0}$ \correction{(for some constant $c>0$)} for the well-posedness of \eqref{eq:state_weak_form_ud}.

\section{Shape Derivatives}
\label{sec:shape_derivatives}
\fergy{
To numerically solve \eqref{eq:Dirichlet_cost_function} and \eqref{eq:Neumann_cost_function}, we require the structure of $J_{i}$, $i\in \{D,N\}$, to devise a gradient-based iterative scheme for concrete problem-solving. 
These expressions will be derived in this section using the concept of \textit{shape calculus}, specifically through the notion of the \textit{velocity} or \textit{speed} method; refer to \cite{DelfourZolesio2011, HenrotPierre2018, MuratSimon1976, Simon1980, SokolowskiZolesio1992} for more details.
}
\subsection{Some elements of shape calculus}
In this section, we briefly introduce key concepts from shape calculus, focusing on material and shape derivatives of functions and functionals, and fix some notations.

Let $t \geqslant 0$. We define $T_{t} : D \longmapsto D$ as the map given by
\begin{equation}\label{eq:poi}
	T_{t} = T_{t}[{\VV}] = id + t \VV,
\end{equation}
where $\VV$ is a $t$-independent deformation field belonging to the admissible space
\begin{equation}
\label{eq:space_for_V}
	\sfTheta:=\{\VV = (\theta_{1}, \ldots, \theta_{d})^{\top} \in {{C}}^{1,1}(\overline{D})^{d} \mid \operatorname{supp}{\VV} \subset {\overline{D}}_{\delta}\},
\end{equation}
where $\{x \in {D} \mid d(x,\partial{D}) > \delta/2\} \subset {D}_{\delta} \subset \{x \in {D} \mid d(x,\partial{D}) > \delta/3\}$.
Clearly, $T_{0} = id$, and it can be shown that, \fergy{for sufficiently small $\varepsilon>0$}, $t \in [0,\varepsilon]$, $T_{t}$ is a diffeomorphism of $\mathbb{R}^{d}$.
Throughout the paper, the subscript `$t$' indicates that the associated object is defined on a domain dependent on time $t$. 
For example, $u_{t}$ represents the solution of \eqref{eq:state_ud} with $\Omega$ replaced by $\Omega_{t}=T_{t}(\Omega)[\VV]$.

We set 
\fergy{
\[
	A_{t}  := \dd_{t}(DT_{t}^{-1})(DT_{t})^{-\top},
	\qquad
	B_{t} := \dd_{t} |(DT_{t})^{-\top} \nn|, \qquad \text{and}\qquad  C_{t} := \dd_{t} (DT_{t})^{-\top},
\]
}
and assume that $\varepsilon > 0$ is sufficiently small such that for all $t \in \interval := [0,\varepsilon]$, $\dd_{t} := \det \,DT_{t} > 0$, and we can find pair of constants $\constant_{1}$, $\constant_{2}$  ($0 < \constant_{1} < \constant_{2}$), $\constant_{3}$, $\constant_{4}$ ($0 < \constant_{3} < \constant_{4}$), and $\constant_{5}$, $\constant_{6}$ ($0 < \constant_{5} < \constant_{6}$) such that (cf. \cite[Chap.~10, Sec.~2.4, Eq.~(2.32) and Eq.~(2.33), p.~526]{DelfourZolesio2011})
\begin{equation}\label{eq:bounds}
	\constant_{1} |\xi|^2 \leqslant A_{t} \xi \cdot \xi \leqslant \constant_{2} |\xi|^2,\quad
	\constant_{3} \leqslant \dd_{t} \leqslant \constant_{4},
	\quad \text{and}\quad	
	\constant_{5} \leqslant \sup_{t\in \interval} \abs{C_{t}}_{\infty} \leqslant \constant_{6}, 
	\quad \text{for all $\xi \in \mathbb{R}^{d}$.}
\end{equation}
\fergy{Note that we can choose $m_1 = \min\{\constant_1, \constant_3, \constant_5\}$ and $m_2 = \max\{\constant_2, \constant_4, \constant_6\}$ as the respective lower and upper bounds, ensuring that all the inequality conditions in \eqref{eq:bounds} are satisfied.}

\ferj{For $t \in \interval$ and $\VV \in \sfTheta$}, we see that the following regularities hold:\footnote{Here, $\varepsilon > 0$ is made smaller if necessary.}
\ferj{
\begin{equation}\label{eq:regular_maps}
\left\{\ 
\begin{aligned}
	[t \mapsto \dd_{t}] &\in {{C}}^1(\interval,{{C}^{0,1}}(\overline{D})),\qquad 
	&[t \mapsto A_{t} ] \in {{C}}^{1}(\interval,{{C}^{0,1}}(\overline{D})^{d \times d}),\\	
	[t \mapsto B_{t}] &\in {{C}}(\interval,{{C}}(\partial\Omega)),\qquad
	&[t \mapsto C_{t}] \in {{C}}^1(\interval,{{C}^{0,1}}(\overline{D})^{d \times d}).
\end{aligned}
\right.
\end{equation} 
}
Lastly, mention that we have the following derivatives for the maps given previously:
\begin{equation}\label{eq:derivative_of_regular_maps}
\left\{
\begin{aligned}
	\frac{d}{dt}\dd_{t} \big|_{t=0} 
		&= \lim_{t \searrow 0} \frac{\dd_{t} - 1}{t} = \div{\VV} =: \dd,\\
	\quad \frac{d}{dt}A_{t} \big|_{t=0} 
		& = \lim_{t \searrow 0} \frac{A_{t} - \vect{I}}{t} 
		= \dd\vect{I} -  D\VV - (D\VV)^\top =: A,\\
	\quad\frac{d}{dt}B_{t} \big|_{t=0} 
		& =  \lim_{t \searrow 0} \frac{B_{t} - 1}{t} 
		= {\operatorname{div}}_{\tau} \VV
		= \dd \big|_{\Gamma} - (D\VV\nn)\cdot\nn,\\
	\quad\frac{d}{dt}C_{t} \big|_{t=0} 
		& =  \lim_{t \searrow 0} \frac{C_{t} - 1}{t} 
		= \dd\vect{I} - (D\VV)^\top =: C,
\end{aligned}
\right.
\end{equation}
where ${\operatorname{div}}_{\tau} \VV$ denotes the tangential divergence of the vector $\VV$.
The proofs of the above results \eqref{eq:regular_maps} and \eqref{eq:derivative_of_regular_maps} are provided in \cite[Chap.~2.15, pp.~75--76, Chap.~2.18--2.19, pp.~79--85]{SokolowskiZolesio1992}.

Without further notice, the pseudo-time parameter $t$ is always assume sufficient small so that the required regularities for the (perturbed) domain is preserved and the regularity of the mappings in \eqref{eq:regular_maps} hold true.

\sloppy We say that the function $u(\Omega)$ has a \textit{material} derivative $\dot{u} = \dot{u}(\Omega)[\VV] $ and a \textit{shape} derivative $u^{\prime}= u^{\prime}(\Omega)[\VV] $ at $0$ in the direction of the vector field $\VV$ if the limits 
\[
	\dot{u} = \lim_{t \searrow 0} \frac{u^{t}(\Omega) - u(\Omega) }{t} 
	\qquad\text{and}\qquad
	u^{\prime} = \lim_{t \searrow 0} \frac{u(\Omega_{t}) - u(\Omega)}{t}
\]
exist, respectively, where $u^{t}(x) := (u(\Omega_{t}) \circ T_{t})(x) = u(\Omega_{t})(T_{t}(x))$.
Notice here that $u^{t}$ is defined on the fixed domain $\Omega$.
For sufficiently smooth $\Omega$, $u$, and $\VV$, these derivatives are related by $u^{\prime} = \dot{u} - (\nabla{u} \cdot \VV)$ \cite{DelfourZolesio2011,SokolowskiZolesio1992}. 
\ferj{Meanwhile, we say that a shape functional $j : \mathcal{A}^{\circ} \to \mathbb{R}$ has a directional Eulerian derivative at $\Omega \in \mathcal{A}^{\circ}$ in the direction of $\VV \in \sfTheta$} if the limit
\[
	\lim_{t \searrow0} \frac{j(\Omega_{t}) - j(\Omega)}{t} =: d j(\Omega)[\VV]
\]
exists \cite[Eq.~(3.6), p.~172]{DelfourZolesio2011}. 
If the map $\VV \mapsto d j(\Omega)[\VV]$ is linear and continuous \ferj{for all $\VV \in \sfTheta$}, then $j$ is \textit{shape differentiable} at $\Omega$, and the map is referred to as the \textit{shape gradient} of $j$.\\

%
%
%
\subsection{Material and shape derivatives of the state variables} 
The main objective of this section is to present a variational equation satisfied by the material derivatives of the states $\un$ and $\ud$. 
Subsequently, we deduce the shape derivative of the state corresponding to $\un$ and provide a demonstration solely for the case of $\un,$ applying the same approach for the case of $\ud$.
Throughout the rest of the paper, $\VV \in \sfTheta$, unless otherwise specified.
%
%
%
%
%
%
\begin{theorem}\label{theo:material_un}
The Neumann solution $\un \in \HGamma$ of \eqref{eq:state_un} has a derivative $\dotun \in \HGamma$ that satisfies
\begin{equation}\label{eq:material_un}
	a(\dotun,  \psi) = l(\un;\psi), \quad \forall\psi \in \HGamma,
\end{equation}
where
\begin{equation}\label{eq:right_el}
\begin{aligned}
	l(\un;\psi)
		& = -\intO{\left( \sigma A\nabla \un \cdot  \nabla \psi + C^{\top}\bb \cdot \nabla \un \psi \right)}\\
		&\qquad - \intO{ \left[  (\nabla \sigma \cdot \VV)\nabla \un  \cdot \nabla \psi + D\bb  \VV \cdot\nabla \un \psi \right]}\\
		& =: l_{0}(\un;\psi) + l_{1}(\un;\psi).
\end{aligned}
\end{equation}
\end{theorem}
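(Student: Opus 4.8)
\emph{The plan is to} transport the perturbed state back to the fixed domain $\Omega$, obtain a variational identity for the difference quotient, secure a uniform a priori bound, and then pass to the limit. First I would set $\un^{t} := \un(\Omega_{t}) \circ T_{t} \in \HGamma$ and perform the change of variables $x_{t} = T_{t}(x)$ in the weak form \eqref{eq:state_weak_form_un} posed on $\Omega_{t}$. Using $\nabla_{x_{t}}(\un(\Omega_{t})) = (DT_{t})^{-\top}\nabla \un^{t}$, $dx_{t} = \dd_{t}\,dx$, together with $\sigma_{t} := \sigma\circ T_{t}$, $\bb_{t} := \bb\circ T_{t}$, and recalling $A_{t} = \dd_{t}(DT_{t})^{-1}(DT_{t})^{-\top}$ and $C_{t}^{\top} = \dd_{t}(DT_{t})^{-1}$, one arrives at the transported problem: find $\un^{t}\in\HGamma$ with
\[
	a_{t}(\un^{t},\psi) := \intO{\sigma_{t} A_{t}\nabla \un^{t}\cdot\nabla\psi} + \intO{(C_{t}^{\top}\bb_{t}\cdot\nabla \un^{t})\psi} = \intS{g\psi},\quad \forall\psi\in\HGamma.
\]
Because $\operatorname{supp}\VV\subset\overline{D}_{\delta}$ keeps $T_{t}$ equal to the identity near $\Sigma$ (see \eqref{eq:space_for_V}), the right-hand side and the test space $\HGamma$ are left unchanged by the transport, the admissible $\psi$ may be taken $t$-independent, and at $t=0$ we recover $a_{0}=a$.

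Subtracting the identity $a(\un,\psi)=\intS{g\psi}$ and dividing by $t$ yields, for $\dot u_{t} := (\un^{t}-\un)/t$,
\[
	a_{t}(\dot u_{t},\psi) = R_{t}(\psi) := -\tfrac{1}{t}\bigl(a_{t}(\un,\psi)-a(\un,\psi)\bigr),\qquad\forall\psi\in\HGamma.
\]
The bounds \eqref{eq:bounds}, the lower bound $\sigma_{t}\geqslant\sigma_{0}$ (inherited from \eqref{eq:assumptions} since $T_{t}$ maps $\Omega$ into $D$), the Poincar\'e inequality on $\HGamma$, and the smallness of $\abs{\bb}_{\infty}$ give $a_{t}(\varphi,\varphi)\geqslant(\sigma_{0}\constant_{1}-c\,\constant_{6}\abs{\bb}_{\infty})\norm{\varphi}_{\HGamma}^{2}$, i.e.\ uniform coercivity on $I$ for $\varepsilon$ small. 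Simultaneously, the $C^{1}$-regularity \eqref{eq:regular_maps} of $t\mapsto A_{t},C_{t},\dd_{t}$ and the Lipschitz dependence of $\sigma_{t},\bb_{t}$ on $t$ show that $(\sigma_{t}A_{t}-\sigma A_{0})/t$ and $(C_{t}^{\top}\bb_{t}-\bb)/t$ are bounded in $L^{\infty}(\Omega)$ uniformly in $t$, so $\abs{R_{t}(\psi)}\leqslant c\norm{\psi}_{\HGamma}$. Testing with $\psi=\dot u_{t}$ then furnishes the uniform bound $\norm{\dot u_{t}}_{\HGamma}\leqslant c$.

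Letting $t\searrow0$ in the difference quotients and invoking \eqref{eq:derivative_of_regular_maps} together with the chain rule $\tfrac{d}{dt}\sigma_{t}\big|_{0}=\nabla\sigma\cdot\VV$ and $\tfrac{d}{dt}\bb_{t}\big|_{0}=D\bb\,\VV$, and using $A_{0}=C_{0}=\vect{I}$ and $C^{\top}=\dd\vect{I}-D\VV$, I compute
\[
	\tfrac{d}{dt}(\sigma_{t}A_{t})\big|_{t=0} = (\nabla\sigma\cdot\VV)\vect{I}+\sigma A,\qquad \tfrac{d}{dt}(C_{t}^{\top}\bb_{t})\big|_{t=0} = C^{\top}\bb+D\bb\,\VV.
\]
Substituting these into $\lim_{t\searrow0}R_{t}=R_{0}$ reproduces exactly $R_{0}(\psi)=l_{0}(\un;\psi)+l_{1}(\un;\psi)=l(\un;\psi)$ as in \eqref{eq:right_el}. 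The uniform bound then lets me extract a subsequence with $\dot u_{t}\rightharpoonup\dotun$ weakly in $\HGamma$; since $\sigma_{t}A_{t}\to\sigma A_{0}$ and $C_{t}^{\top}\bb_{t}\to\bb$ in $L^{\infty}$ while $\dot u_{t}$ stays bounded, $a_{t}(\dot u_{t},\psi)\to a(\dotun,\psi)$ and $R_{t}(\psi)\to l(\un;\psi)$, giving $a(\dotun,\psi)=l(\un;\psi)$. Coercivity of $a$ (Lax--Milgram) makes this limit unique, hence independent of the subsequence and of the whole family.

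\emph{The main obstacle} is upgrading weak convergence to the strong convergence that genuinely certifies existence of the material derivative. To handle it I would test the error equation for $e_{t}:=\dot u_{t}-\dotun$ with $e_{t}$ itself, writing
\[
	a_{t}(e_{t},e_{t}) = \bigl(R_{t}(e_{t})-R_{0}(e_{t})\bigr) + \bigl(a_{0}(\dotun,e_{t})-a_{t}(\dotun,e_{t})\bigr),
\]
where the omitted middle term $R_{0}(e_{t})-a(\dotun,e_{t})$ vanishes by the limit equation. Both brackets tend to $0$ by the $L^{\infty}$-convergence of the coefficients and the operator-norm convergence $R_{t}\to R_{0}$ against the bounded family $e_{t}$; uniform coercivity then forces $\norm{e_{t}}_{\HGamma}\to0$, so $\dot u_{t}\to\dotun$ strongly in $\HGamma$. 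The argument for the Dirichlet state $\ud$ is identical, the only modification being the inhomogeneous trace $\ud=f$ on $\Sigma$, which is treated by a fixed lifting while the difference quotients still belong to $\HGamma$ (as $T_{t}=\mathrm{id}$ near $\Sigma$), so the same variational identity and limit passage apply.
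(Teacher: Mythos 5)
Your proposal is correct, and the first two stages (transport to the fixed domain via $T_{t}$, yielding the perturbed bilinear form with coefficients $\sigma_{t}A_{t}$ and $C_{t}^{\top}\bb_{t}$, and the uniform coercivity estimate under the smallness condition on $\abs{\bb}_{\infty}$) coincide with the paper's Lemmas in Appendix~\ref{appx:differentiability_of_the_state}. Where you genuinely diverge is the final step: the paper defines $\mathcal{F}(t,\varphi)$ through the duality pairing, checks that $D_{\varphi}\mathcal{F}(0,0)$ is an isomorphism by Lax--Milgram, and invokes the implicit function theorem to conclude that $t\mapsto u^{t}-u$ is $C^{1}$, then differentiates $\mathcal{F}(t,u^{t}-u)=0$ to read off \eqref{eq:material_un}. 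You instead work directly with the difference quotients $\dot u_{t}=(u^{t}-u)/t$, derive a uniform $\HGamma$ bound, extract a weak limit, identify it via the limit equation, and then upgrade to strong convergence by testing the error equation with $e_{t}$ and using uniform coercivity. Your route is more elementary and makes explicit the strong convergence that the phrase ``has a material derivative'' really requires; the paper's IFT route buys more, namely $C^{1}$ regularity of $t\mapsto u^{t}$ on a whole neighbourhood of $0$, which is later invoked in the proof of Proposition~\ref{prop:shape_gradients} to justify Hadamard's differentiation formula — your argument would have to be rerun at each $t$ to recover that. One caveat common to both arguments: the claim that the difference quotients of $\sigma\circ T_{t}$ and $\bb\circ T_{t}$ converge in $L^{\infty}$ is delicate for coefficients that are merely $W^{1,\infty}$ (one gets $L^{p}$, $p<\infty$, convergence in general); the paper glosses over this in exactly the same way, so your proposal is at the same level of rigour.
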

We comment here that, because of the regularity assumptions $\sigma \in W^{1,\infty}(D)$, $\bb \in W^{1,\infty}(D)^{d}$, $\VV \in \sfTheta$, and $\un \in \HGamma$, the bounds for $A$ and $C$ given in \eqref{eq:bounds}, and under the condition that $\abs{\bb}_{\infty} < \sigma_{0}$, the existence of unique weak solution $\dotun \in \HGamma$ of \eqref{eq:material_un} is a consequence of the Lax-Milgram theorem.
We omit the proof detail for economy of space.

\color{black}

Meanwhile, to support our assertion in Theorem~\ref{theo:material_un}, \fergy{ we have formulated several lemmas whose proofs are outlined in Appendix} \ref{appx:differentiability_of_the_state} along with their respective proofs.

%
%
%
The next theorem presents the shape derivative of the state using the expression for the material derivative described in the previous Theorem~\ref{theo:material_un}.
\color{black}
\begin{theorem}\label{theo:form_un}
Let $\Omega \in {C}^{2,1}$ be an admissible domain, $\VV \in \sfTheta \cap {{C}}^{2,1}(\overline{D})^{d}$, and the state $\un \in H^3(\Omega) \cap \HGamma$ be sufficiently smooth.  
Then, $\un$ is shape differentiable, and its shape derivative satisfies the system:
\begin{equation}\label{eq:unp}
\left\{
\begin{array}{rcll}
    -\div{\sigma\nabla \unp}+\bb \cdot \nabla \unp & = & 0 & \text{in }\;\; \Omega,\\
    \unp & = & -\dn{\un}\Vn & \text{on }\;\Gamma,\\
    {\sigma}\dn{\unp} & = & 0 & \text{on }\;\Sigma.
\end{array}
\right.
\end{equation}
\end{theorem}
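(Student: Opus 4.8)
The plan is to derive the boundary value problem \eqref{eq:unp} directly from the variational characterization of the material derivative in Theorem~\ref{theo:material_un}, exploiting the fundamental relation $\unp = \dotun - \nabla\un\cdot\VV$ recalled above. First I would settle the existence and regularity of the shape derivative: since Theorem~\ref{theo:material_un} guarantees $\dotun \in \HGamma$, and the hypotheses $\un \in H^{3}(\Omega)$ and $\VV \in \sfTheta\cap C^{2,1}(\overline{D})^{d}$ ensure $\nabla\un\cdot\VV \in H^{2}(\Omega) \subset H^{1}(\Omega)$, the quantity $\unp := \dotun - \nabla\un\cdot\VV$ is a well-defined element of $H^{1}(\Omega)$. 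Its existence is precisely what it means for $\un$ to be shape differentiable, and since $\dotun$ depends linearly and continuously on $\VV$ through \eqref{eq:material_un}--\eqref{eq:right_el}, so does $\unp$.

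Next I would pin down the two boundary conditions. On $\Gamma$, membership $\dotun \in \HGamma$ forces the trace $\dotun = 0$; intuitively, this reflects that the pulled-back state $\un^{t}$ vanishes identically on the fixed boundary $\Gamma$ for every $t$, so its material derivative has zero trace there. Because $\un \equiv 0$ on $\Gamma$, its tangential gradient vanishes and hence $\nabla\un = \dn{\un}\,\nn$ on $\Gamma$; substituting into the relation gives
\[
\unp\big|_{\Gamma} = \dotun\big|_{\Gamma} - (\nabla\un\cdot\VV)\big|_{\Gamma} = -\,\dn{\un}\,(\nn\cdot\VV) = -\,\dn{\un}\,\Vn,
\]
which is exactly the Dirichlet datum in \eqref{eq:unp}. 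On $\Sigma$, the support condition $\operatorname{supp}\VV\subset\overline{D}_{\delta}$ means $\VV$ vanishes in a neighborhood of $\Sigma$, so $\Sigma$ and its unit normal are left fixed by $T_{t}$ and $\unp = \dotun$ there; the Neumann condition $\sigma\dn{\unp} = 0$ will instead emerge as the natural boundary condition of the variational problem below.

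For the interior equation together with the natural condition on $\Sigma$, I would substitute $\dotun = \unp + \nabla\un\cdot\VV$ into \eqref{eq:material_un} to obtain, for all $\psi \in \HGamma$,
\[
a(\unp,\psi) = l(\un;\psi) - a(\nabla\un\cdot\VV,\psi).
\]
Granting that the right-hand side vanishes, the identity $a(\unp,\psi) = 0$ for all $\psi \in \HGamma$ is exactly the weak formulation of \eqref{eq:unp}: testing against functions vanishing on $\Gamma$ and integrating by parts (legitimate under the stated $C^{2,1}$/$H^{3}$ smoothness) recovers the homogeneous interior equation $-\div{\sigma\nabla\unp}+\bb\cdot\nabla\unp = 0$ in $\Omega$ and the natural condition $\sigma\dn{\unp} = 0$ on $\Sigma$, while the essential condition on $\Gamma$ has already been fixed above. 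Elliptic regularity under the smoothness hypotheses then upgrades $\unp$ to a strong solution.

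The hard part is establishing the consistency identity $l(\un;\psi) = a(\nabla\un\cdot\VV,\psi)$ for all $\psi\in\HGamma$. This is a technical but routine computation: one expands $a(\nabla\un\cdot\VV,\psi)$ using the product rule $\nabla(\nabla\un\cdot\VV) = (D\VV)^{\top}\nabla\un + (D^{2}\un)\VV$, integrates by parts the Hessian terms, and substitutes the explicit forms of $A$ and $C$ from \eqref{eq:derivative_of_regular_maps} together with the strong form of the state equation \eqref{eq:state_un}. The $H^{3}$-regularity of $\un$ is exactly what makes the Hessian terms square-integrable and the boundary manipulations valid, the $C^{2,1}$ regularity of $\Omega$ and $\VV$ guarantees that the tangential-calculus and boundary-integral operations are justified, and the support condition on $\VV$ kills any contribution from $\Sigma$. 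An entirely analogous argument, with the Neumann datum on $\Sigma$ replaced by the Dirichlet datum and the boundary terms tracked accordingly, handles the shape derivative of $\ud$.
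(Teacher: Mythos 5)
Your proposal is correct and follows essentially the same route as the paper: write $a(\unp,\psi)=a(\dotun,\psi)-a(\nabla\un\cdot\VV,\psi)=l(\un;\psi)-a(\nabla\un\cdot\VV,\psi)$, show this vanishes for all $\psi\in\HGamma$, and read off the interior equation, the natural condition on $\Sigma$, and the essential condition $\unp=-\dn{\un}\Vn$ on $\Gamma$ from $\dotun=0$ and $\nabla\un=\dn{\un}\nn$ there. The one step you only sketch --- the identity $l(\un;\psi)=a(\nabla\un\cdot\VV,\psi)$ --- is exactly the computation the paper carries out in detail via the curl identity of Lemma~\ref{lem:curl_identity}, which reduces the combination to $-\intO{\div{\VV(\bb\cdot\nabla\un)\psi}}$ and then kills it by the divergence theorem using $\psi=0$ on $\Gamma$ and $\VV=\vect{0}$ on $\Sigma$.
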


To provide evidence for this Theorem~\ref{theo:form_un}, we will need to prove the result of Lemma~\ref{lem:curl_identity} below using the form of the curl operator $\curl$ on $\mathbb{R}^{d}$. 
We have the curl of the cross product identity
\color{black}
\begin{equation}\label{eq:prod_vect}
\begin{aligned}
    \curl(\vect{\varphi} \times \vect{\psi})
    	=\vect{\varphi}\div {\vect{\psi}}
   	 - \vect{\psi}\div{ \vect{\varphi}} 
	+ (\vect{\psi} \cdot \nabla)\vect{\varphi}
    	 - (\vect{\varphi} \cdot \nabla)\vect{\psi},
\end{aligned}
\end{equation}
for any differentiable $\mathbb{R}^{d}$-valued functions $\vect{\varphi}$ and $\vect{\psi}$.

For this purpose, we embed $\varphi$ and $\psi$ into $\mathbb{R}^{d}$ by appending zero as the third coordinate when $d=2$. However, when $d=3$, no adjustment is needed
\color{black}
and the curl of $\varphi$ in $\mathbb{R}^3$ is given by:
\begin{equation}
\nabla \times \mathbf{\varphi} = 
\left(
\frac{\partial \varphi_3}{\partial x_2} - \frac{\partial \varphi_2}{\partial x_3}, 
\frac{\partial \varphi_1}{\partial x_3} - \frac{\partial \varphi_3}{\partial x_1}, 
\frac{\partial \varphi_2}{\partial x_1} - \frac{\partial \varphi_1}{\partial x_2}
\right).
\end{equation}
\color{black}

\begin{lemma}\label{lem:curl_identity}
\fergy{
For $(u,v) \in \left[ H^{2}(\Omega) \cap \HGamma\right]^2  $ and $\VV \in \sfTheta$, we have}
\begin{equation}
    \begin{aligned}\label{eq:lemma_curl_{1}}
    	\intO{\curl (\sigma \nabla{u} \times \VV)\cdot \nabla{v}}=0,
    \end{aligned}
\end{equation}
and it holds that
\begin{equation}\label{eq:lemma_curl_{2}}
    \begin{aligned}
         &-\intO{\dd \sigma \nabla{u} \cdot \nabla{v}}
         +\intO{\sigma D\VV \nabla{u} \cdot \nabla{v} }\\
         &\qquad =
         -\intO{\div{\sigma \nabla{u}}\VV \cdot \nabla{v}}
         +\intO{\sigma \nabla^{2}{u} \VV \cdot \nabla{v}}
         +\intO{(\nabla\sigma \cdot\VV)(\nabla{u}\cdot \nabla{v})},
     \end{aligned}
\end{equation} 
where $\nabla^{2}{u}$ denotes the Hessian (matrix) of $u$.
\end{lemma}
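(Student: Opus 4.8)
The plan is to prove the two identities separately, treating the first as a consequence of a vector-calculus structure theorem and the second as an integration-by-parts computation that rewrites the tangential/Jacobian terms using the divergence theorem.

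\textbf{Identity \eqref{eq:lemma_curl_{1}}.}
First I would observe that the integrand is of the form $\curl(\vect{w}) \cdot \nabla v$ with $\vect{w} = \sigma \nabla u \times \VV$. The key structural fact is that for any sufficiently smooth vector field $\vect{w}$ and scalar $v$, one has the identity $\curl(\vect{w}) \cdot \nabla v = \div{\vect{w} \times \nabla v} + \vect{w} \cdot \curl(\nabla v)$, and since $\curl(\nabla v) = 0$, the integrand reduces to a pure divergence, $\curl(\vect{w}) \cdot \nabla v = \div{\vect{w} \times \nabla v}$. I would then apply the divergence theorem to get a boundary integral over $\partial\Omega = \Gamma \cup \Sigma$ of $(\vect{w} \times \nabla v) \cdot \nn$. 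The vanishing of this boundary term is where the hypotheses do the work: on $\Sigma$ the deformation field $\VV$ has support away from $\partial D$ by \eqref{eq:space_for_V}, so $\vect{w} = \sigma \nabla u \times \VV = 0$ there; on $\Gamma$ both $u, v \in \HGamma$ vanish, so their tangential gradients vanish and $\nabla u, \nabla v$ are both normal to $\Gamma$, forcing $(\sigma \nabla u \times \VV) \times \nabla v$ to have zero normal component (one checks $\nabla u \parallel \nn \parallel \nabla v$ makes the triple product with $\nn$ collapse). The $H^2$ regularity of $u,v$ is exactly what is needed to make $\nabla u, \nabla v$ have well-defined traces and the surface manipulations legitimate.

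\textbf{Identity \eqref{eq:lemma_curl_{2}}.}
Here I would start from the left-hand side and expand the cross-product curl via \eqref{eq:prod_vect} applied to $\vect{\varphi} = \sigma \nabla u$ and $\vect{\psi} = \VV$. Writing out the four resulting terms, $\curl(\sigma\nabla u \times \VV) = (\sigma\nabla u)\div{\VV} - \VV \div{\sigma \nabla u} + (\VV \cdot \nabla)(\sigma \nabla u) - (\sigma \nabla u \cdot \nabla)\VV$, I would dot each against $\nabla v$ and integrate. By \eqref{eq:lemma_curl_{1}} the full sum integrates to zero, so rearranging gives an identity relating the six terms. The term $(\VV\cdot\nabla)(\sigma\nabla u)$ expands by the product rule into $(\nabla\sigma \cdot \VV)\nabla u + \sigma \nabla^2 u\, \VV$, which produces the Hessian term and the $(\nabla\sigma\cdot\VV)(\nabla u\cdot\nabla v)$ term on the right; the $\div{\VV}$ term gives $-\intO{\dd\sigma\nabla u\cdot\nabla v}$ and the $(\sigma\nabla u\cdot\nabla)\VV$ term gives $+\intO{\sigma D\VV\nabla u \cdot\nabla v}$ after identifying $[(\sigma\nabla u\cdot\nabla)\VV]\cdot\nabla v = \sigma (D\VV)^\top \nabla v \cdot \nabla u$ — care with the transpose is needed to match the stated form. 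Matching all terms should reproduce \eqref{eq:lemma_curl_{2}} exactly.

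\textbf{Main obstacle.}
The delicate step is verifying that the boundary contribution in \eqref{eq:lemma_curl_{1}} truly vanishes on $\Gamma$; the divergence-reduction is formal, but the argument that $\nabla u$ and $\nabla v$ are purely normal on $\Gamma$ (because the Dirichlet condition $u=v=0$ kills the tangential derivatives) and that this makes the triple scalar product $(\sigma\nabla u\times\VV)\cdot(\nabla v\times\nn)$ or equivalently $((\sigma\nabla u \times\VV)\times\nabla v)\cdot\nn$ vanish must be made precise, tracking the geometry of the cross products. A secondary source of friction is the bookkeeping of transposes when converting the directional-derivative term $(\sigma\nabla u\cdot\nabla)\VV$ into the matrix form $\sigma D\VV\nabla u$; getting the orientation right so that the signs in \eqref{eq:lemma_curl_{2}} come out correctly is the main place where a routine computation can go astray.
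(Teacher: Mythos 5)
Your proof is correct, and identity \eqref{eq:lemma_curl_{2}} is handled exactly as in the paper: expand $\curl(\sigma\nabla u\times\VV)$ via \eqref{eq:prod_vect}, dot with $\nabla v$, integrate, invoke \eqref{eq:lemma_curl_{1}}, and split $(\VV\cdot\nabla)(\sigma\nabla u)$ by the product rule into the Hessian term plus $(\nabla\sigma\cdot\VV)\nabla u$ (this is precisely \eqref{eq:grad_sig_gradu}); your transpose bookkeeping for $(\sigma\nabla u\cdot\nabla)\VV = \sigma D\VV\nabla u$ is right. For \eqref{eq:lemma_curl_{1}} you take the dual route: the paper integrates by parts to move the derivative onto $v$, kills the volume term with $\operatorname{div}\curl = 0$, and is left with the boundary integral of $\curl(\sigma\nabla u\times\VV)\cdot v\,\nn$, which vanishes since $v=0$ on $\Gamma$ and $\VV$ vanishes identically near $\Sigma$. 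You instead write the integrand as the total divergence $\div{(\sigma\nabla u\times\VV)\times\nabla v}$ using $\curl(\nabla v)=0$, so your boundary term is $((\sigma\nabla u\times\VV)\times\nabla v)\cdot\nn$. On $\Sigma$ both arguments coincide ($\VV=\vect{0}$ there), but on $\Gamma$ your version costs a little more: you need the geometric fact that $\nabla v$ is purely normal on $\Gamma$ (valid since $v\in H^2(\Omega)\cap\HGamma$ and $\Gamma$ is $C^{1,1}$, so the tangential gradient of the vanishing trace is zero), after which the triple product $\det(\sigma\nabla u\times\VV,\nabla v,\nn)$ collapses because $\nabla v\parallel\nn$ — in fact you do not even need $\nabla u$ to be normal, only $\nabla v$. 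The paper's variant needs only the trace condition $v=0$ on $\Gamma$, so it is marginally more economical, but both are sound and the regularity hypotheses of the lemma support either.
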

\begin{proof}
Since $\div {\curl \varphi}=0 $ for all $\varphi \in \HGamma $, then by using integration by parts, we obtain
\[
\begin{aligned}
\intO{\curl(\sigma \nabla{u} \times \VV)\cdot \nabla{v}} &=-\intO{\div{\curl(\sigma \nabla{u} \times \VV)}v}
+\intdO{ \left[ \curl (\sigma \nabla{u} \times \VV) \right] \cdot {v}{\nn}} \\
&=\intG{ \left[  \curl (\sigma \nabla{u} \times \VV) \right] \cdot {v}{\nn}} 
+\intS{ \left[  \curl (\sigma \nabla{u} \times \VV) \right] \cdot {v}{\nn}} \\
&=0,
\end{aligned}
\]
%
%
%
%
\fergy{ because $v=0$ on $\Sigma$ and $\VV=0$ on 
$\Gamma$, applying \eqref{eq:prod_vect}, we obtain the following result:
\[
\curl(\sigma \nabla{u} \times \VV)\cdot \nabla{v}
	= \dd \sigma \nabla{u} \cdot \nabla{v} 
	- \div{\sigma \nabla{u}}\VV \cdot \nabla{v}
	+ \nabla (\sigma \nabla{u}) \VV \cdot \nabla{v}
 	- \sigma D\VV \nabla{u} \cdot \nabla{v}.
\]}
By integrating over $\Omega$ and knowing that 
 $\displaystyle \intO{\curl(\sigma \nabla{u} \times \VV)\cdot \nabla{v}}=0 $, we obtain
\[
 \intO{ \left( \dd \sigma \nabla{u} \cdot \nabla{v}
	- \div{\sigma \nabla{u}} \VV \cdot \nabla{v}
	+ \nabla (\sigma \nabla{u}) \VV \cdot \nabla{v}
	- \sigma D\VV \nabla{u} \cdot \nabla{v} \right )} = 0.
\]
Using the formula (see Appendix \ref{appx:proofs_of_key_identities} for a justification)
\begin{equation}\label{eq:grad_sig_gradu}
    \nabla (\sigma \nabla{u}) \VV \cdot \nabla{v} 
    = \sigma \nabla^{2} u \VV \cdot \nabla{v} + (\nabla \sigma \cdot \VV)(\nabla{u}\cdot \nabla{v}), 
\end{equation}
we deduce \eqref{eq:lemma_curl_{2}}.
\end{proof}
We now provide the proof of Theorem~\ref{theo:form_un}.
\begin{proof}[Proof of Theorem~\ref{theo:form_un}]
We drop the subscript ${\cdot}_{N}$ for convenience.
Using \eqref{eq:bilinear_form_a} and taking $\varphi=\dot{u}-\VV \cdot\nabla{u}=u^{\prime}$ and $\psi=v$, we obtain
\[
\begin{aligned}
a(u^{\prime},v) =a(\dot{u}-\VV \cdot\nabla{u},v)
&= a(\dot{u},v)
-\intO{\sigma \nabla(\VV \cdot\nabla{u})\cdot \nabla{v}}
-\intO{\bb \cdot \nabla(\VV \cdot \nabla{u})v}\\
&=:\mathbb{I}_{1}+\mathbb{I}_{2}+\mathbb{I}_{3}.
\end{aligned}
\]
We treat each term $\mathbb{I}_{i}$, $i=1,2,3$, separately.
For the first term $\mathbb{I}_{1}$, using \eqref{eq:material_un}, we get
\[
\mathbb{I}_{1} = l({{u}};v),
\]
\fergy{where $l({{u}};v) = l_{0}({{u}};v) + l_{1}({{u}};v)$ is given by \eqref{eq:right_el} with  ${{u}} = \un$ and $\psi =v$} .
Using the expressions for $A$ and $C$ given in \eqref{eq:derivative_of_regular_maps} and $l_{0}$ in \eqref{eq:right_el}, we can write 
\[
\begin{aligned}
l_{0}({{u}};v)
	& = - \intO{\dd \sigma \nabla{u} \cdot \nabla{v}}
 	+ \intO{\sigma D\VV \nabla{u} \cdot \nabla{v} }\\
 	& \qquad 
	+ \intO{\sigma D \VV^{\top}\nabla{u} \cdot \nabla{v}}
	- \intO{ \dd ( \bb \cdot \nabla {u}){v}} 
    	+ \intO{ D \VV  \bb \cdot \nabla {u}{v} }.
\end{aligned}
\]
In view of \eqref{eq:lemma_curl_{2}}, we deduce that
\[
\begin{aligned}
l_{0}({{u}};v)
& = -\intO{\div{\sigma \nabla{u}}\VV \cdot \nabla{v}} 
	+ \intO{\sigma \nabla^{2} u \VV \cdot \nabla{v}}
 	+ \intO{(\nabla\sigma \cdot\VV)(\nabla{u}\cdot \nabla{v})} \\
 	& \qquad 
	+ \intO{\sigma D \VV^{\top}\nabla{u} \cdot \nabla{v}}
	- \intO{ \dd ( \bb \cdot \nabla {u}){v}} 
    	+ \intO{ D \VV  \bb \cdot \nabla {u}{v} }.
\end{aligned}
\]
\color{black}
Moreover, recalling that 
\[
l_{1}(u;v)=- \intO{ \left[  (\nabla \sigma \cdot \VV)\nabla \un  \cdot \nabla v + D\bb  \VV \cdot\nabla \un v \right]},
\]
and replacing both $l_1(u;v)$ and $l_0(u;v)$ in $\mathbb{I}_1$, we obtain:
\color{black}
\[
\begin{aligned}
\mathbb{I}_{1}&=
 - \intO{\div{\sigma \nabla{u}}\VV \cdot \nabla{v}}
 + \intO{\sigma \nabla^{2} u \VV \cdot \nabla{v}}
 + \intO{\sigma D \VV^{\top}\nabla{u} \cdot \nabla{v}}\\
 &\qquad - \intO{ \dd ( \bb \cdot \nabla {u}){v}} 
    + \intO{ D \VV  \bb \cdot \nabla {u}{v} }
    - \intO{ \left( D\bb  \VV \cdot\nabla{u} \right) {v} }.
\end{aligned}
\]
\color{black}
%
%
For the second term $\mathbb{I}_{2}$, we have
\[
\begin{aligned}
\mathbb{I}_{2}
	= -\intO{\sigma \nabla(\VV \cdot\nabla{u})\cdot \nabla{v}}
	= -\intO{\sigma D \VV^{\top}\nabla{u} \cdot \nabla{v}}
		-\intO{\sigma \nabla^{2} u \VV \cdot \nabla{v}},
\end{aligned}
\]
while for the third term $\mathbb{I}_{3}$, we get 
\[
\begin{aligned}
\mathbb{I}_{3}=-\intO{\bb \cdot \nabla(\VV \cdot \nabla{u})v}
&=-\intO{   \bb \cdot D \VV^{\top}\nabla {u}{v} }
-\intO{\bb\cdot \nabla^{2} u \VV v}\\
&=-\intO{ D \VV  \bb \cdot \nabla {u}{v} }
-\intO{\bb\cdot \nabla^{2} u \VV v}.
\end{aligned}
\]
Adding the computed expressions for $\mathbb{I}_{1},\mathbb{I}_{2}$, and $\mathbb{I}_{3}$, we obtain
\[
\begin{aligned}
a(u^{\prime},v)
	& = -\intO{\div{\sigma \nabla{u}}\VV \cdot \nabla{v}} 
	- \intO{ \left[ \left( D\bb  \VV \cdot\nabla{u} \right) {v}  + \bb\cdot \nabla^{2} u \VV v \right]} 
	- \intO{ \dd ( \bb \cdot \nabla {u}){v} }\\
	&= - \left[ \intO{\VV \cdot (\bb\cdot \nabla{u})\nabla{v}} + \intO{\VV \cdot \nabla( \bb \cdot \nabla{u})v} \right]
	- \intO{ \dd ( \bb \cdot \nabla {u}){v} }\\
	&= - \intO{ \left\{ \VV \cdot \nabla \left[ (\bb \cdot \nabla{u})v \right] + \dd ( \bb \cdot \nabla {u}){v} \right\} }\\
	&= - \intO{\div{\VV(\bb\cdot \nabla{u})v}}, \qquad (\delta = \div{\VV}),
\end{aligned}
\]
where the second equation line follows \fergy{from \eqref{eq:state_un} and the identity
\[
	\VV \cdot \nabla( \bb \cdot \nabla{u})v=\left( D\bb  \VV \cdot\nabla{u} \right) {v} +\bb\cdot \nabla^{2} u \VV{v},
\]}
which hold in $\Omega$.
Meanwhile, the last line is a consequence of the fact that for a scalar function $\varphi$ and vector field $\vect{F}$, it holds that
\begin{equation}\label{eq:divergence_identity}
	\intO{ \left( \varphi \div{\vect{F}} + \vect{F} \cdot \nabla \varphi \right) } = \intO{\div{\varphi\vect{F}}}.
\end{equation}
Now, employing the divergence formula, noting that $v \in \HGamma$ and  $\VV = \vect{0}$ on $\Sigma$, leads to
\[
	a(u^{\prime},v)=-\intO{\div{\VV(\bb\cdot \nabla{u})v}} = - \intS{(\bb\cdot \nabla{u})v{\VV}\cdot{\nn}} - \intG{(\bb\cdot \nabla{u})v{\VV}\cdot{\nn}} = 0.
\] 
\color{black}
Using Green’s first identity, we rewrite the bilinear form $a(u^{\prime}, v)$ as follows:
\[
-\intO{\div{\sigma \nabla{u^{\prime}}}v}
+\intO{\bb\cdot \nabla{u^{\prime}} v}
-\intS{\sigma{v}\dn{u}^{\prime}}=0.
\]
\color{black}
We deduce from this equation that $u^{\prime}$ satisfies the equation  $-\div{\sigma \nabla{u^{\prime}}} +\bb\cdot \nabla{u^{\prime}} =0$ in $\Omega$, with the boundary condition 
${\sigma}\dn{u}^{\prime}=0$ on $\Sigma$.
\fergy{Lastly, since $u, \dot{u} \in \HGamma$, i.e., $u = \dot{u} = 0$ on $\Gamma$, we obtain $u^{\prime} = \dot{u} - \nabla{u} \cdot \VV = -\dn{u} \Vn$ on $\Gamma$, completing the proof of the theorem.}
\end{proof}
\begin{remark}
Using similar line of proof, it can be shown that when the state $\ud$ is sufficiently smooth, it is shape differentiable and its the shape derivative $\udp$ satisfies
\begin{equation}\label{eq:udp}
\left\{
\begin{array}{rcll}
    -\div{\sigma\nabla \udp}+\bb \cdot \nabla \udp & = & 0 & \text{in }\;\; \Omega,\\
    \udp & = & -\dn{\ud}\Vn & \text{on }\;\Gamma,\\
    {\sigma}\dn{\udp} & = & 0 & \text{on }\;\Sigma.
\end{array}
\right.
\end{equation}
\end{remark}
\begin{remark}\label{rem:higher_regularity_for_shape_derivative_of_the_state}
The regularity assumptions $\Omega \in {C}^{1,1}$, $f \in H^{3/2}(\Sigma)$ ($f \not\equiv 0$), and $g \in H^{1/2}(\Sigma)$ only allow us to obtain an $H^{2}(\Omega) \cap \HGamma$ regularity for the states.
However, this regularity of the states is not sufficient to justify the existence of their shape derivatives satisfying \eqref{eq:unp} and \eqref{eq:udp}. 
We require higher regularity of the solutions. 
Therefore, we need $\VV \in \sfTheta \cap {C}^{2,1}(\mathbb{R}^{d})$ bounded domains, $f \in H^{5/2}(\Sigma)$ ($f \not\equiv 0$), and $g \in H^{3/2}(\Sigma)$ which allows us to have $H^{3}(\Omega) \cap \HGamma$ regularity for the states.
\end{remark} 
\subsection{Shape derivatives of the shape functionals} 
We will now calculate the shape derivative of the proposed cost functions using two techniques. 
The first is based only on the variational formulation of the equation verified by the material derivative of the state variables, while the second technique uses the shape derivative of the state. 
In both cases, we introduce an adjoint state appropriate to our problem.

Let us now characterize the shape gradient of the shape functions $J_{i}$, $i\in\{D,N\}$.
\begin{proposition}[Shape gradient of $J$]
	\label{prop:shape_gradients}
	Let $\Omega$ be an admissible domain and $\VV \in \sfTheta$.
	The map $t \mapsto J_{i}(\Omega_{t})$, $i\in\{D,N\}$, is ${{C}}^1$ in a neighborhood of $0$.
	Its shape derivative at $0$ is given by $dJ_{i}(\Omega)[\VV] =\displaystyle \intG{G_{i}\nn \cdot \VV}$, where the shape gradient $G_{i}$, \ferj{i.e., the kernel of $dJ_{i}$,} $i\in\{D,N\}$, are respectively given by
	\begin{align}
		G_{{D}} &= F(\un,\pn),\label{eq:shape_gradient_gd}\\
		G_{N} &=  -F(\ud,\pd),\label{eq:shape_gradient_gn}
	\end{align}
where $F(\varphi,\psi) = {\sigma} \dn{\varphi}\dn{\psi}$, for $\varphi, \psi \in H^{2}(\Omega) \cap \HGamma$, and the adjoint variables $\pn, \pd \in \HGamma$ respectively satisfy the following \ferj{adjoint} problems:
	\begin{equation}\label{eq:adjoint_system_pn}
\left\{
\begin{array}{rcll}
        \div{\sigma\nabla \pn} + \bb \cdot \nabla \pn + \pn\operatorname{div}{\bb} & = & 0 & \text{ in $\Omega$},\\
        \pn & = & 0 & \text{on $\Gamma$},\\
        \sigma\dn{\pn} + \pn \bb \cdot \nn & = & \un-f & \text{on $\Sigma$};\\
\end{array}
\right.
\end{equation}
	\begin{equation}\label{eq:adjoint_system_pd}
\left\{
\begin{array}{rcll}
        \div{\sigma\nabla \pd}+\bb \cdot \nabla \pd + \pd\operatorname{div}{\bb} & = & 0 & \text{ in $\Omega$},\\
        \pd & = & 0 & \text{on $\Gamma$},\\
        \pd & = &  \dn{\ud}-g & \text{on $\Sigma$}.
\end{array}
\right.
\end{equation}	 
\end{proposition}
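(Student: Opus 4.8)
The plan is to exploit the fact that $\VV$ is supported in $\overline{D}_{\delta}$, so that $\Sigma = \partial D$ is \emph{fixed} under the family $T_{t}$ and the outward normal $\nn$ on $\Sigma$ does not move. Consequently the boundary integrals defining $\JD$ and $\JN$ are taken over a time-independent manifold, and their Eulerian derivatives reduce to differentiating the integrands. Using the differentiability of the state established in Theorem~\ref{theo:material_un} together with continuity of that derivative in $t$, the composite map $t \mapsto J_{i}(\Omega_{t})$ is $C^{1}$ near $0$; and since the material and shape derivatives of the state coincide on $\Sigma$ (where $\VV = \vect{0}$), one obtains
\[
	d\JD(\Omega)[\VV] = \intS{(\un - f)\,\unp}, \qquad d\JN(\Omega)[\VV] = \intS{(\sigma\dn{\ud} - g)\,\sigma\dn{\udp}},
\]
with $\unp$, $\udp$ the shape derivatives solving \eqref{eq:unp} and \eqref{eq:udp}. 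The remaining task is to eliminate the shape derivative of the state, which depends on $\VV$ only implicitly, in favour of an explicit boundary kernel on $\Gamma$.

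To this end I would introduce the adjoint $\pn$ (resp. $\pd$) via the adjoint method. The system \eqref{eq:adjoint_system_pn} is exactly the one obtained by transposing the bilinear form $a$: its interior operator is the formal adjoint of $\varphi \mapsto -\div{\sigma\nabla\varphi} + \bb\cdot\nabla\varphi$, the extra zeroth-order term $\pn\div{\bb}$ being the transpose of the convection term, while the Robin-type condition $\sigma\dn{\pn} + \pn\,\bb\cdot\nn = \un - f$ on $\Sigma$ encodes both the Neumann character of $\un$ there and the boundary contribution of the advection, with $\pn = 0$ on $\Gamma$. Concretely I would: (i) test \eqref{eq:unp} with $\pn$ and integrate by parts, using $\pn = 0$ on $\Gamma$ and $\sigma\dn{\unp} = 0$ on $\Sigma$; (ii) test \eqref{eq:adjoint_system_pn} with $\unp$ and integrate by parts; and (iii) add the two identities so that the symmetric principal part $\intO{\sigma\nabla\unp\cdot\nabla\pn}$ cancels.

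The only delicate bookkeeping is the first-order term: integrating $\intO{(\bb\cdot\nabla\unp)\pn}$ by parts and using \eqref{eq:divergence_identity} produces precisely the combination $\intO{\unp\,\bb\cdot\nabla\pn} + \intO{\unp\pn\div{\bb}}$ plus a boundary term $\intdO{\unp\pn\,\bb\cdot\nn}$, and the latter vanishes because $\unp\pn = 0$ on $\Gamma\cup\Sigma$ after invoking the respective conditions. What survives is a pairing on $\partial\Omega$ whose $\Sigma$-part, by the adjoint boundary condition, reproduces $\intS{(\un-f)\unp}$, while on $\Gamma$ one substitutes the Dirichlet trace $\unp = -\dn{\un}\,\Vn$ from \eqref{eq:unp} together with $\Vn = \VV\cdot\nn$. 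This yields
\[
	d\JD(\Omega)[\VV] = \intG{\sigma\,\dn{\un}\,\dn{\pn}\,\nn\cdot\VV} = \intG{F(\un,\pn)\,\nn\cdot\VV},
\]
i.e.\ $G_{D} = F(\un,\pn)$ as in \eqref{eq:shape_gradient_gd}. The case of $\JN$ runs in parallel with \eqref{eq:adjoint_system_pd}; there the boundary condition satisfied by $\udp$ on $\Sigma$ and the structure of $\pd$ route the surviving pairing to the opposite side, producing the sign in $G_{N} = -F(\ud,\pd)$ of \eqref{eq:shape_gradient_gn}.

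Finally, a regularity-cheaper route avoids \eqref{eq:unp}--\eqref{eq:udp} entirely: one writes $d\JD[\VV]$ through the material derivative and inserts the adjoint directly into the variational identity $a(\dotun,\psi) = l(\un;\psi)$ of Theorem~\ref{theo:material_un}, choosing $\psi = \pn$ and then testing the adjoint problem with $\dotun$, reaching the same kernel without the $H^{3}$ regularity needed to make sense of the shape-derivative systems. I expect the main obstacle to be neither the cancellation of the principal part nor the trace substitution, but the correct derivation of the non-self-adjoint adjoint problem --- in particular the zeroth-order term $\pn\div{\bb}$ and the Robin condition on $\Sigma$ --- and the orientation-consistent accounting of the advection boundary terms on $\Gamma$ and $\Sigma$; a secondary point is making the $C^{1}$-in-$t$ claim rigorous rather than formal, which rests on the continuity of the state derivative furnished by the lemmas of Appendix~\ref{appx:differentiability_of_the_state}.
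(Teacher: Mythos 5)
Your proposal follows essentially the same route as the paper's proof via the shape derivative of the state: Hadamard's formula on the fixed boundary $\Sigma$, testing \eqref{eq:unp} with $\pn$ and \eqref{eq:adjoint_system_pn} with $\unp$, cancelling the principal part, and substituting the traces $\unp = -\dn{\un}\Vn$ on $\Gamma$ and the Robin condition on $\Sigma$; your closing remark about working instead with $a(\dotun,\psi)=l(\un;\psi)$ is precisely the paper's first (material-derivative) proof, which however requires the curl identities of Lemma~\ref{eq:expressions_j} and the cancellation \eqref{eq:vanishing_term} to reduce $l(\un;\pn)$ to a $\Gamma$-integral --- more machinery than your sketch suggests.

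One step as written is wrong: you claim the advection boundary term $\intdO{\unp\pn\,\bb\cdot\nn}$ vanishes because $\unp\pn=0$ on $\Gamma\cup\Sigma$. The product vanishes only on $\Gamma$ (where $\pn=0$); on $\Sigma$ the adjoint satisfies a Robin condition and $\unp$ a homogeneous Neumann condition, so neither trace is zero and $\intS{\unp\pn\,\bb\cdot\nn}$ survives. Indeed this surviving term is the entire reason the adjoint carries the Robin condition $\sigma\dn{\pn}+\pn\,\bb\cdot\nn=\un-f$ rather than a pure Neumann one: it combines with $\intS{\sigma\dn{\pn}\,\unp}$ to produce $\intS{(\un-f)\unp}$, exactly as in the paper's identity $\intS{(\pn\bb\cdot\nn+\sigma\dn{\pn})\unp}=-\intG{\sigma\dn{\pn}\unp}$. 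Your next sentence implicitly keeps this term (you invoke the full Robin condition to recover $\intS{(\un-f)\unp}$), so the final kernel $G_D=\sigma\dn{\un}\dn{\pn}$ is reached correctly, but the intermediate claim should be corrected to: the term vanishes on $\Gamma$ only, and its $\Sigma$-part is absorbed by the adjoint boundary condition.
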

%
%

Before we prove the above proposition, let us shortly discuss the variational formulation of the adjoint systems \eqref{eq:adjoint_system_pn} and \eqref{eq:adjoint_system_pd} in the following remark.

\begin{remark}\label{rem:weak_forms_of_adjoints}
For ${\varphi},{\psi}\in \HGamma$, we let $a_{p}$ be a bilinear form on $\HGamma \times \HGamma$ defined as follows:
\ferj{
\begin{align*}
	a_{p}({\varphi},{\psi}) 
	&= \intO{\sigma\nabla \varphi \cdot \nabla \psi}
		- \intO{(\bb \cdot \nabla \varphi) \psi}
		- \intO{\varphi \, \operatorname{div}{\bb} \psi }
		+ \intS{ \varphi (\bb \cdot \nn) \psi }\\
	&= \intO{\sigma\nabla \varphi \cdot \nabla \psi} 
		+ \intO{(\bb \cdot \nabla \psi) \varphi}
	= a({\psi}, \varphi),
\end{align*}
}
where the bilinear form $a$ is given by \eqref{eq:bilinear_form_a}. 
The equivalence is a consequence \fergy{of the following identity which holds for $\bb \in W^{1,\infty}(D)^{d}$ and $\varphi, \psi \in H^{1}(\Omega)$:
        \begin{equation}\label{eq:important_identity}
        \begin{aligned}
        	\intdO{ \varphi \psi \bb \cdot \nn }
        	= \intO{ \div{\varphi \psi  \bb}}
        	&= \intO{ \left[ \varphi \psi  \operatorname{div}{\bb} + \bb \cdot \nabla (\varphi \psi  )\right]} \\
        	&= \intO{ \left[ \varphi \psi \operatorname{div}{\bb}  + (\bb \cdot \nabla \varphi) \psi  + (\bb \cdot \nabla \psi ) \varphi \right] }.
        \end{aligned}
        \end{equation}
}
The variational formulation of \eqref{eq:adjoint_system_pn} can be expressed as follows:
	\begin{equation}\label{eq:adjoint_weak_form_un}
		\text{Find ${\pn} \in \HGamma$ such that $a_{p}(\pn,\psi) = \intS{(\un-f)\psi}$, for all $\psi \in \HGamma$}.
	\end{equation}
Meanwhile, the variational formulation of \eqref{eq:adjoint_system_pd} can be stated as follows:
	\begin{equation}\label{eq:adjoint_weak_form_ud}
		\text{Find ${\pd} \in \HGamma$, $\pd = \dn{\ud} - g$ on $\Sigma$, such that $a_{p}(\pd,\psi) = 0$, for all $\psi \in \HSigma$,}
	\end{equation}
	\ferj{where $\HSigma:=\{\varphi \in H^{1}(\Omega) \mid \ \text{$\varphi = 0$ on $\Sigma$}\}$.}
\end{remark}	 
The conditions of $\Omega$ being ${C}^{1,1}$ and $f \in H^{3/2}(\Sigma)$ are sufficient to ensure that problem \eqref{eq:adjoint_system_pn} has a unique weak solution within $\HGamma$. 
Similarly, when these conditions are met, $\dn{\ud} - g \in H^{1/2}(\Sigma)$, thereby guaranteeing the existence of a weak solution for \eqref{eq:adjoint_system_pd} in $\HGamma$. 
However, it is important to mention that their validity depends on specific constraints imposed on $\bb$ and $\sigma$. 
In this case, the existence of unique weak solutions for \eqref{eq:adjoint_weak_form_un} and \eqref{eq:adjoint_weak_form_ud} can be inferred from the Lax-Milgram lemma.
It could be shown that $\pn \in H^{2}(\Omega) \cap \HGamma$ given that $\Omega \in {{C}}^{1,1}$ and $(f,g) \in H^{3/2}(\Sigma) \times H^{1/2}(\Sigma)$. 
However, $\pd \not\in H^{2}(\Omega) \cap \HGamma$, \fergy{unless} $\Omega \in {{C}}^{2,1}$ and $(f,g) \in H^{5/2}(\Sigma) \times H^{3/2}(\Sigma)$. 

%
%
%
%
%
%
%
%
%
%
%
%
%
\subsubsection{Proof of Proposition~\ref{prop:shape_gradients} via material derivative}
	Let $\Omega$ be an admissible domain and $\VV \in \sfTheta$.
	By these regularity assumptions, it can easily verified that the map $t \mapsto J_{i}(\Omega_{t})$, $i\in\{D,N\}$, is differentiable around a neighborhood of $0$.
	In fact, this follows from the fact that $[t \mapsto \dd_{t}] \in {{C}}^1(\interval,{{C}}(\overline{\Omega}))$ and $[t \mapsto u_{Dt}] , [t \mapsto u_{Nt}]  \in {{C}}^1(\interval,H^{1}(\Omega) \cap \HGamma)$. 
	So, $J_{i}$ is shape differentiable for $i\in\{D,N\}$.
	On the one hand, this implies that we can apply Hadamard's boundary differentiation formula (cf. \cite[Thm.~4.3, p.~486]{DelfourZolesio2011} or \cite{HenrotPierre2018,SokolowskiZolesio1992}) to $\JD(\Omega_{t})$ and $\JN(\Omega_{t})$ and obtain
	\[
		d\JD(\Omega)[\VV] = \intS{(\un - f) \dotun}
		\qquad\text{and}\qquad
		d\JN(\Omega)[\VV] = \intS{({\sigma}\dn{\ud} - g) \dn{\dotud}},
	\]
	respectively.
	Here, $\dotun := \dotun(\Omega)[\VV]$ and $\dotud :=\dotud(\Omega)[\VV]$ denote the material derivatives of the states.

To demonstrate the desired results, we will utilize the variational problem associated with \eqref{eq:adjoint_system_pn} and \eqref{eq:adjoint_system_pd}, as well as \eqref{eq:state_un} and \eqref{eq:state_ud}, employing an appropriate selection of test functions.
Initially, this approach will be applied to the Neumann case; subsequently, we will deduce the solution for the Dirichlet problem. 
We mention that we will omit $\cdot_{N}$ in $\un$ and $\pn$ and simply refer to these variables as $u$ and $p$, respectively, for easier notation.

\color{black}
To start, let us put $\psi=p \in \HGamma$ into the variational formulation of $\dot{\un}$ \eqref{eq:material_un}, giving us:
\color{black}
\begin{equation*}
\begin{aligned} 
		\ferj{\intO{\sigma\nabla \dot{u} \cdot \nabla{p}} 
		+ \intO{(\bb \cdot \nabla \dot{u}){p} } }
		&= a(\dot{u}, p) 
		= l(u; p),
\end{aligned}
\end{equation*}
where $l(u; p)$ is given by \eqref{eq:right_el}.
Meanwhile, if we set $\psi = \dot{u} \in \HGamma$ in \eqref{eq:adjoint_weak_form_un}, we get
\begin{equation}\label{eq:weak_form_p:dotu} 
	\ferj{\intO{\sigma\nabla{p} \cdot \nabla \dot{u}} 
		+ \intO{(\bb \cdot \nabla \dot{u}){p} }}
	= a_{p}({p},{\dot{u}}) 
	= \intS{(\un-f)\dot{u}}.
\end{equation}
We deduce that, 
\[
\begin{aligned}
	d\JD(\Omega)[\VV]
	&= {{J}}_{1} + {{J}}_{2},
\end{aligned}
\]
where
\[ 
{{J}}_{1}  
	= -\intO {(\sigma A + \nabla \sigma \cdot \VV)\nabla {\un} \cdot  \nabla {\pn}}
\quad \text{and}\quad
{{J}}_{2}  
	=-\intO{ \left[ (C^{\top}\bb + D\bb \VV) \cdot \nabla \un \right] {\pn} }.
\]

To further simplify the sum of ${{J}}_{1}$ and ${{J}}_{2}$, it will be beneficial to establish certain formulas utilizing the curl operator $\curl$ in $\mathbb{R}^{d}$, as defined in \eqref{eq:prod_vect}. 
We will start by presenting alternative expressions for ${{J}}_{1}$ and ${{J}}_{2}$ in the following lemma.
By combining these two newly derived expressions, we can arrive at an expression for $d\JD(\Omega)[\VV]$ on the unknown boundary $\Gamma$.
\begin{lemma}\label{eq:expressions_j}
For $\bb \in W^{1,\infty}(D)^{d}$, $\VV \in \sfTheta$, and $(u,p) \in [\HGamma \cap H^{2}(\Omega)]^{2}$, we have
\begin{itemize}
	\item $\displaystyle \intO{\curl(\sigma \nabla{u} \times \VV)\cdot \nabla{p}}=0~\text{and}
~\intO{\curl(\sigma \nabla{p} \times \VV)\cdot \nabla{u}}=0$;
	\item $\displaystyle \intO{\nabla(\sigma\nabla{p})\VV \cdot \nabla{u}}
			+ \intO{\sigma \nabla^{2} u \VV \cdot \nabla{p}}
			+ \intO{ {\dd}(\sigma\nabla{p} \cdot \nabla{u})}
			= \intG{ \sigma \dn{p}\dn{u} \Vn }$;
	\item $\displaystyle {{J}}_{1}=-\intO{\div{\sigma \nabla{u}}\VV \cdot \nabla{p}}
	- \intO{\div{\sigma \nabla{p}}\VV \cdot \nabla{u}}
	+ \intG{ \sigma \dn{p}\dn{u} \Vn }.
	$
	\item $\displaystyle {{J}}_{2}
	=-\intO{\curl(\bb\times \VV)\cdot\nabla{u}{p} }
	-\intO{\operatorname{div}{\bb}(\VV \cdot \nabla{u})p}$.
\end{itemize}
\end{lemma}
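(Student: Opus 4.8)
The plan is to establish the four identities in the order listed, since each relies on the ones before it. The first is immediate: it is exactly the identity \eqref{eq:lemma_curl_{1}} of Lemma~\ref{lem:curl_identity}, applied once to the pair $(u,p)$ and once with $u$ and $p$ interchanged. The hypotheses $u,p \in H^{2}(\Omega) \cap \HGamma$ and $\VV \in \sfTheta$ are precisely those of that lemma, so nothing further is needed.

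For the second identity I would begin from the vector field $(\sigma\nabla p \cdot \nabla u)\VV$ and integrate its divergence over $\Omega$. By the divergence theorem the result equals $\intdO{(\sigma\nabla p\cdot\nabla u)\,\VV\cdot\nn}$; since $\operatorname{supp}\VV \subset \overline{D}_{\delta}$ keeps $\VV$ away from $\Sigma$, only the $\Gamma$-part survives. On $\Gamma$ both $u$ and $p$ vanish, so their gradients are purely normal, $\nabla u = \dn{u}\,\nn$ and $\nabla p = \dn{p}\,\nn$, giving $\nabla p \cdot \nabla u = \dn{p}\dn{u}$ and hence the boundary term $\intG{\sigma\dn{p}\dn{u}\Vn}$. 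Expanding $\div{(\sigma\nabla p\cdot\nabla u)\VV}$ by the product rule yields $\dd(\sigma\nabla p\cdot\nabla u) + \VV\cdot\nabla(\sigma\nabla p\cdot\nabla u)$, and rewriting the second summand via \eqref{eq:grad_sig_gradu} (applied to $\sigma\nabla p$) produces exactly the two remaining volume integrals $\intO{\nabla(\sigma\nabla p)\VV\cdot\nabla u}$ and $\intO{\sigma\nabla^{2}u\,\VV\cdot\nabla p}$ on the left of the claim.

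The third identity is the core of the lemma and I expect it to be the main obstacle, purely because of bookkeeping. Substituting $A = \dd\vect{I} - D\VV - (D\VV)^{\top}$ into ${{J}}_{1}$ splits it into four volume integrals. The pair $-\intO{\dd\sigma\nabla u\cdot\nabla p}$ and $\intO{\sigma D\VV\nabla u\cdot\nabla p}$ is the left-hand side of \eqref{eq:lemma_curl_{2}} with $v=p$, so I replace it by the right-hand side; the resulting $(\nabla\sigma\cdot\VV)$ integral cancels the one already present in ${{J}}_{1}$, leaving $-\intO{\div{\sigma\nabla u}\VV\cdot\nabla p}$, the $u$-Hessian term $\intO{\sigma\nabla^{2}u\,\VV\cdot\nabla p}$, and the symmetric term $\intO{\sigma(D\VV)^{\top}\nabla u\cdot\nabla p} = \intO{\sigma D\VV\nabla p\cdot\nabla u}$. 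A second application of \eqref{eq:lemma_curl_{2}}, now with the roles of $u$ and $p$ swapped, rewrites this last term and introduces $-\intO{\div{\sigma\nabla p}\VV\cdot\nabla u}$ together with $\intO{\dd\sigma\nabla p\cdot\nabla u}$, a $p$-Hessian term, and another $(\nabla\sigma\cdot\VV)$ term. Finally I would recombine the $p$-Hessian and $(\nabla\sigma\cdot\VV)$ pieces into $\intO{\nabla(\sigma\nabla p)\VV\cdot\nabla u}$ by \eqref{eq:grad_sig_gradu}; the three surviving non-divergence integrals then coincide with the left-hand side of the second identity, which equals $\intG{\sigma\dn{p}\dn{u}\Vn}$, and the two divergence integrals give the stated form of ${{J}}_{1}$.

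The fourth identity is a direct algebraic consequence of \eqref{eq:prod_vect}. Writing $C^{\top} = \dd\vect{I} - D\VV$, the integrand of ${{J}}_{2}$ is $(\dd\bb - D\VV\,\bb + D\bb\,\VV)\cdot\nabla u$. Applying \eqref{eq:prod_vect} with $\vect{\varphi}=\bb$ and $\vect{\psi}=\VV$ gives $\curl(\bb\times\VV) = \dd\bb - (\div{\bb})\VV + D\bb\,\VV - D\VV\,\bb$, so that the integrand equals $\curl(\bb\times\VV)\cdot\nabla u + (\div{\bb})(\VV\cdot\nabla u)$; multiplying by $p$ and integrating over $\Omega$ yields the claimed decomposition of ${{J}}_{2}$. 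The only genuine difficulty in the whole lemma is tracking the many Hessian and $\nabla\sigma\cdot\VV$ terms in the third identity and confirming that they cancel or reassemble exactly into the boundary integral supplied by the second identity.
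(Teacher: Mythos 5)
Your proposal is correct and follows essentially the same route as the paper: the first two identities come from Lemma~\ref{lem:curl_identity} together with an integration by parts of the $\intO{\dd(\sigma\nabla{p}\cdot\nabla{u})}$ term (with the boundary contribution surviving only on $\Gamma$, where $\nabla u$ and $\nabla p$ are purely normal), the third is obtained by applying the curl identity \eqref{eq:lemma_curl_{2}} twice --- once as stated and once with $u$ and $p$ interchanged --- and then absorbing the leftover Hessian, $\nabla\sigma\cdot\VV$, and $\dd$ terms into the boundary integral via the second identity, and the fourth follows from expanding $C^{\top}=\dd\vect{I}-D\VV$ against \eqref{eq:prod_vect}, exactly as in the paper. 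The only quibble is cosmetic: the expansion $\VV\cdot\nabla(\sigma\nabla{p}\cdot\nabla{u})=\nabla(\sigma\nabla{p})\VV\cdot\nabla{u}+\sigma\nabla^{2}u\,\VV\cdot\nabla{p}$ is the product rule for the gradient of an inner product (using symmetry of the Hessian), not identity \eqref{eq:grad_sig_gradu} itself.
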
 
%
%
%
\begin{proof} 
The first and second formulas are consequence of Lemma~\ref{lem:curl_identity} (cf. \eqref{eq:grad_sig_gradu} and see Appendix~\ref{appx:proofs_of_key_identities}).
We will use these formulas to simplify ${{J}}_{1}$. 
Knowing that $A=\dd\vect{I} -  D\VV - (D\VV)^\top$, we first expand ${{J}}_{1}$ as follows:
\[
\begin{aligned} 
 {{J}}_{1} 
	&=- \intO{ {\dd} ( \sigma\nabla {u} \cdot \nabla {p}) } 
		+ \intO{  D \VV (\sigma \nabla {u} \cdot \nabla {p}) }
		+ \intO{  D \VV^{\top} (\sigma \nabla {u} \cdot \nabla {p})}\\
	&\qquad -\intO{(\nabla \sigma \cdot \VV)(\nabla{u} \cdot \nabla{p})}.
\end{aligned}
\]
Now, using \eqref{eq:prod_vect} with $\vect{\varphi}=\sigma \nabla{u}$ and $\vect{\psi} =\mathbf{V}$ for the first identity and $\vect{\varphi}=\sigma \nabla{p}$ and $\vect{\psi} =\mathbf{V}$ for the second one, we obtain
\[
\begin{cases}
\curl (\sigma \nabla{u} \times \VV)
={\dd} \sigma \nabla{u} - \div{\sigma \nabla{u}}\VV
+\nabla (\sigma \nabla{u})\VV -D\VV(\sigma \nabla{u}),\\
\curl (\sigma \nabla{p} \times \VV)
={\dd} \sigma \nabla{p} - \div{\sigma \nabla{p}}\VV
+\nabla (\sigma \nabla{p})\VV -D\VV(\sigma \nabla{p}).
\end{cases}
\]
Therefore, by taking the scalar product with $\nabla{p}$ in the first equation and with $\nabla{u}$ in the second one, we get
\[
\begin{cases}
\begin{aligned}
\curl (\sigma \nabla{u} \times \VV)\cdot \nabla{p}
&={\dd} \sigma \nabla{u} \cdot \nabla{p}
	- \div{\sigma \nabla{u}}\VV\cdot \nabla{p}
	+ \nabla (\sigma \nabla{u})\VV \cdot \nabla{p}
	- D\VV(\sigma \nabla{u})\cdot \nabla{p},\\
\curl (\sigma \nabla{p} \times \VV)\cdot \nabla{u}
&={\dd} \sigma \nabla{p} \cdot \nabla{u}
	- \div{\sigma \nabla{p}}\VV\cdot \nabla{u}
	+ \nabla (\sigma \nabla{p})\VV \cdot \nabla{u}
	-D\VV(\sigma \nabla{p})\cdot \nabla{u}.
\end{aligned}
\end{cases}
\]
By integrating over $\Omega$ and applying the first formula in Lemma~\ref{eq:expressions_j} and \eqref{eq:grad_sig_gradu} for $v=p$, we obtain
\begin{equation}\label{eq:1}
\begin{aligned}
0&=\intO{{\dd} \sigma \nabla{u} \cdot \nabla{p}}
 - \intO{\div{\sigma \nabla{u}}\VV\cdot \nabla{p}}
+\intO{\sigma \nabla^{2} u \VV \cdot \nabla{p}}\\
&\qquad -\intO{D\VV(\sigma \nabla{u})\cdot \nabla{p}}+
\intO{(\nabla \sigma \cdot \VV)(\nabla{u} \cdot \nabla{p})},
\end{aligned}
\end{equation}
while the second equation becomes
\begin{equation}\label{eq:2}
\begin{aligned}
0&=\intO{{\dd} \sigma \nabla{p} \cdot \nabla{u}}
 	- \intO{\div{\sigma \nabla{p}}\VV\cdot \nabla{u}}
	+ \intO{\nabla (\sigma \nabla{p})  \VV \cdot \nabla{u}}\\
&\qquad- \intO{D\VV^{\top}(\sigma \nabla{u}\cdot \nabla{p})}.
\end{aligned}
\end{equation}
By summing \eqref{eq:1} and \eqref{eq:2}, and then using the second formula of the lemma, we obtain a new expression for ${{J}}_{1}$:
\[
{{J}}_{1}
	= - \intO{\div{\sigma \nabla{u}}\VV \cdot \nabla{p}}
	- \intO{\div{\sigma \nabla{p}}\VV \cdot \nabla{u}}
	+ \intG{ \sigma \dn{p}\dn{u} \Vn }.
\]

Finally, let us obtain an equivalent expression for the integral ${{J}}_{2}$.
Knowing that $C=\dd\vect{I} - (D\VV)^\top$, we have
\[
\begin{aligned}
{{J}}_{2}
	&=-\intO{ \left( C^{\top}\bb \cdot \nabla{u} 
	+ D\bb \VV \cdot  \nabla{u} \right) p }\\
	&=-\intO{  \dd(\bb \cdot \nabla{u})p }
	+\intO{D\VV (\bb\cdot \nabla{u})p}
	- \intO{D\bb( \VV \cdot  \nabla{u}  )p }.
\end{aligned}
\]
Now, utilizing \eqref{eq:grad_sig_gradu} with $\vect{\varphi} =\bb$ and $\vect{\psi}=\VV$, we obtain
\color{black}
\[
\curl(\bb\times \VV)
	= {\dd} \bb -\operatorname{div}{\bb}\VV + D\bb\VV-D\VV \vect  {b},
\]
\color{black}
from which it easily follows that
\[
\curl(\bb\times \VV)\cdot \nabla{u} p
= {\dd} (\bb \cdot \nabla{u}) p
	- \operatorname{div}{\bb}(\VV \cdot \nabla{u}) p 
	+ D\bb(\VV\cdot \nabla{u}) p
	- D\VV (\vect  {b}\cdot \nabla{u}) p.
\]
By integrating over $\Omega$, we deduce the desired expression.
This completes the proof of the lemma.
\end{proof}
Now, to finish the proof of Proposition~\ref{prop:shape_gradients}, we simply need to utilize the identities established in the previous lemma.
Indeed, by Lemma~\ref{eq:expressions_j} together with the fact that $\div{\sigma \nabla{u}}=\bb\cdot\nabla{u}$ in $\Omega$, $d\JD(\Omega)[\VV]$ equates to
\[
\begin{aligned}
{{J}}_{1}+{{J}}_{2}
&=-\intO{\div{\sigma \nabla{u}}\VV \cdot \nabla{p}}
	-\intO{\div{\sigma \nabla{p}}\VV \cdot \nabla{u}}
	+ \intG{ \sigma \dn{p}\dn{u} \Vn }\\
&\qquad - \intO{\curl(\bb\times \VV)\cdot\nabla{u}{p} }
	-\intO{\operatorname{div}{\bb}(\VV \cdot \nabla{u})p}.
\end{aligned}
\]
At this point, we reconsider the variational formulation of the adjoint system \eqref{eq:adjoint_system_pn} with $\psi = \VV \cdot \nabla{u}$, to get
\[
-\intO{\div{\sigma \nabla{p}}\VV \cdot\nabla{u}}
-\intO{\operatorname{div}{\bb}(\VV\cdot\nabla{u})p}
=\intO{(\bb\cdot \nabla{p})(\VV\cdot \nabla{u})}.
\]
Then, evidently, after a rearrangement of the integrals
\[
\begin{aligned}
{{J}}_{1}+{{J}}_{2}
&=\intO{ \left[ (\bb\cdot \nabla{p})(\VV\cdot \nabla{u})
	- (\bb \cdot\nabla{u})(\VV \cdot \nabla{p})
	- \curl(\bb\times \VV)\cdot\nabla{u} p \right]}
	+ \intG{ \sigma \dn{p}\dn{u} \Vn }\ferj{.}
\end{aligned}
\]
However, it can be demonstrated that the first integral actually vanishes, i.e.,
\begin{equation}\label{eq:vanishing_term}
\intO{ \left[ (\bb\cdot \nabla{p})(\VV\cdot \nabla{u})
	- (\bb \cdot\nabla{u})(\VV \cdot \nabla{p})
	- \curl(\bb\times \VV)\cdot\nabla{u} p \right]}
	=0,
\end{equation}
(see Appendix \ref{appx:proofs_of_key_identities}), leading us to the desired result $d\JD(\Omega)[\VV] = {{J}}_{1}+{{J}}_{2} = \intG{ \sigma \dn{\un}\dn{\pn} \Vn }$.
\subsubsection{Proof of Proposition~\ref{prop:shape_gradients} via shape derivative of state}\label{subsec:computation_of_shape_derivative}
	Let $\Omega \in {C}^{2,1}$ be an admissible domain and $\VV \in \sfTheta \cap {{C}}^{2,1}(\overline{D})^{d}$.
	Clearly, $J_{i}$ is shape differentiable for all $i\in\{D,N\}$.
	By Hadamard's boundary differentiation formula, one obtains
	\[
		d\JD(\Omega)[\VV] = \intS{(\un - f) \un'},
	\]
	where $\unp = \dotun(\Omega)[\VV]-\VV\cdot\nabla \un$ satisfy equation \eqref{eq:unp}.
	Multiplying \eqref{eq:unp} by $p = \pn$ and then integrating over $\Omega$, we get
	\[
	\begin{aligned}
	-\intO{\div{\sigma \nabla{u^{\prime}}}p}
	+\intO{\bb\cdot \nabla{u^{\prime}}{p} }=0
	\end{aligned}
	\]
	Applying Green's formula while noting that $\dn{u}^{\prime}=0$ on $\Sigma$ and $p=0$ on $\Gamma$, we obtain
	\begin{equation}\label{eq:identity_bb}
	\begin{aligned}
	-\intO{\sigma \nabla{u^{\prime}} \cdot \nabla{p}}
	=\intO{(\bb\cdot \nabla{u^{\prime}}){p} }.
	\end{aligned}
	\end{equation}
	Let us now multiply \eqref{eq:adjoint_system_pn} by $u^{\prime} = \unp$ and then integrate over $\Omega$ to obtain
	\[
        \begin{aligned}
        \intO{ \div{\sigma \nabla{p}}u^{\prime} }
        + \intO{ \left [ ({\bb}\cdot \nabla{p})u^{\prime} + p\operatorname{div}{\bb}u^{\prime} \right] }=0.
        \end{aligned}
	\]
	Employing Green's formula and then utilizing identity \eqref{eq:identity_bb} after, we get the following equation after some rearrangements
	\begin{equation}\label{eq:first_equation_line}
        \begin{aligned}
	\intO{\left[ p u^{\prime} \operatorname{div}{\bb} + ({\bb}\cdot \nabla{p}) u^{\prime} + (\bb\cdot \nabla{u^{\prime}}) p \right]}
	+\intdO{\sigma \dn{p} u^{\prime}}=0.
        \end{aligned}
	\end{equation}
	Hence, with $p=\pn=0$ on $\Gamma$, we obtain from \eqref{eq:first_equation_line} and \eqref{eq:important_identity} (putting $\varphi = u^{\prime}$ and $\psi = p$) the equation
	\[
	\intS{({p} {\bb}\cdot {\nn}+\sigma \dn{p})u^{\prime}}
	=-\intG{\sigma \dn{p} u^{\prime}}.
	\]
	However, we know that ${p} {\bb}\cdot {\nn}+\sigma \dn{p}= u-f$ on $\Sigma$ form \eqref{eq:adjoint_system_pn} and $u^{\prime}=-\dn{u}\Vn$ on $\Gamma$ from \eqref{eq:unp}.
	Thus, we deduce that 
	\[
	d\JD(\Omega)[\VV]
	= \intS{(u-f)u^{\prime}}
	= \intG{\sigma\dn{p} \dn{u} \Vn},
	\]
	as desired. 
\section{Numerical Experiments} 
\label{sec:numerical_experiments}

To implement the proposed shape methods  numerically, we will utilize a conventional shape-gradient-based descent technique combined with a finite element method (FEM) based on our previous work \cite{CherratAfraitesRabago2025}. 
Before moving forward, we emphasize that identifying $\omega$ is the same as identifying $\Omega$ since $\Omega = D \setminus \overline{\omega}$. Therefore, defining $\omega$ also means defining $\Omega$, and vice versa.
\subsection{Conventional numerical algorithm}
\label{subsec:Numerical_Algorithm}
To compute the $k$th approximation $\Omega^{k}$, we carry out the following procedures:
\begin{description}
\setlength{\itemsep}{0.1em}
	\item[1. \it{Initilization}] Fix a number $\mu > 0$ and choose an initial shape $\Omega^{0}$.  
	\item[2. \it{Iteration}] For $k = 0, 1, 2, \ldots$, do the following:
		\begin{enumerate}
			\item[2.1] Solve the state and adjoint state systems on the current domain $\Omega^{k}$. 
			\item[2.2] Compute the vector $\VV^{k}$ in $\Omega^{k}$ according to the following problem: 
                            Find a vector $\VV \in \HSigma^{d}$ which solves the variational equation
                            \[
                            	\intO{( \nabla \VV: \nabla \vect{\varphi} + \VV\cdot \vect{\varphi} ) } 
                            		= - \intG{{G}\nn \cdot \vect{\varphi}}, \qquad  \forall \vect{\varphi} \in \HSigma^{d},
                            \]
                            where $G$ denotes the kernel of the shape gradient.

			\item[2.3] Compute the step size using the formula $t^{k} = \mu J(\Omega^{k})/|\VV^{k}|^2_{{H}^{1}(\Omega^{k})^{d}}$.
			
			\item[2.4] Update the current domain by setting $\Omega_{k+1} = (\operatorname{id}+t^{k}\VV^{k})\Omega^{k}$. 
		\end{enumerate}
	\item[3. \it{Stop Test}] Repeat the \textit{Iteration} until convergence.
\end{description}
\fergy{The algorithm above generates a sequence of approximations to the exact inclusion $\omega^{\ast}$, starting from an initial guess and employing domain variation techniques in shape optimization \cite{Doganetal2007}.
In Step~2.2, a Riesz representation of the shape gradient $G$ is computed as part of an extension-regularization strategy aimed at suppressing rapid oscillations along the free boundary.
This stabilizes the approximation process and prevents premature termination.
As a result, we obtain a \textit{Sobolev gradient} representation $\VV \in \HSigma$ of the deformation vector in the normal direction $-G\nn$, yielding a smoothed and preconditioned extension of $-G\nn$ throughout the entire domain $\Omega$.
This, in turn, allows for the deformation of the discretized computational domain by adjusting the positions of movable mesh nodes--modifying not only the boundary interface but also the interior nodes of the domain.
Further details on discrete gradient flows for shape optimization problems can be found in \cite{Doganetal2007}.

We emphasize that the domains are discretized using Delaunay triangulation. 
In our approach, the mesh nodal points serve as the design variables to approximate the exact inclusion solution. 
Alternatively, a parametrization method coupled with an adaptive technique can also be applied. Interested readers are referred to \cite{CaubetDambrineKatebTimimoun2013} for an application of this approach to related work, specifically in the context of obstacle detection in fluid flow.
}

In Step 2.3, $\mu > 0$ serves as a scaling factor, adjusted downwards to prevent the formation of inverted triangles in the mesh after the update. The determination of this step size follows an Armijo-Goldstein-like criterion for the shape optimization approach, as detailed in \cite[p.~281]{RabagoAzegami2020} where the step size is further reduced to avoid reversed triangles after the mesh update.
Essentially, the step size is dependent on the mesh size of the triangulation.
Additionally, convergence is reached when a finite number of iterations is completed.

In the subsequent numerical experiments, we will first analyze the case where $\sigma = \text{constant} > 0$, followed by the case $\sigma(x) = \sigma_{0}(x)\vect{I}$ for $x \in D$ (see subsections \ref{subsec:Numerical_Examples_2D}, \ref{subsec:ADMM_2D_numerics}, and \ref{subsec:ADMM_3D_numerics}). 
Here, $\sigma_{0}$ is a scalar function and $\vect{I}$ is the identity matrix. 
Afterwards, we will examine a case where $\sigma$ is a more general matrix; see subsection \ref{subsec:ADMM_3D_numerics_general_case}.
%
%
%
\subsection{Numerical examples in 2D}
\label{subsec:Numerical_Examples_2D} 
\fergy{For the first set of numerical examples in two} spatial dimensions, we make the following broad assumptions: the domain $D$ is the unit circle centered at the origin, $\sigma(x) = 1.1$ and $\bb(x) = (1.0 + 0.5 \sin{(\arctan(x_{2}/x_{1}))},  1.0 + 0.5 \cos{(\arctan(x_{2}/x_{1}))})^{\top}$, where $x = (x_1, x_{2}) \in D \subset \mathbb{R}^{2}$.
Additionally, the data is synthetically constructed. 
Specifically, we consider the Neumann boundary condition $g(x) = e^{x_{1}}$. 
We then compute the trace of the state solution $u$ of \eqref{eq:state_un} to extract the measurement $f = u$ on the accessible boundary $\partial \Omega$. 
To avoid \textit{inverse crimes} (see \cite[p.~154]{ColtonKress2013}) in generating the measurements, we construct the synthetic data using a different numerical scheme. 
This involves employing a larger number of discretization points and applying ${P2}$ finite element basis functions in the \textsc{FreeFem++} code \cite{Hecht2012}, compared to the inversion process. 
In the inversion procedure, all variational problems are solved using ${P1}$ finite elements, and we discretize the domain with a uniform mesh size of $h = 0.03$.

We shall test our proposed identification procedure by considering the following exact geometries for the unknown boundary $\Gamma^{\ast}$:
\begin{description}
	\item[Case 1:] $\Gamma^{\ast} = \Gamma^{\ast}_{1} := \left\{\begin{pmatrix} -0.25 + \dfrac{0.6+0.54\cos{t}+0.06\sin{2t}}{1+0.75\cos{t}}\cos{t} \\[0.75em] 0.05 + \dfrac{0.6+0.54\cos{t}+0.06\sin{2t}}{1+0.75\cos{t}}\sin{t}\end{pmatrix}, \forall t \in [0, 2\pi) \right\}$;
	\color{black}
	\item[Case 2:] $\Gamma^{\ast} = \Gamma^{\ast}_{2},$ where $\Gamma^{\ast}_{2}$ is the boundary of the domain $\omega^{\ast}=(-0.55,0.55)^{2}\setminus[0,0.55]^{2}$.	
\end{description}
\fergy{For the forward problem, the exterior boundary is discretized using $N_{ext}^{\ast} = 500$ points in both cases. 
In Case 1, the exact interior boundary is discretized with $N_{int}^{\ast} = 700$ points, while in Case 2, we use $N_{int}^{\ast} = 900$ points. 
For the inversion procedure, the exterior and interior boundaries are discretized with $N_{ext} = 120$ and $N_{int} = 100$ points, respectively, in all cases.}
\color{black}

The numerical algorithm terminates after $N$ iterations. 
\fergy{In the numerical experiments, $N$ is chosen to be large enough that the cost value has already saturated. 
For the 2D cases, we set $N = 200$ iterations for Case 1 and $N = 600$ iterations for Case 2}\footnote{The algorithm could run for extended periods, and the stopping criterion could be enhanced. However, this straightforward termination condition already yields satisfactory results based on our experience.}, and we set $\mu = 0.5$. 
\fergy{For all the 3D numerical experiments considered below, we use $N = 600$.}

We will also test our algorithm with noisy data, where the noise level $\delta$ is expressed as a percentage. 
\fergy{Specifically, we introduce noise into the measurements by setting $\ud^{\delta} = (1 + \delta)\ud^{\ast}$ and define $g|_{\Sigma} := \dn{\ud^{\delta}}$, where $\ud^{\ast}$ is the exact solution of the forward problem for a given input $f$.}
At this juncture, it is important to note that least-squares formulations of the boundary identification problem are ill-posed. 
In simpler terms, this means that even small changes in the measurements can lead to significant differences in the identified unknown boundaries. 
\fergy{Moreover, the presence of noise can destabilize the reconstruction, as it introduces irregularities along the boundary after some iterations in the procedure.}
\fergy{To ensure the smooth evolution of the free boundary during each iteration, regularization techniques are typically incorporated alongside the extension-regularization method applied to the deformation field. A common approach involves penalizing the perimeter (or surface area in three dimensions) of the free boundary through a term like $ \eta \intG{1} $, where $ \eta > 0 $ is a small regularization parameter.}

In our numerical tests, we will also examine the effect of perimeter regularization, as well as the effect of adaptive mesh refinement. 
\fergy{The mesh refinement is performed using the built-in \texttt{adaptmesh} function in FreeFEM++. 
This process refines the mesh by increasing the number of discretization points, which are adapted according to the gradient of the solution. 
This procedure can be considered a form of mesh regularization, as repeated mesh deformations may lead to a deterioration in mesh quality, resulting in overly thin or stretched/elongated elements.}
We focus on the reconstruction using \ferj{the Dirichlet-data tracking approach} given by Equation~\eqref{eq:Dirichlet_cost_function} since \ferj{the Neumann-data tracking approach} \eqref{eq:Neumann_cost_function} is less stable from our experience; see \cite{CherratAfraitesRabago2025}.


The numerical results for the test scenarios are shown in Figure~\ref{fig:figure1} and Figure~\ref{fig:figure2}. 
These figures present the identified inclusion with and without noise ($\delta = 0\%, 10\%, 15\%$). 
The thick black line outlines the object's surface, while the red line indicates the exact inclusion geometry. 
Black dotted lines represent initial guesses, and other lines show identification results at different noise levels.

Our shape optimization method achieves reasonable reconstructions, especially in noise-free cases. 
However, it accurately locates the inclusion but cannot capture its exact shape, particularly in concave regions, as expected. 
This limitation, consistent with prior findings, persists even in less regular cases, such as those violating ${C}^{1,1}$ regularity assumptions \cite{AfraitesRabago2025,CherratAfraitesRabago2025}. 
The cost histories in the figures confirm higher computed costs with measurement noise.
\begin{figure}[htp!]
\centering 
\resizebox{0.3\linewidth}{!}{\includegraphics{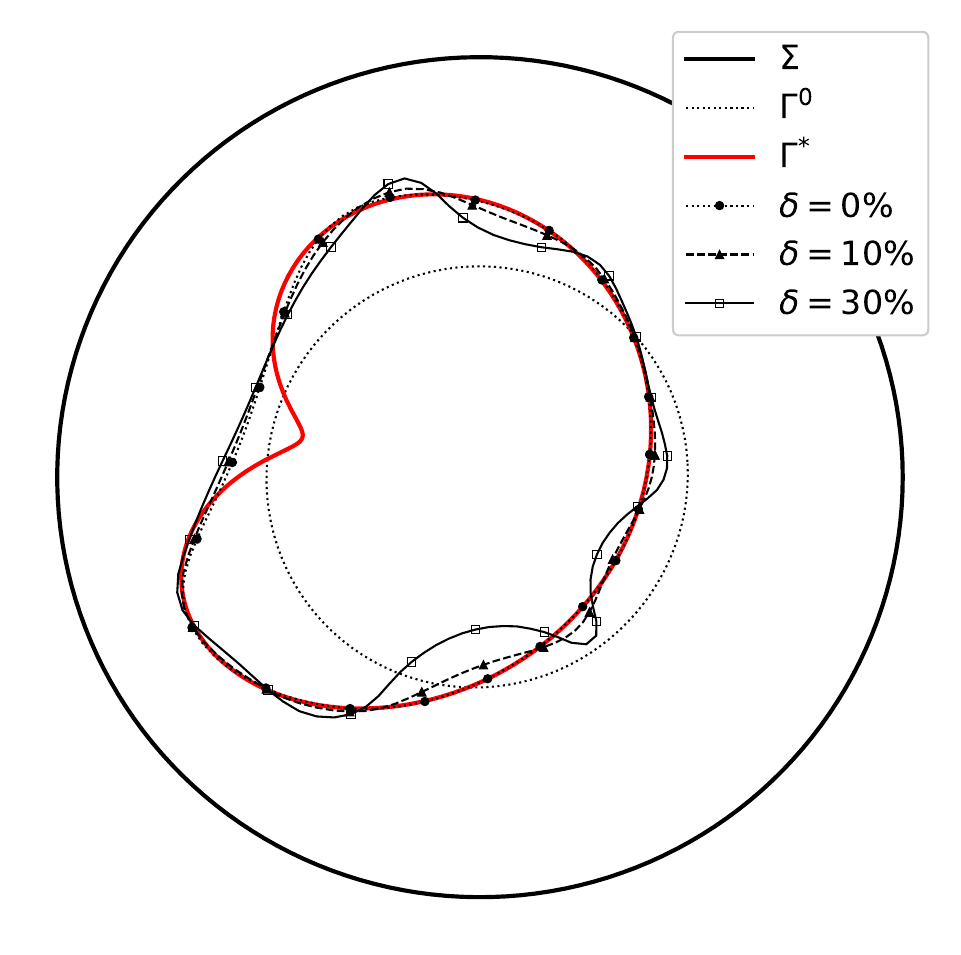}} \quad
\resizebox{0.3\linewidth}{!}{\includegraphics{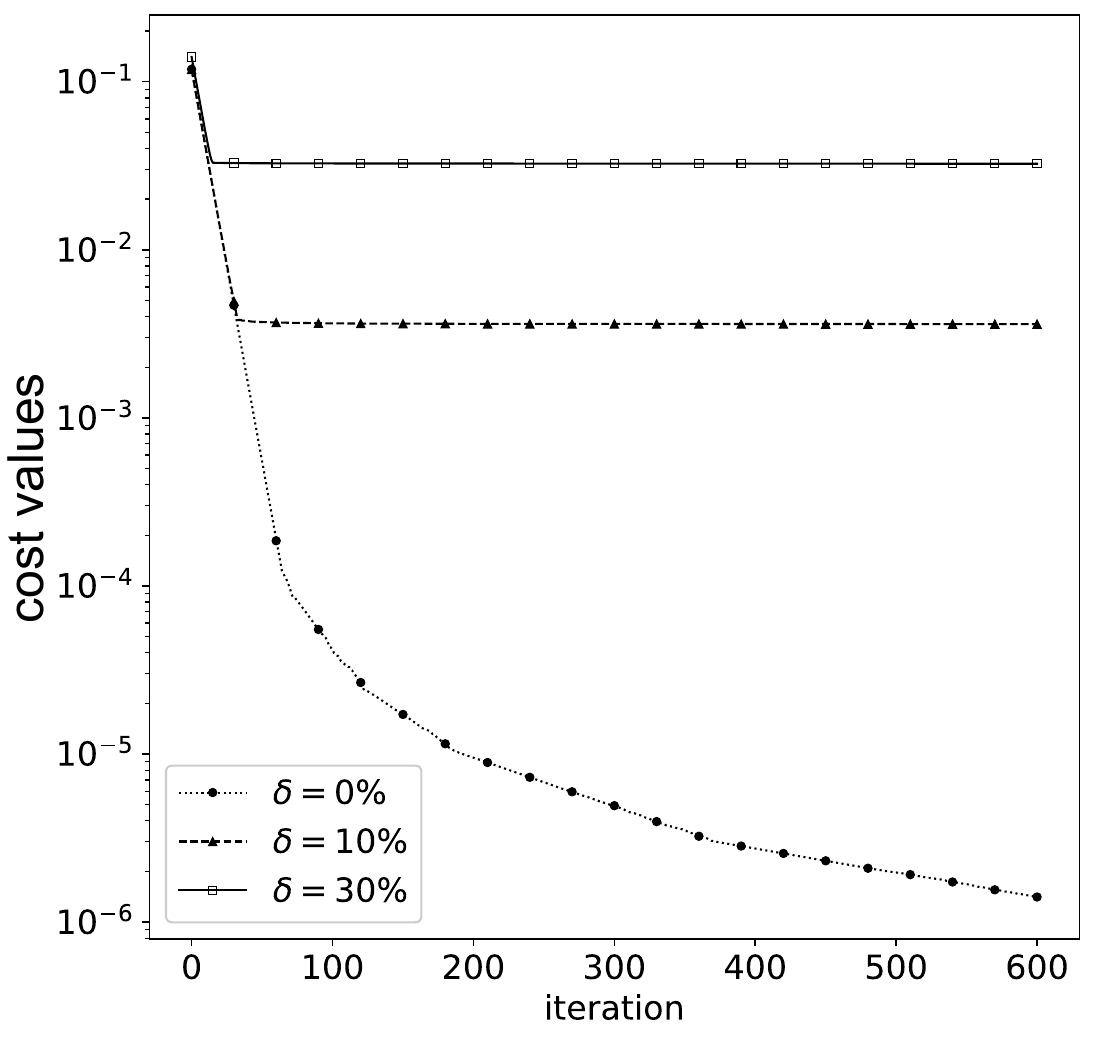}}
\caption{Reconstruction of $\Gamma^{\ast}_{1}$ in the absence and presence of noise ($\delta = 0\%, 10\%, 30\%$).
\ferj{The plot on the right shows the cost value histories for each case.}}
\label{fig:figure1}
\end{figure}
\begin{figure}[htp!]
\centering 
\resizebox{0.3\linewidth}{!}{\includegraphics{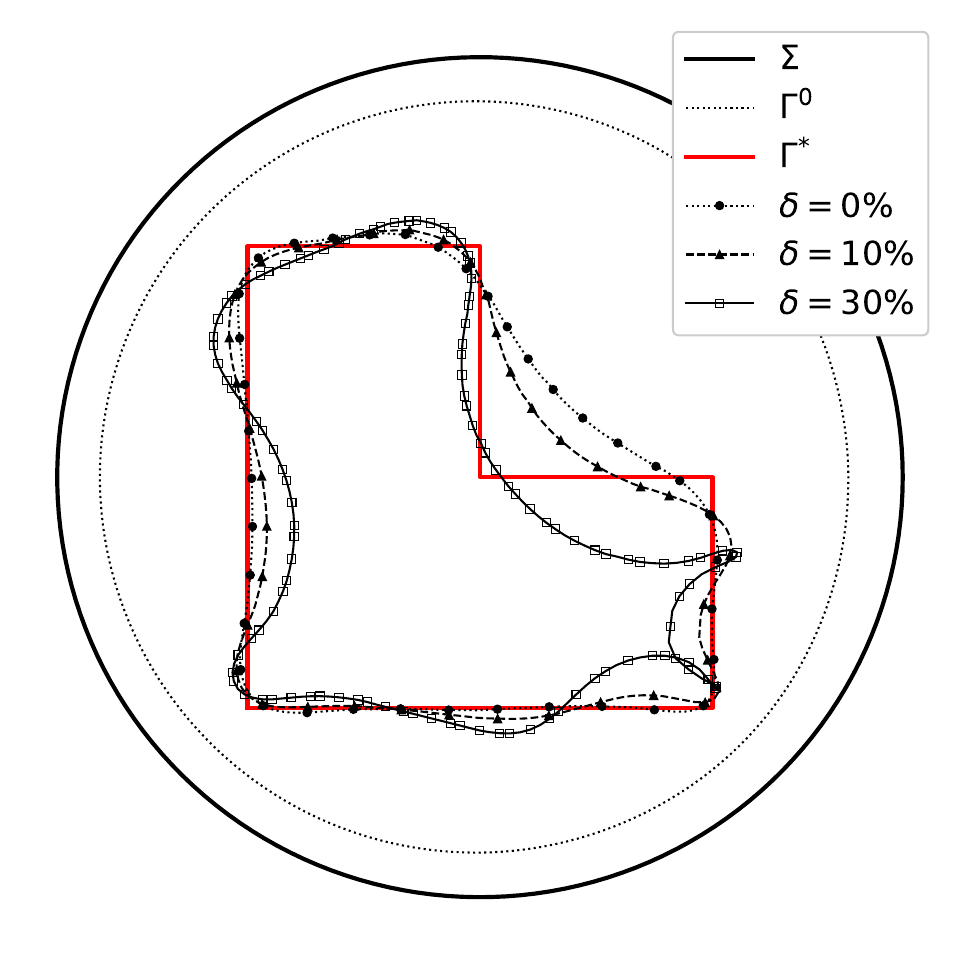}} \quad
\resizebox{0.3\linewidth}{!}{\includegraphics{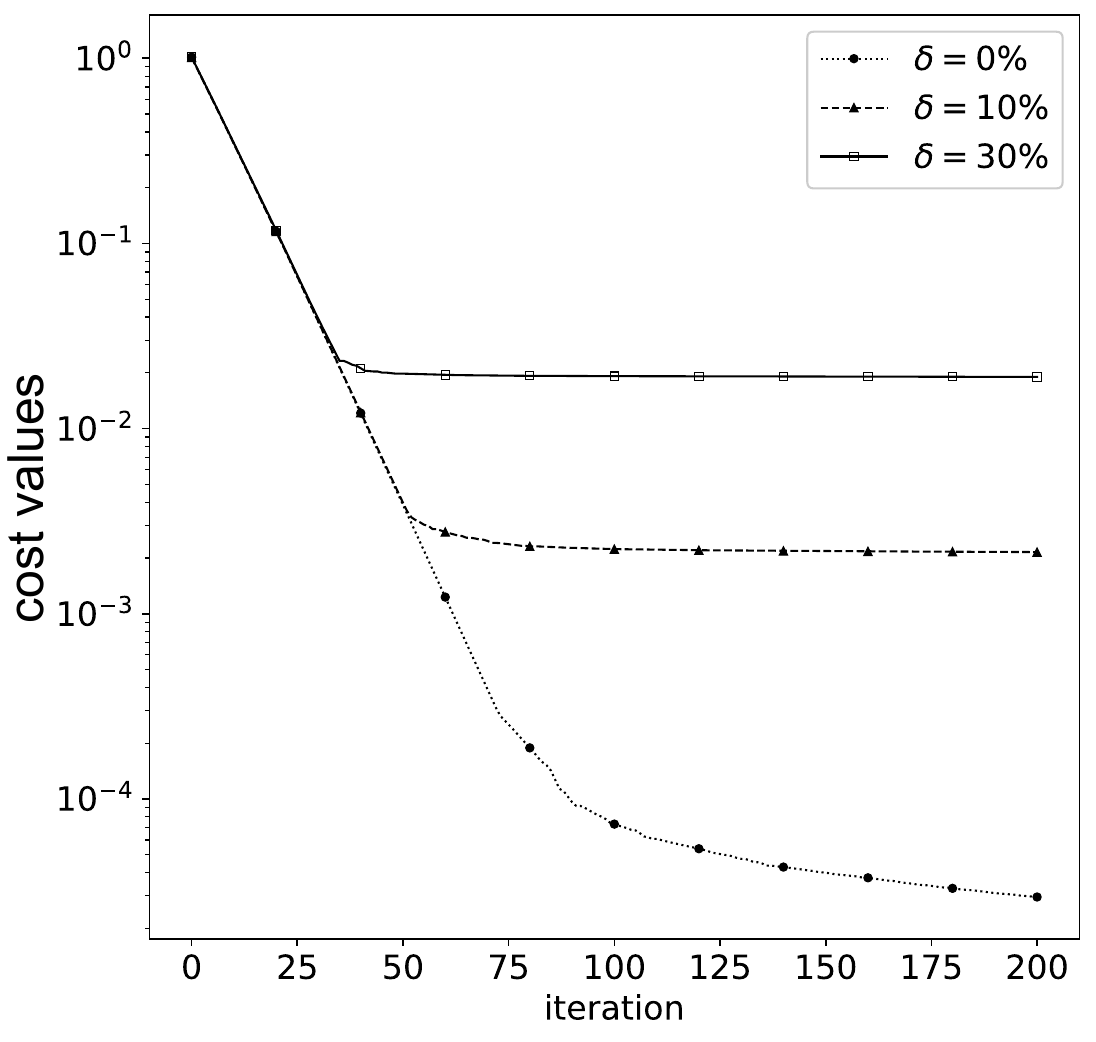}}
\caption{Reconstruction of $\Gamma^{\ast}_{2}$ in the absence and presence of noise ($\delta = 0\%, 10\%, 30\%$).
\ferj{The plot on the right shows the cost value histories for each case.}}
\label{fig:figure2}
\end{figure}

To evaluate the impact of employing perimeter regularization and adaptive mesh refinement, we conduct further tests focusing on reconstructing $\Gamma^{\ast}_{2}$. 
The summarized results, presented in Figure~\ref{fig:figure3}, compare scenarios with and without regularization (with specific weights). 
The graph also includes reconstructions with \fergy{perimeter} regularization and adaptive mesh refinement at each iteration. 
Although some improvements are observed with perimeter regularization, they are not substantial, even with adaptive remeshing. 
Additionally, while there are differences in the reconstructed shapes, the variations in cost histories are minimal. 
Despite the difficulty the method has in detecting sharp inclusion corners, the reconstructed shapes are fairly accurate representations of the actual inclusion geometry, even in the presence of significant noise.
%
%
%
%
%
%

\begin{figure}[htp!]
\centering 
\resizebox{0.3\linewidth}{!}{\includegraphics{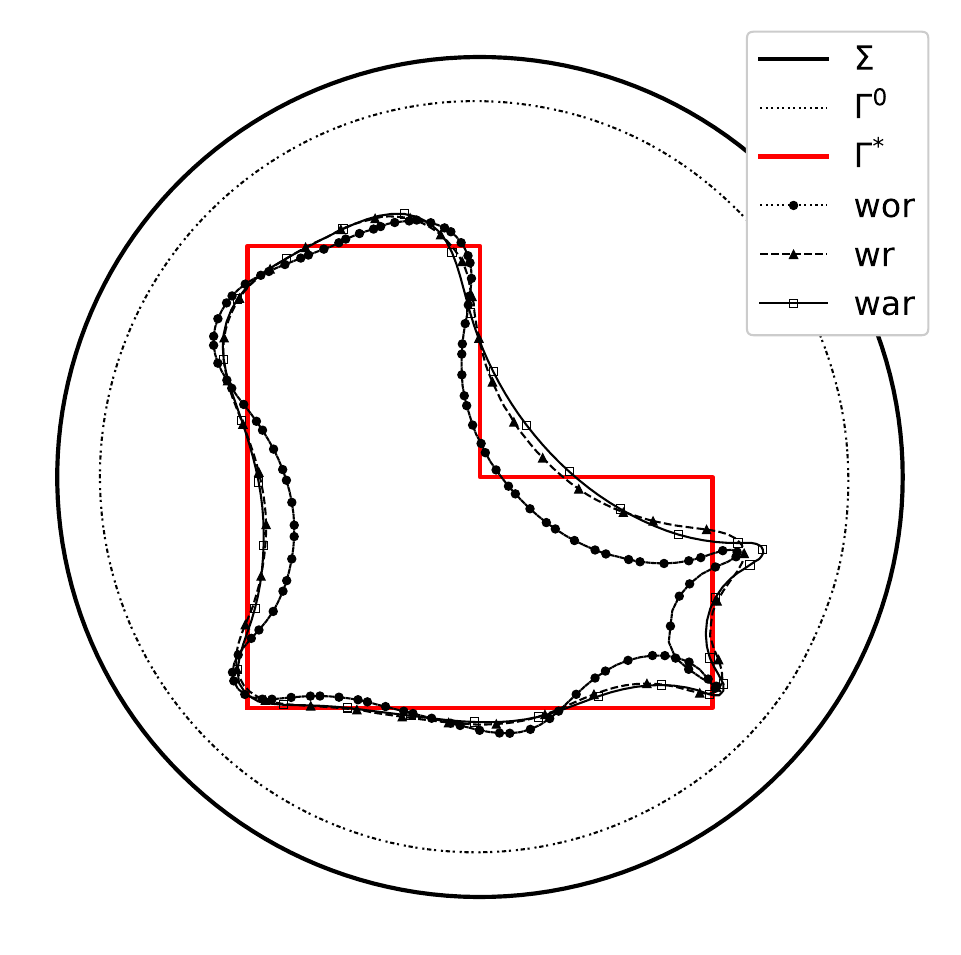}} \quad
\resizebox{0.3\linewidth}{!}{\includegraphics{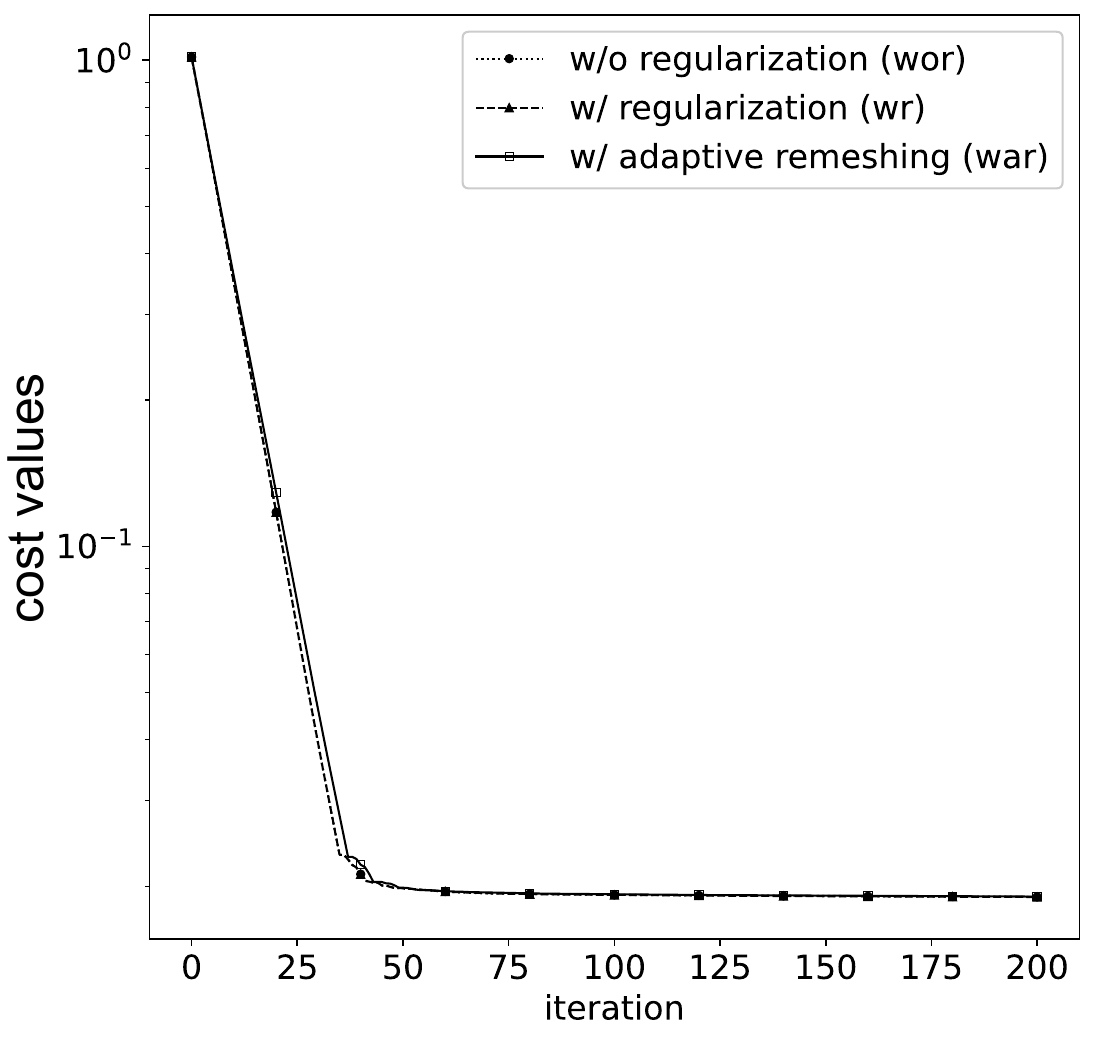}}
\caption{Reconstruction of $\Gamma^{\ast}_{2}$ with noisy data ($\delta = 30\%$) with perimeter regularization and with adaptive mesh refinement.
\ferj{The plot on the right shows the cost value histories for each case.}}
\label{fig:figure3}
\end{figure}

Finally, for the last set of numerical examples for 2D case, we set $\sigma = 2 + 0.5\sin(0.5\pi x) \cos(0.5\pi y)$, $g(t) = e^{\sin{t}}$, $t \in [0,2\pi)$, and $\beta = \lambda = 0.0001$. 
The results are shown in Figures~\ref{addfig:figure1} and \ref{addfig:figure2} using the conventional shape optimization method of tracking the Dirichlet-data in $L^{2}$ sense with perimeter regularization imposed for reconstruction with noisy data.
Notice from the plots that in the absence of noise, we obtain fair reconstructions of the obstacles, indicating the unknown shapes have concavities. 
As expected, however, the reconstruction is much less accurate at the presence of noise, and it is difficult to \fergy{indentify} correctly the concave parts of the obstacles.

\begin{figure}[htp!]
\centering 
\resizebox{0.3\linewidth}{!}{\includegraphics{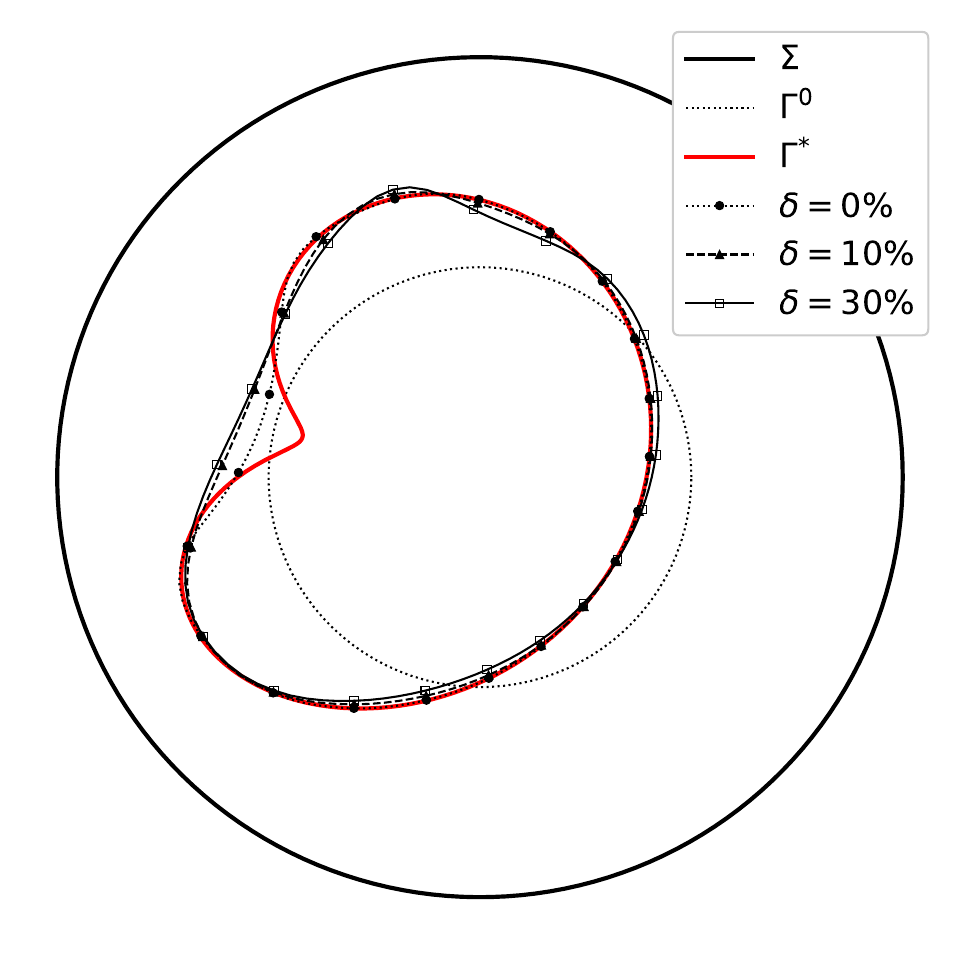}} \quad
\resizebox{0.3\linewidth}{!}{\includegraphics{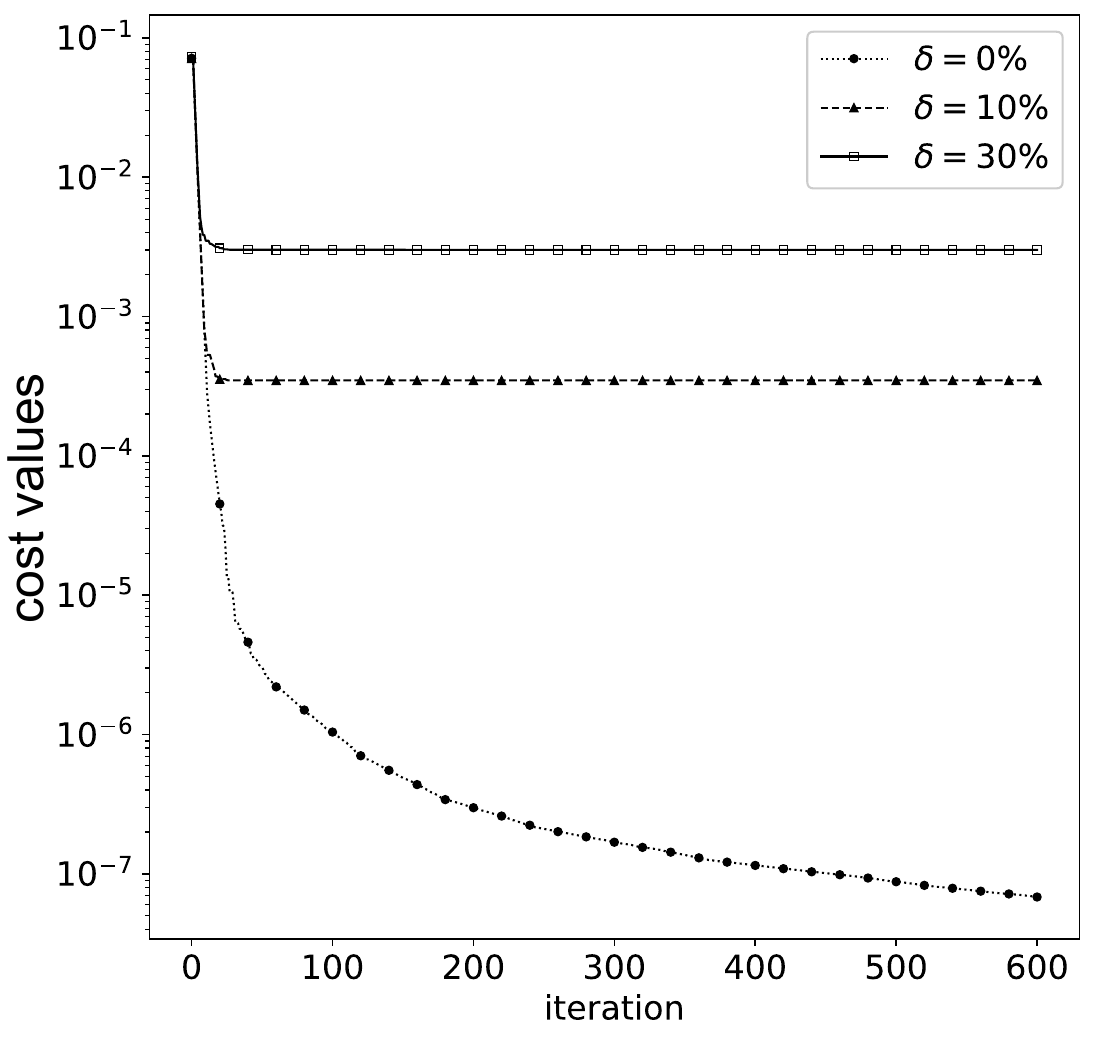}}
\caption{Reconstruction of $\Gamma^{\ast}_{1}$ with $\delta = 0\%, 10\%, 30\%$ using \eqref{eq:Dirichlet_cost_function} with perimeter regularization but without adaptive mesh refinement.
\ferj{The right plot shows the histories of cost values for each noise levels.}}
\label{addfig:figure1}
\end{figure}

\begin{figure}[htp!]
\centering 
\resizebox{0.3\linewidth}{!}{\includegraphics{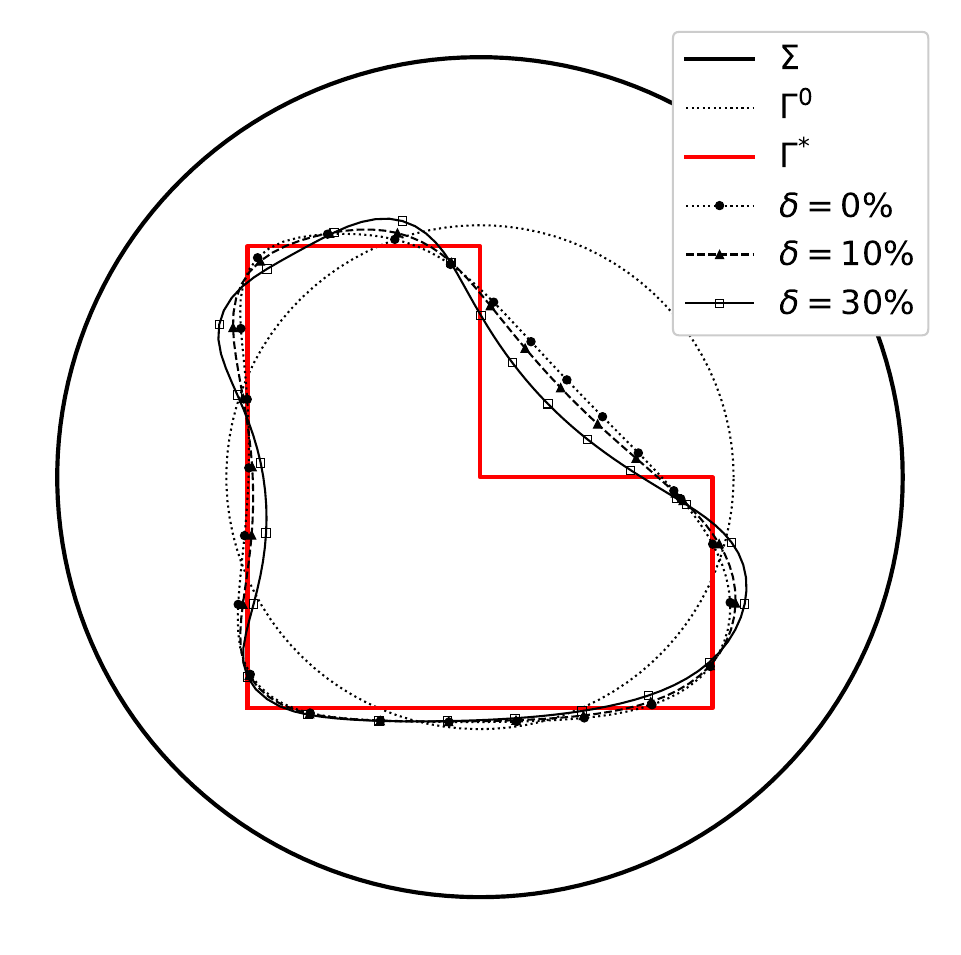}} \quad
\resizebox{0.3\linewidth}{!}{\includegraphics{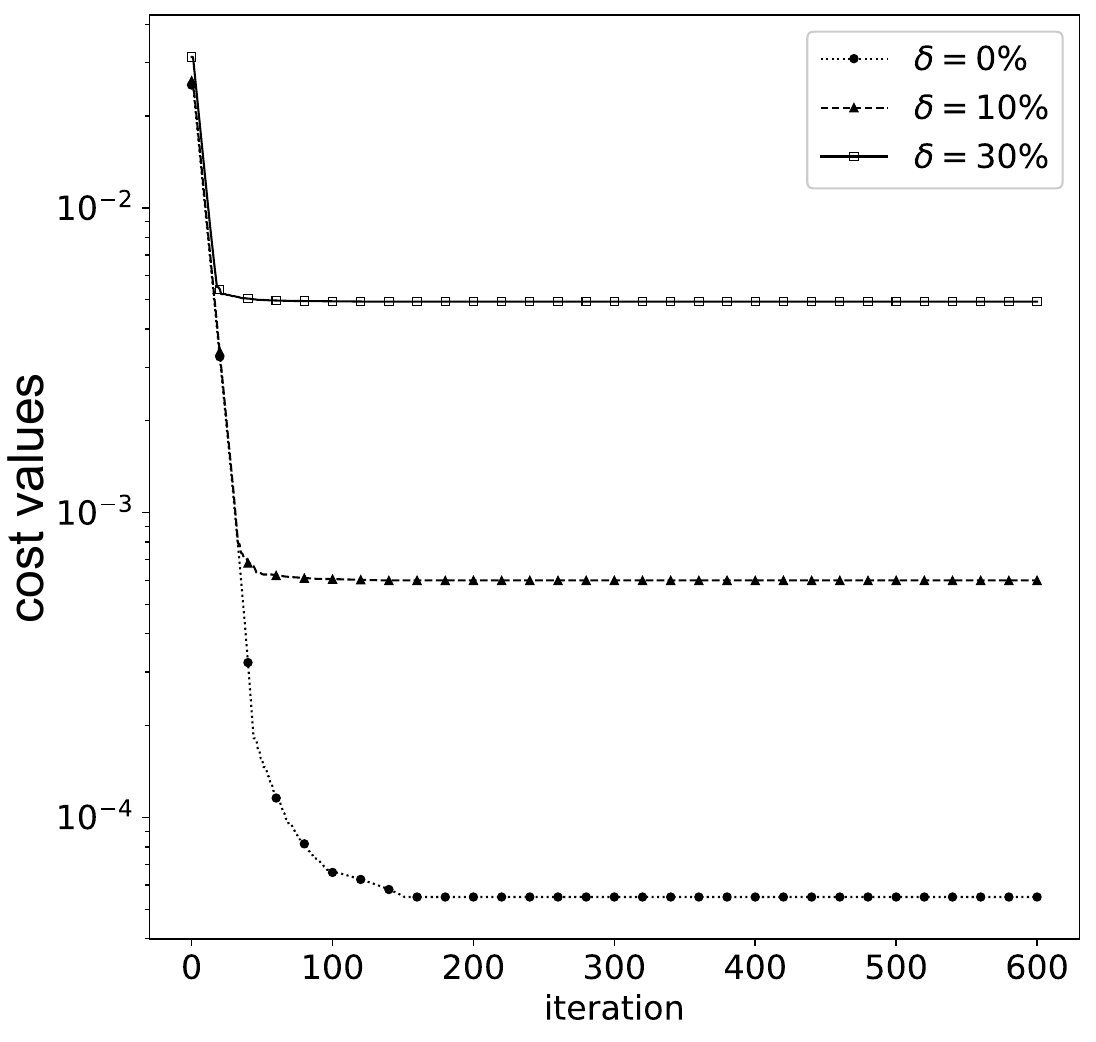}}
\caption{Reconstruction of \ferj{$\Gamma^{\ast}_{2}$} with $\delta = 0\%, 10\%, 30\%$ using \eqref{eq:Dirichlet_cost_function} with perimeter regularization but without adaptive mesh refinement.
\ferj{The right plot shows the histories of cost values for each noise levels.}}
\label{addfig:figure2}
\end{figure}

\subsection{Alternating direction method of multipliers}\label{subsec:ADMM_algorithm}
To improve detection in the case of measurements with a high level of noise, we propose \fergy{an application of the Alternating Direction Method of Multipliers (ADMM) in the context of shape optimization, originally introduced in \cite{RabagoHadriAfraitesHendyZaky2024} for the cavity identification problem governed by a simple Laplace equation}. 
The proposed method is based on the introduction of an auxiliary variable into the cost functions provided in \eqref{eq:Dirichlet_cost_function} and \eqref{eq:Neumann_cost_function}.
Since the new minimization formulation we will develop essentially involves the same modifications as applied to \eqref{eq:Dirichlet_cost_function} and \eqref{eq:Neumann_cost_function}, we shall focus solely on tracking the Dirichlet data.
We will then proceed to solve the resulting minimization problem via an alternating direction method of multipliers or ADMM developed in \cite{RabagoHadriAfraitesHendyZaky2024}. 
Hence, the \ferj{following} discussion will be based on \cite{RabagoHadriAfraitesHendyZaky2024}.

Hereinafter, we will denote $J := \JD$, and as before, $\Omega \in {C}^{2,1}$ is an admissible domain and for later use, \fergy{we assume that $\VV \in \sfTheta \cap {{C}}^{2,1}(\overline{D})^{d}$ and $g \in H^{3/2}(\Sigma)$; see Remark~\ref{rem:higher_regularity_for_shape_derivative_of_the_state}}.
Now, to start, we reformulate our original shape inverse problem \eqref{eq:inverse_problem} into the following shape optimization problem with inequality constraints.
\begin{problem}\label{prob:optimal_shape_problem}
Let $a$ and $b$, $b\geqslant a$, be given fixed constants.
Find the shape $\omega^{\ast}$ in the space of admissible set
\[
	\mathcal{O}_{ad}=\left\{ \omega \in \mathcal{A} \mid \text{$a\leqslant \uo\leqslant b$ a.e. in $\Omega$ where $\uo$ solves problem \eqref{eq:state_un}} \right\}
\]
such that
	\begin{equation}\label{eq:control} 
\omega^{\ast} 
	= \argmin_{\omega \in \mathcal{O}_{ad}} J({\Omega})
	:= \argmin_{\omega \in \mathcal{O}_{ad}} \left\{ \frac{1}{2} \intS{\abs{\uo-f}^2} \right\}.
	\end{equation}	
\end{problem}

\fergy{A comment on the choise of $a$ and $b$ is necessary.
To choose suitable values for $a$ and $b$, omne can apply the maximum principle in Sobolev spaces \cite[Thm.~8.1, p.~179]{GilbargTrudinger2001}
in the case where the data $f$ is prescribed, one can set $b = \sup_{\Sigma} f$. 
On the other hand, because $u = 0$ on $\Gamma$, then a straightforward choice is to take $a=0$.
These choices of $a$ and $b$ will be used later in our numerical experiements.}

We highlight that identifying $\omega$ is equivalent to identifying $\Omega$, given that $\Omega = D \setminus \overline{\omega}$. Thus, whenever we define $\omega$, we are simultaneously defining $\Omega$, and vice versa.
To directly incorporate the inequality constraint in the cost function, we will introduce the auxiliary variable $v$ which satisfies $v=\uo$ a.e. in ${\Omega}$ and consider the set $\mathcal{E}$ defined as follows
\[
	\mathcal{E}=\left\{ (\omega,v)\in \mathcal{O}_{ad} \times L^{2}({\Omega}) \mid \text{$\uo=v$ a.e. in ${\Omega}$} \right\}.
\]
Then, we can rewrite \eqref{eq:control} as follows  
\begin{equation}\label{eq:control_Uad}
	(\omega^{\ast},v^{\ast}) = \argmin_{(\omega,v)\in \mathcal{E}} \left\{J({\Omega})+U_{\mathcal{K}}(v)\right\},
\end{equation} 
where the set $\mathcal{K}$ is the closed convex non-empty set of $L^{2}({\Omega})$ defined by
\[
	\mathcal{K}=\left\{ v \in L^{2}({\Omega}) \mid a\leqslant v\leqslant b \ \ \text{a.e. in ${\Omega}$} \right\},
\]
while $U_{\mathcal{K}}$ is the indicator functional of the set $\mathcal{K}$; that is, $U_{\mathcal{K}}(v) = 0$ if $v \in {\mathcal{K}}$, otherwise, $U_{\mathcal{K}}(v) = \infty$ if $ v \in L^{2}({\Omega})\setminus \mathcal{K}$.

To apply ADMM to the control model $\eqref{eq:control_Uad},$ we need to define the augmented Lagrangian functional first.
This is possible since the minimum of problem $\eqref{eq:control_Uad}$ is the saddle point of the following augmented Lagrangian functional
\begin{equation}\label{eq:augmented_lagrangian}
	{L}_{\beta}(\omega,v;\lambda)=J({\Omega})+U_{\mathcal{K}}(v)+\frac{\beta}{2}  \intO{\vert \uo-v\vert^2}  +   \intO{\lambda  (\uo-v )},
 \end{equation}
where $\lambda$ is the Lagrange multiplier and $\beta>0$ is a penalty parameter.
In this work, as in \cite{RabagoHadriAfraitesHendyZaky2024}, we consider a fixed value for the penalty parameter $\beta$.
While it is possible to develop an optimization scheme for $\beta$ within our main algorithm using bilevel optimization \cite{Dempe2020}, we have opted to keep $\beta$ fixed to simplify our discussion. 
This choice consistently yields good results, as we will demonstrate further below.

Now, to find a saddle point of the Lagrangian functional ${L}_{\beta}$, we will implement an approximation procedure based on ADMM. 
Specifically, starting with initial values $v^{0}, \lambda^{0} \in L^{2}({\Omega})$, we will iteratively compute the optimizer of ${L}$ for $k = 1, 2, \ldots$ by solving the following sequence of minimization problems:
\begin{align}
	\omega^{k+1}	&= \argmin_{\omega \in \mathcal{O}_{ad}}{L}_{\beta}(\omega,v^{k};\lambda^{k}); \label{eq:controle}\tag{SP1} \\
	v^{k+1}		&= \argmin_{v \in L^{2}({\Omega})}{L}_{\beta}(\omega^{k+1},v;\lambda^{k}); \label{eq:etat}\tag{SP2}\\
	\lambda^{k+1}	&= \lambda^{k}+\beta ({\uokp}-v^{k+1} ), \label{eq:parametre1}\tag{SP3} 
\end{align}
where $\uokp:=\un(\Omega^{k+1})$.

Now, utilizing ${L}_{\beta}$ given in \eqref{eq:augmented_lagrangian}, we can outline the ADMM scheme in Algorithm \ref{algo:ADMM_algorithm}.
\begin{algorithm}
\begin{enumerate}\itemsep0.3em 
	\item \textit{Input} Fix $\beta$, $a$, and $b$, and define the Cauchy pair $(f, g)$.
	\item \textit{Initialization} Choose an initial shape $\omega_{0}$. Also, set the initial values $v^{0}$ and $\lambda^{0}$.
	\item \textit{Iteration} For $k = 1, 2, \ldots$, compute ${v^{k},\lambda^{k}}$ using equations \eqref{eq:controle}--\eqref{eq:parametre1} through sequential computations:
	\[
		\{v^{k},\lambda^{k}\} \ \stackrel{\eqref{eq:controle}}{\longrightarrow} \ \omega^{k+1} 
		\ \stackrel{\eqref{eq:etat}}{\longrightarrow} \ v^{k+1}
		\ \stackrel{\eqref{eq:parametre1}}{\longrightarrow} \ \lambda^{k+1}.\vspace{-3pt}
	\]
	\item \textit{Stop Test} Repeat \textit{Iteration} until convergence.
\end{enumerate}
\caption{ADMM algorithm for the solution of problem \eqref{eq:control}.}
\label{algo:ADMM_algorithm}  
\end{algorithm}

In the following two subsections, we will outline the resolution of \eqref{eq:controle} and \eqref{eq:etat}.
\subsubsection{Solution of $\omega$-subproblem \eqref{eq:controle}}
We first look for the solution of the first $\omega$-subproblem \eqref{eq:controle} in which we need to minimize ${L}_{\beta}$ with respect to $\omega$.
The $\omega$-subproblem \eqref{eq:controle} is given as follows
\[
 \omega^{k+1}=\argmin_{\omega \in \mathcal{O}_{ad}} \left\{ J({\Omega})+U_{\mathcal{K}}(v^{k})+\frac{\beta}{2} \intO{\vert \uo-v^{k}\vert^2} +   \intO{\lambda^{k}  (\uo-v^{k})}\right\}.
\]
Let us consider the following shape functional
\[
{{Y}}^{k}({\Omega}):={L}_{\beta}(\omega,v^{k};\lambda^{k})=J({\Omega})+\frac{\beta}{2} \intO{\vert \uo-v^{k}\vert^2}  +   \intO{\lambda^{k}  (\uo-v^{k} )},
\]
where $J({\Omega})=\displaystyle\frac{1}{2}\intO{\vert \uo-f\vert^2}$ and $\uo$ solves problem \eqref{eq:state_un} over $\Omega={D}\setminus\overline{\omega}$.

Obviously, resolving the $\omega$-subproblem \eqref{eq:controle} necessitates the shape derivative of ${{Y}}^{k}$. 
Following the computations outlined in subsection \ref{subsec:computation_of_shape_derivative}, the shape derivative of \ferj{${{Y}}^{k}$} at $\Omega$, in the direction of the vector field $\VV  \in \sfTheta \cap {{C}}^{2,1}(\overline{D})^{d}$, can be formally computed as follows:
\color{black}
\begin{equation}
\begin{aligned}\label{derivative:lagrangian}
	d{{Y}}^{k}({\Omega})[\VV]
		&= \displaystyle \intS{ \big(\uo-f\big) {\unp}}
			+ \beta  \intO{\big(\uo-v^{k} \big) {\unp}} 
			+ \frac{\beta}{2} \intO{\div{ \big(\uo-v^{k} \big)^{2} \VV}} \\
		&\qquad
			+ \intO{\lambda^{k}  {\unp}}
			+ \intO{\div{ \lambda^{k} \big(\uo-v^{k} \big) \VV}} \\
		&= \displaystyle \intS{ \big(\uo-f\big) {\unp}}
			+ \beta  \intO{\big(\uo-v^{k} \big) {\unp}} 
				+ \frac{\beta}{2} \intG{ \big(v^{k} \big)^2 \Vn } \\
		&\qquad		+ \intO{\lambda^{k}   {\unp}}
				- \intG{\lambda^{k} v^{k} \Vn },
\end{aligned}
\end{equation}
\color{black}
where $\unp$ solves \eqref{eq:unp}.
In \eqref{derivative:lagrangian}, we have used the fact that $\un = 0$ on $\Gamma$ and $\VV = \vect{0}$ on $\Sigma$.

We point out that Equation~\eqref{derivative:lagrangian} is difficult to handle since we cannot find explicitly the direction $\VV$.
In fact, the computed expression with the shape derivative ${\un}^{\prime}$ is not useful for practical applications, especially in the numerical realization of the proposed shape optimization problem via an iterative procedure.
This is because the implementation requires the solution of \eqref{eq:unp} for each velocity field $\VV$, at every iteration.
To get around this difficulty, we apply the adjoint method as in subsection \ref{subsec:computation_of_shape_derivative}. 
For this purpose, we will introduce another variable $w$ -- in order to eliminate from the gradient expression the shape derivative $\unp$ -- which solves the following adjoint problem
\begin{equation}\label{eq:adjoint_problem}
\left\{
\begin{array}{rcll}
        \div{\sigma\nabla{w}} + \bb \cdot \nabla{w} + {w}\,\operatorname{div}{\bb} & = &\beta \big(\uo-v^{k}\big)+\lambda^{k}  & \text{in $\Omega$},\\
       {w} & = & 0 & \text{on $\Gamma$},\\
        \sigma\dn{w} + w \bb \cdot \nn & = & -(\uo-f) & \text{on $\Sigma$}.
\end{array}
\right.
\end{equation}
This leads us to the following expression for the shape derivative of ${{Y}}^{k}$:
\begin{equation}\label{eq:shape_derivative_of_the_Lagrangian}
	d{{Y}}^{k}({\Omega})[\VV] 
		= \intG{ {{H}}^{k}\nn \cdot \VV}
		= \intG{ \left( - \sigma\dn{w} \dn{\un} + \frac{\beta}{2} \big( v^{k} \big)^{2} - \lambda^{k} v^{k} \right) \vect{n} \cdot \VV},
\end{equation}
In practice, the computed shape derivative $-{{H}}\nn$ is not directly used in numerical procedures as a descent direction because it can cause boundary oscillations during the approximation process, leading to algorithmic instabilities. 
To mitigate this problem, we use the Sobolev-gradient method \cite{Neuberger1997} (refer to the algorithm in subsection \ref{subsec:Numerical_Algorithm}), which we will discuss next.
\subsubsection{Extension and regularization of the deformation field}
The shape gradient of $Y$, similar to $J$, is only supported on $\Gamma$ and may lack the necessary smoothness for numerical implementation, especially when using finite element methods (FEMs). 
To improve the regularity of the descent direction ${{H}}$ (omitting $k$ for simplicity) and extend its definition across the entire domain ${\Omega}$, we will utilize its $H^{1}$ Riesz representative of ${{H}}$ as done in subsection \ref{subsec:Numerical_Algorithm}.
We can then formulate a Sobolev gradient-based descent (SGBD) algorithm laid out in Algorithm \ref{algo:SGBD_algorithm} to solve \eqref{eq:controle}. 
\begin{algorithm} 
\caption{SGBD algorithm for $\omega$-subproblem \eqref{eq:controle}}\label{algo:SGBD_algorithm} 
\begin{enumerate}
	\item \textit{Input} Fix $\beta$, $a$,  $b$, and {$\varepsilon$} and set $\lambda^{k}$, $\mu$, $\Omega^{k}_{m}=\Omega^{k}$,  $u^{k}_{m}=u^{k}$, $v^{k}_{m}=v^{k}$. Also, set $m=0$. 	
	\item \textit{Iteration} For $m = 1, 2, \ldots$,
	\begin{enumerate}\itemsep0.3em  
		\item[2.1] solve \eqref{eq:state_un} and \eqref{eq:adjoint_problem} over the current domain $\Omega = \Omega^{k}_{m}$;
		\item[2.2] set $\VV_{m}^{k} = \VV $ where $\VV \in \HSigma^{d}$, $\Omega = \Omega^{k}_{m}$, solves the variational equation
		\[
		\intO{( \nabla \VV: \nabla \vect{\varphi} + \VV\cdot \vect{\varphi} ) } 
		= - \intG{{H}\nn \cdot \vect{\varphi}}, \qquad  \forall \vect{\varphi} \in H_{\Sigma,0}^1(\Omega)^{d}.
		\]
		\item[2.3] for some scalar $t^{k}=\mu J^{k}({\Omega}_{m}^{k})/\|\VV^{k}\|_{H^{1}({\Omega}_{m}^{k})^{d}}$, set $\Omega_{m+1}^{k}:=\left\{x+t^{k} \VV_{m}^{k}(x) \mid x\in \Omega^{k}_{m}\right\}$.
	\end{enumerate}
	\item \textit{Stop test} Repeat \textit{Iteration} until convergence; i.e., \textbf{while} $\Vert  d{{Y}}^{k}(\Omega_{m}^{k})[\VV^{k}_{m}] \Vert \geqslant \varepsilon$ \textbf{do} \textit{Iteration} 
	\item \textit{Output} $\Omega^{k+1}=\Omega_{m+1}^{k}$.
\end{enumerate}
\end{algorithm}

In Step 2.3 of Algorithm \ref{algo:SGBD_algorithm}, we initialize the step size $t^{k}$ using the formula $t^{0} = \mu J({\Omega}^{0})/\|\VV^{0}\|_{H^{1}({\Omega}^{0})^{d}}$, where $\mu = 0.5$. 
We maintain this step size in later iterations but adjust it to prevent inverted triangles (or tetrahedrons) within the mesh after each update. 
Alternatively, we could employ a backtracking procedure, starting with the initial step size $t^{k} = \mu J({\Omega}^{k})/\|\VV^{k}\|_{H^{1}({\Omega}^{k})^{d}}$ (where $\mu > 0$ is sufficiently small), based on a line search method for shape optimization as in \cite[p.~281]{RabagoAzegami2020}. 
However, the previously mentioned step size choice is more effective for reconstructing the unknown obstacle. 
\subsubsection{Solution of the $v$-subproblem \eqref{eq:etat}}
Now we turn our attention to the resolution of $v$-subproblem \eqref{eq:etat} by minimizing ${L}_{\beta}$ with respect to $v$.
That is, we solve the $v$-subproblem $\eqref{eq:etat}$ given by 
\[
\begin{aligned}
  v^{k+1}&=\argmin_{v \in L^{2}({\Omega})}
  	\Big\{  J(\Omega^{k+1})+U_{\mathcal{K}}(v)
		+ \frac{\beta}{2}  \intO{\vert {\uokp}-v \vert^2} 
		+ \intO{\lambda^{k}  ( {\uokp}-v )} 
	\Big\}.
  \end{aligned}
\]
Applying the projection method, we obtain
\[
	v^{k+1}=P_{\mathcal{K}}\big({\uokp}+ \lambda^{k}/\beta \big),
\]
where $P_{\mathcal{K}}(\varphi) := \max(a, \min(b, \varphi))$, for all $\varphi \in L^{2}(\fergy{\Omega})$ is the projection operator onto the admissible set $\mathcal{K}$.
\subsubsection{ADMM-SGBD algorithm}
Finally, based on the discussions above, we can now propose a modification of Algorithm \ref{algo:ADMM_algorithm} for the numerical solution of the constrained shape optimal control problem \eqref{eq:control} with an inequality constraint subject to \eqref{eq:state_un}.
More precisely, Algorithm \ref{algo:ADMM_algorithm} can be specified as a nested iterative ADMM-SGBD scheme for the optimal control problem \eqref{eq:control_Uad} following the instructions given in Algorithm \ref{algo:ADMM-SGBD}. 

\begin{algorithm}[!htp] \caption{ADMM-SGBD}\label{algo:ADMM-SGBD}
\begin{enumerate} \itemsep0.1em 
    \item \textit{Initialization} Specify $(f,g)$, and choose $\omega^{0}$, $\lambda^{0}$, $\beta$, $a$, $b$, $v^{0}$, $\mu$, and $\varepsilon$.
    \item \textit{Iteration} For $k=0,\ldots,N$,
    \begin{enumerate}\itemsep0.1em
   	 \item[2.1] compute ${\uok}$  solution of the state \eqref{eq:state_un} associated to $\omega^{k}$;
   	 \item[2.2] compute ${w}^{k}$ solution of the adjoint state \eqref{eq:adjoint_problem}; 
   	 \item[2.3] update $\omega^{k+1}$ by the gradient-descent method in Algorithm \ref{algo:SGBD_algorithm};
   	 \item[2.4] update $v^{k+1}$ as $\displaystyle v^{k+1}=\max \left( a, \min \left({\uokp}+\lambda^{k}/\beta, b \right) \right)$;
   	 \item[2.5] set $\lambda^{k+1}=\lambda^{k}+\beta ({\uokp}-v^{k+1} )$.
	\end{enumerate}
	\item \textit{Stop test} Repeat \textit{Iteration} until convergence.
\end{enumerate}
\end{algorithm}

\begin{remark}
The techniques and algorithms described earlier can be readily adapted for scenarios involving noisy data. 
When considering the addition of a regularization term, whether the data is affected by noise or not (such as through perimeter or volume regularization), these terms are integrated into the Lagrangian functional. 
This modification entails including the corresponding shape derivatives in \eqref{derivative:lagrangian}.
\end{remark}
\subsection{Numerical experiments in 2D via ADMM with space-dependent diffusion coefficient} \label{subsec:ADMM_2D_numerics}
We now demonstrate the effectiveness of the proposed ADMM scheme and its advantage over the conventional algorithm discussed in subsection \ref{subsec:Numerical_Algorithm}. 
To do so, we first replicate the numerical experiments detailed in subsection \ref{subsec:Numerical_Examples_2D} using noisy data with a noise level of $\delta = 30\%$ where $\sigma \equiv 1.1$, then after that we consider the same test case in the latter part of subsection \ref{subsec:Numerical_Examples_2D} where the diffusion coefficient $\sigma$ is non-constant.
The results of the reconstruction using Algorithm \ref{algo:ADMM-SGBD} without any additional regularization term in the Lagrangian functional are depicted in Figures~\ref{fig:figure4}, showcasing two different initial obstacle guesses. 
The reconstructions notably outperform those shown in Figures~\ref{fig:figure2} and \ref{fig:figure3}, exhibiting increased accuracy and reduced oscillations compared to the conventional approach. 
Additionally, Figures~\ref{fig:figure4} present the histories of the cost functions and gradient norms for further insight.
%
%
%
%
\begin{figure}[htp!]
\centering 
\resizebox{0.235\linewidth}{!}{\includegraphics{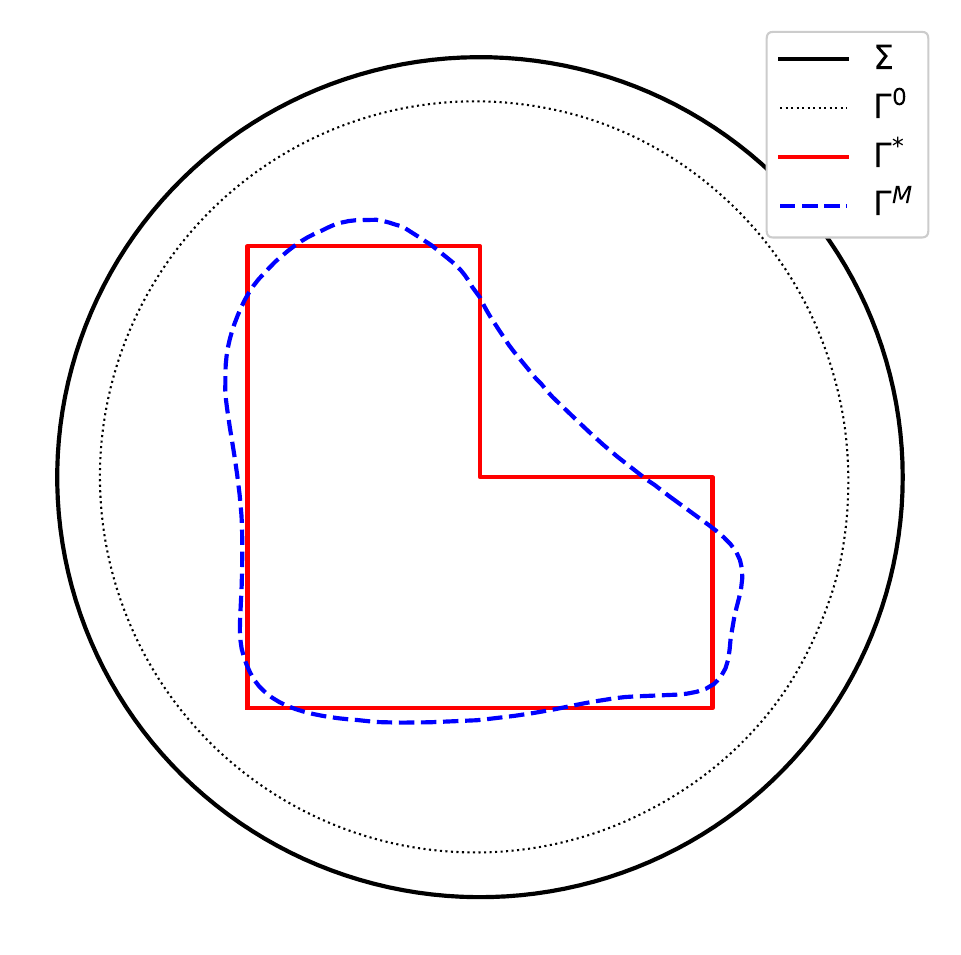}}
\resizebox{0.235\linewidth}{!}{\includegraphics{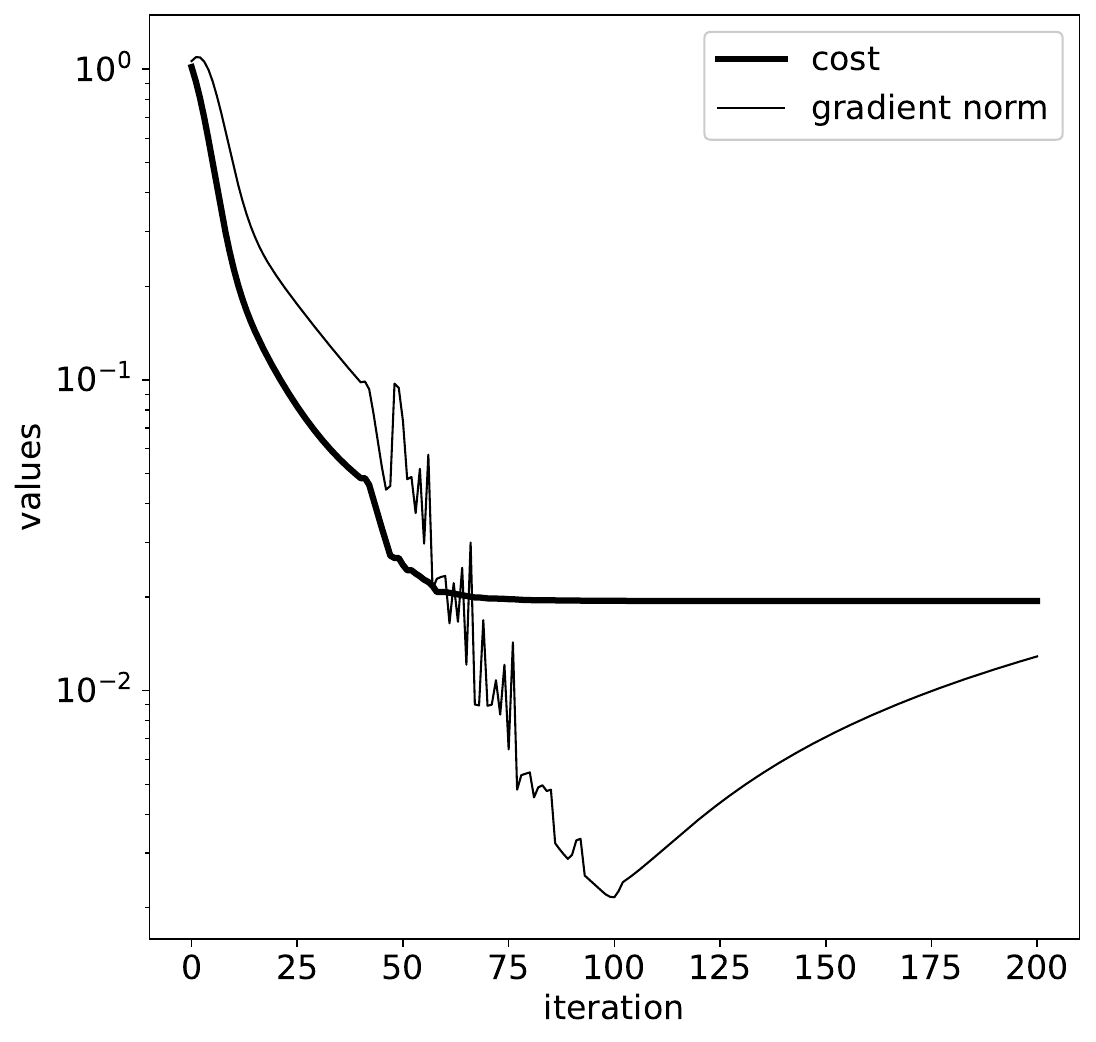}} \quad
\resizebox{0.235\linewidth}{!}{\includegraphics{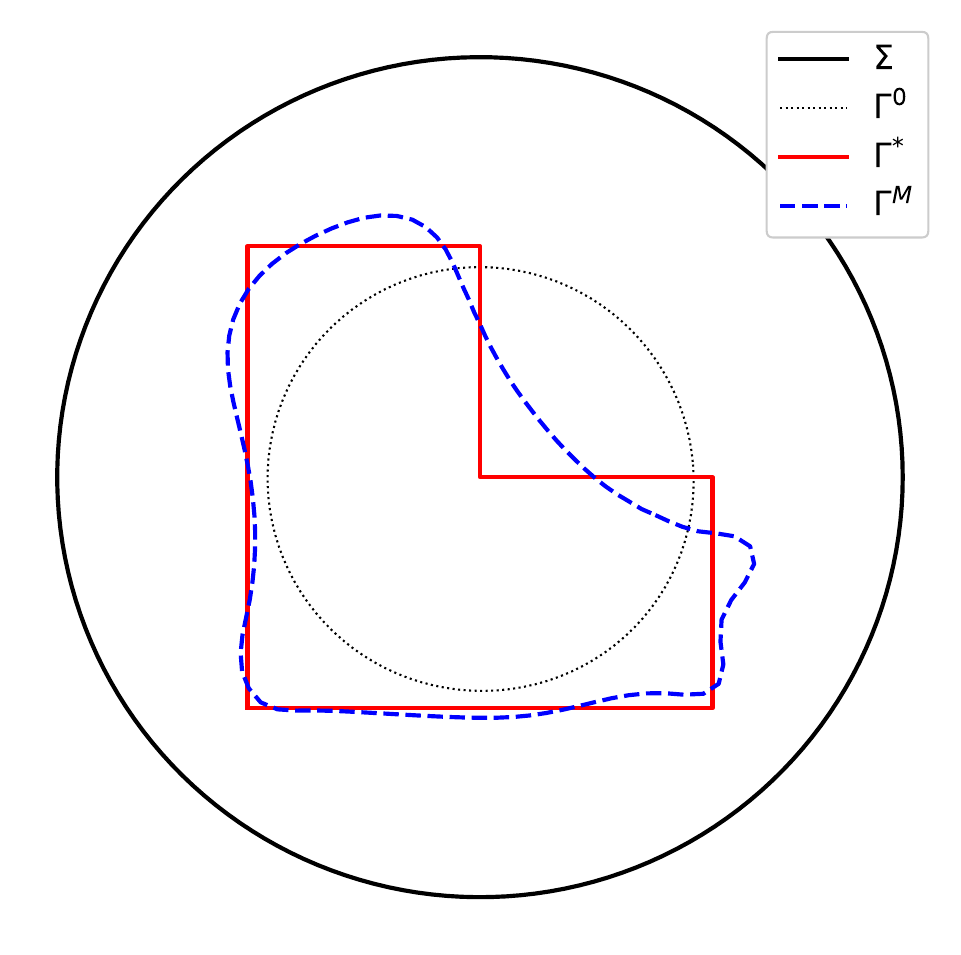}}
\resizebox{0.235\linewidth}{!}{\includegraphics{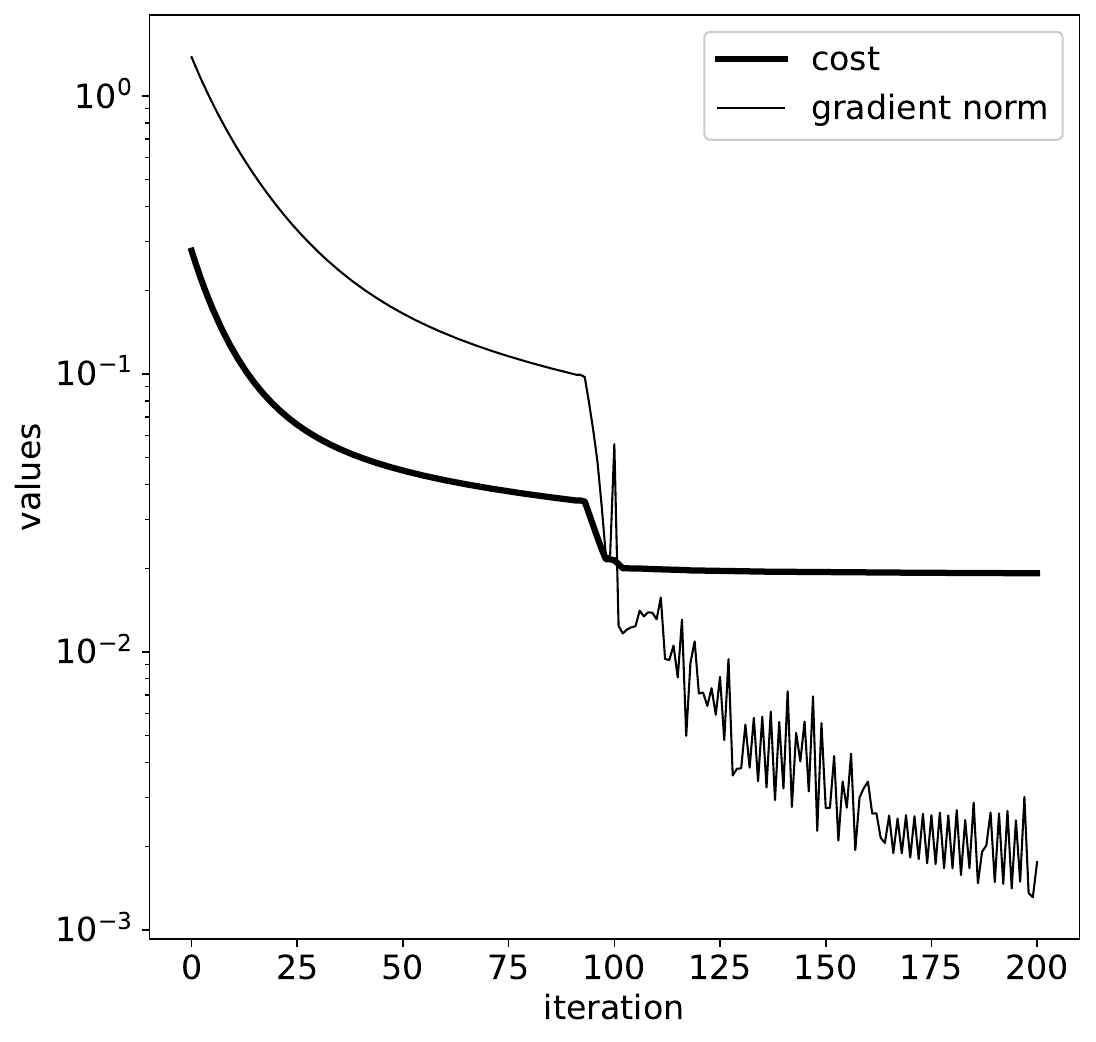}}
\caption{Reconstruction of $\Gamma^{\ast}_{2}$ with noisy data ($\delta = 30\%$) and $\Gamma^{0} = B(0,0.9)$ using ADMM without regularization and without adaptive mesh refinement.}
\label{fig:figure4}
\end{figure}


%
%
%
%
For the non-constant case of $\sigma$, the results are illustrated in Figure~\ref{addfig:figure3}. When compared with Figures~\ref{addfig:figure1} and \ref{addfig:figure2}, which depict results from the conventional shape optimization method, the improvement in this case appears to be minimal.
\ferj{We suspect that the choice of cost function is one of the main reasons behind this issue. 
A more suitable Lagrangian functional, denoted by $L_{\beta}$, should be constructed to explicitly incorporate the diffusion coefficient into the cost function. 
To address this problem, it would be interesting to explore how a Kohn--Vogelius-type cost functional \cite{KohnVogelius1984} performs in this context. 
Incorporating the diffusion coefficient into the Lagrangian corresponding to the ADMM formulation could potentially improve the reconstruction.}
\begin{figure}[htp!]
\centering 
\resizebox{0.3\linewidth}{!}{\includegraphics{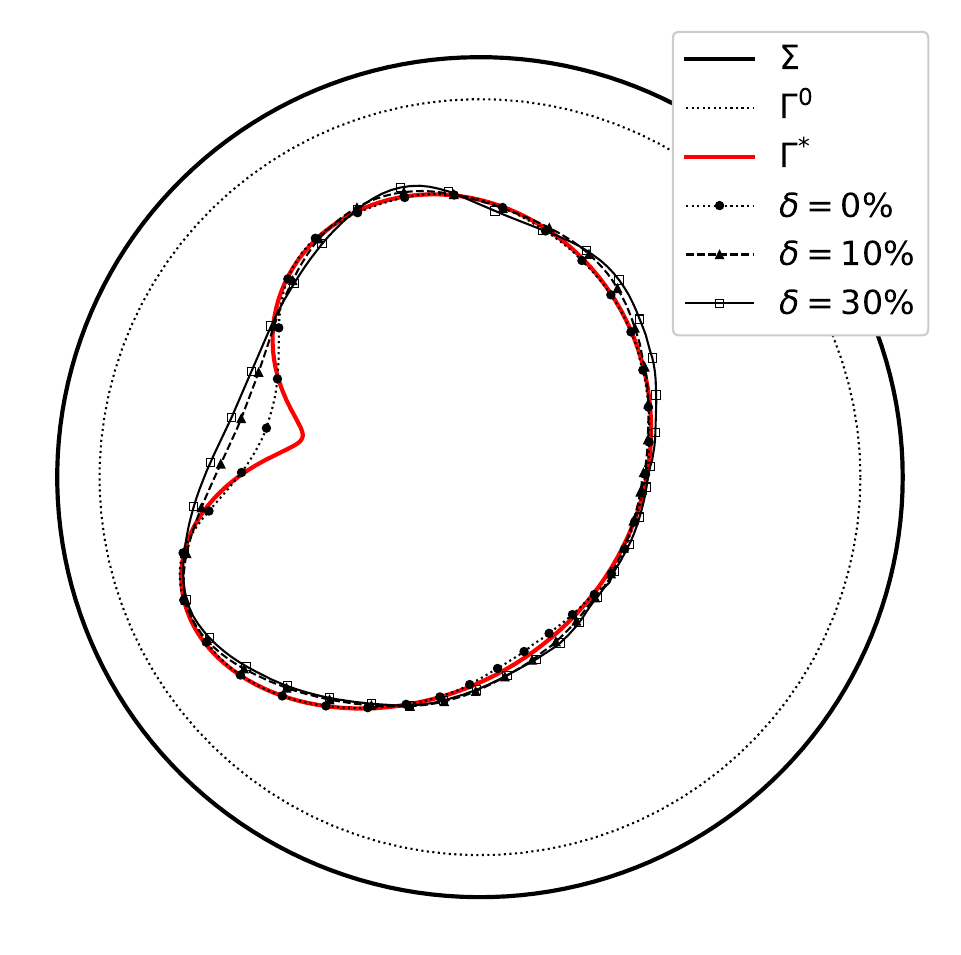}} \quad
\resizebox{0.3\linewidth}{!}{\includegraphics{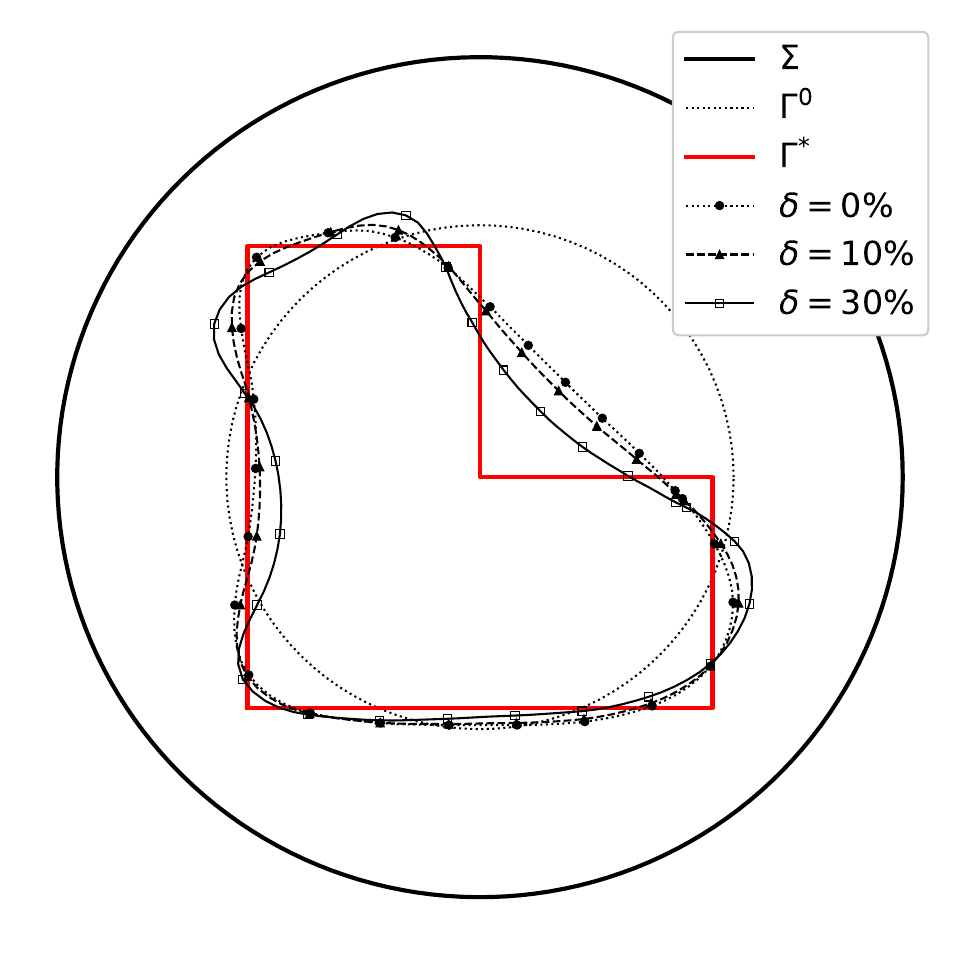}}
\caption{Reconstruction of $\Gamma^{\ast}_{1}$ (left, also note the difference in initial guesses) and $\Gamma^{\ast}_{2}$ (right) via ADMM both with perimeter regularization and without adaptive mesh refinement.}
\label{addfig:figure3}
\end{figure}


\subsection{Numerical experiments in 3D via ADMM with space-dependent diffusion coefficient} \label{subsec:ADMM_3D_numerics}
Let us now examine test cases in three spatial dimensions to further evaluate our algorithm. 
The domain $D$  is the unit sphere centered at the origin, $\sigma(x) = 1.1$ and $\bb(x) = (1.0 + 0.5 \sin{{(\arctan(x_{2}/x_{1}))}},  1.0 + 0.5 \cos{{(\arctan(x_{2}/x_{1}))}}, 1.5)^{\top}$, $x=(x_1, x_{2}, x_{3}) \in D \subset \mathbb{R}^{3}$.
Again, the data is synthetically constructed and we set $g(x) = \exp(x_{1}^{2} + x_{2}^{2})$, $x=(x_1, x_{2}, x_{3}) \in \partial{D}$. 
On the other hand, the computational setup remains largely the same as in the 2D case, with only a few adjustments. 
Specifically, we set $N = 600$, $\lambda^{0} = 0.001$, $a = 0.5 \min u(\Omega \setminus \overline{\omega}^{\ast})$, $b = 1.5 \max u(\Omega \setminus \overline{\omega}^{\ast})$, $v^{0} = 1$, $\varepsilon = 10^{-6}$, and $\omega^{0} = B(\mathbf{0}, 0.8)$.

For the exact obstacle, we analyze two shapes: a dumbbell shape and a star shape, setting $\beta = 0.1$ in the ADMM scheme. 
In the forward problem, the exact domain is discretized with minimum and maximum mesh sizes $h_{\min}^{\ast} = 0.05$ and $h_{\max}^{\ast} = 0.1$ (see, e.g., the first row of plots in Figure~\ref{fig:figure6}), using tetrahedrons with a maximum volume of $0.001$. 
For the inversion process, the domain $(\Omega \setminus \overline{\omega})^{0}$ is discretized with a coarse mesh, having $h_{\min} = 0.15$ and $h_{\max} = 0.2$, using tetrahedrons with a volume of $0.005$.
When dealing with noisy data, we combine the ADMM and \ferj{the conventional shape optimization (SO)} schemes with perimeter regularization (cf. \cite{RabagoHadriAfraitesHendyZaky2024}), using a small parameter (equal to $0.003$), to reduce excessive irregularities on the surfaces of the obstacles; see \cite{RabagoAzegami2018}.

The figures, from Figure~\ref{fig:figure6} to Figure~\ref{fig:figure9}, show the numerical results including both exact measurements and data affected by noise.
The key observations align with those from the 2D experiments. 
Notably, the ADMM results are more accurate than those from SO, as ADMM effectively reconstructs the exact obstacle even with high noise levels in the data. 
Additionally, it is observed that smaller obstacles are harder to reconstruct accurately, as shown in Figure~\ref{fig:figure8} and Figure~\ref{fig:figure9}. 
Nevertheless, ADMM detects the concavities of obstacles more effectively than SO, demonstrating a significant advantage.
To provide more insights on these methods, Figure~\ref{fig:figure10} displays the plots showing the histories of cost values and gradient norms for the test cases considered.
\fergy{From the plots, it is evident that the cost values for SO are consistently lower compared to those for ADMM. 
This is primarily due to the additional term in the objective functional of the ADMM formulation. 
Furthermore, the increase in cost values for ADMM can be attributed to the last integral term appearing in the objective functional, which is not always positive.}
We also considered cases where $\sigma$ is non-constant. 
However, as in the 2D case, the improvement is minimal.
\begin{figure}[htp!]
\centering 
\resizebox{0.235\linewidth}{!}{\includegraphics{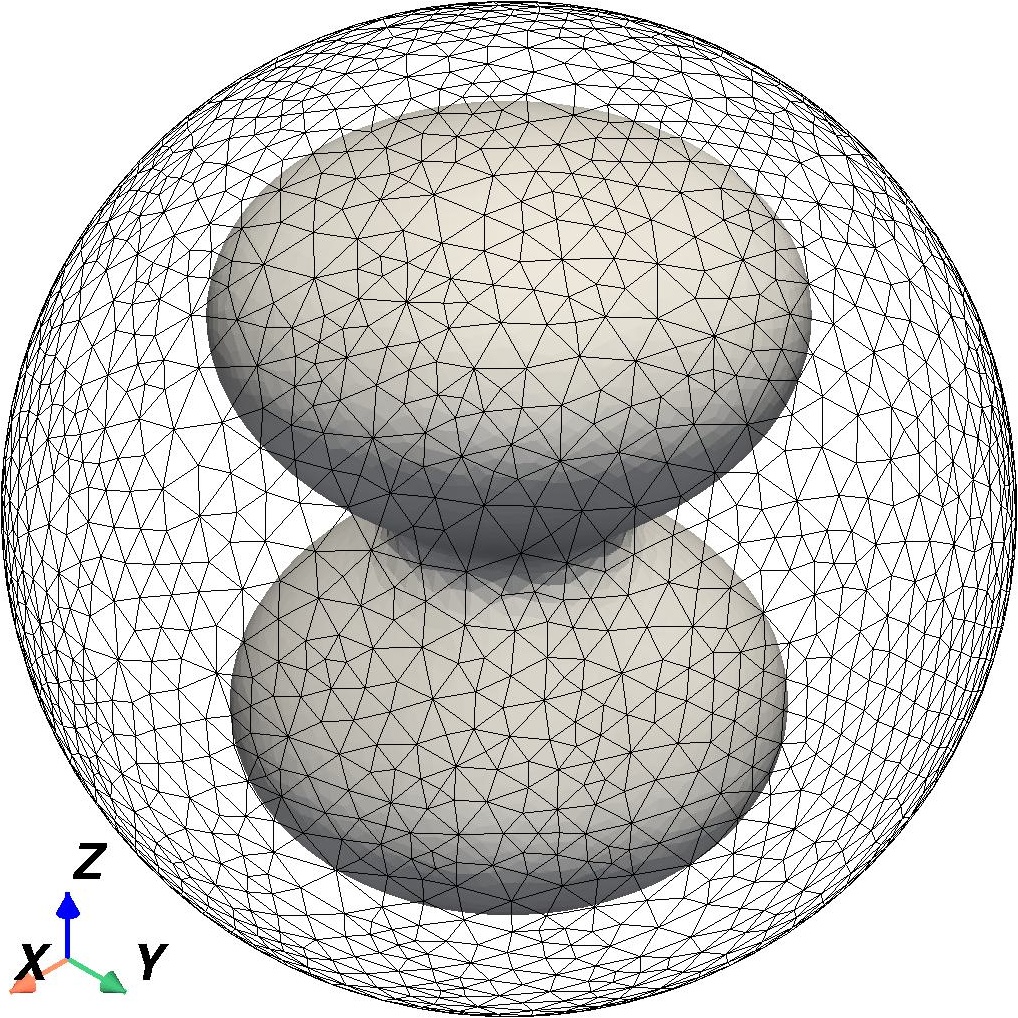}} \quad 
\resizebox{0.235\linewidth}{!}{\includegraphics{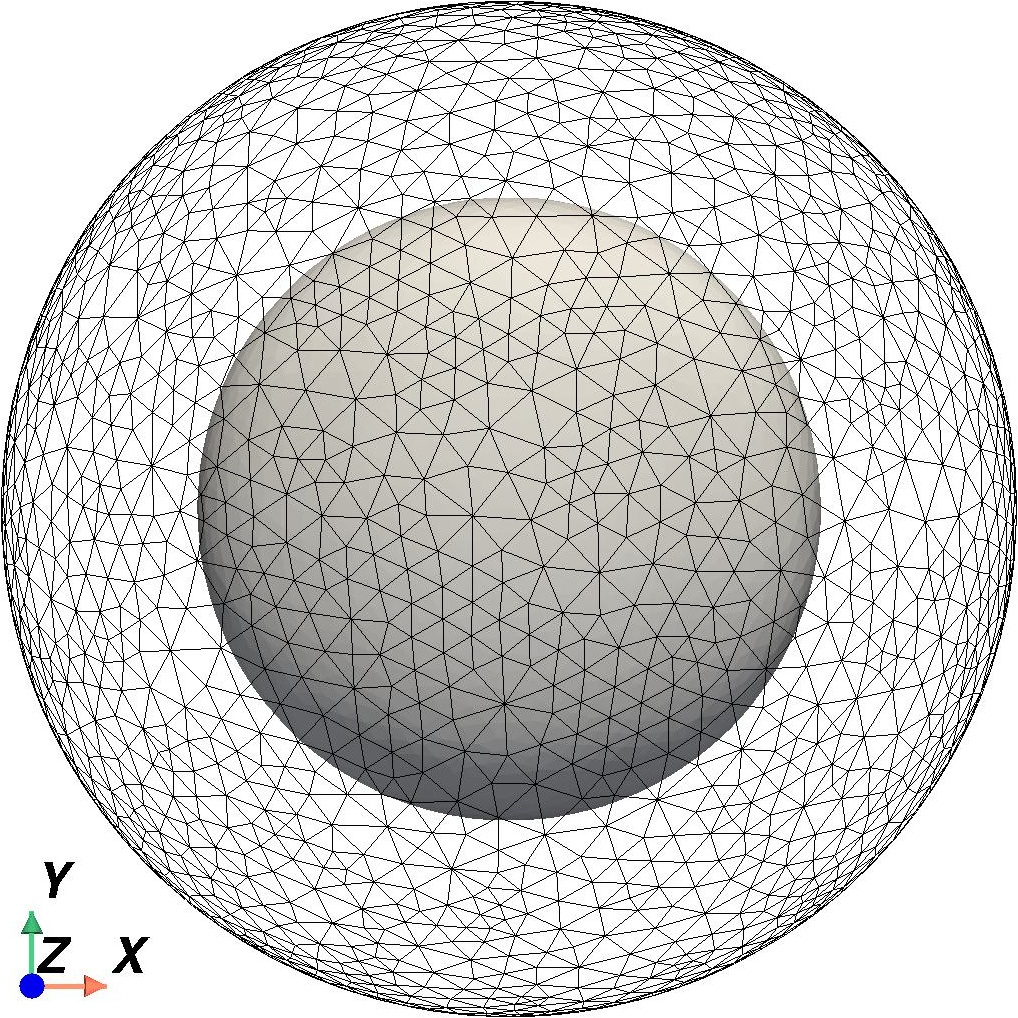}} \quad 
\resizebox{0.235\linewidth}{!}{\includegraphics{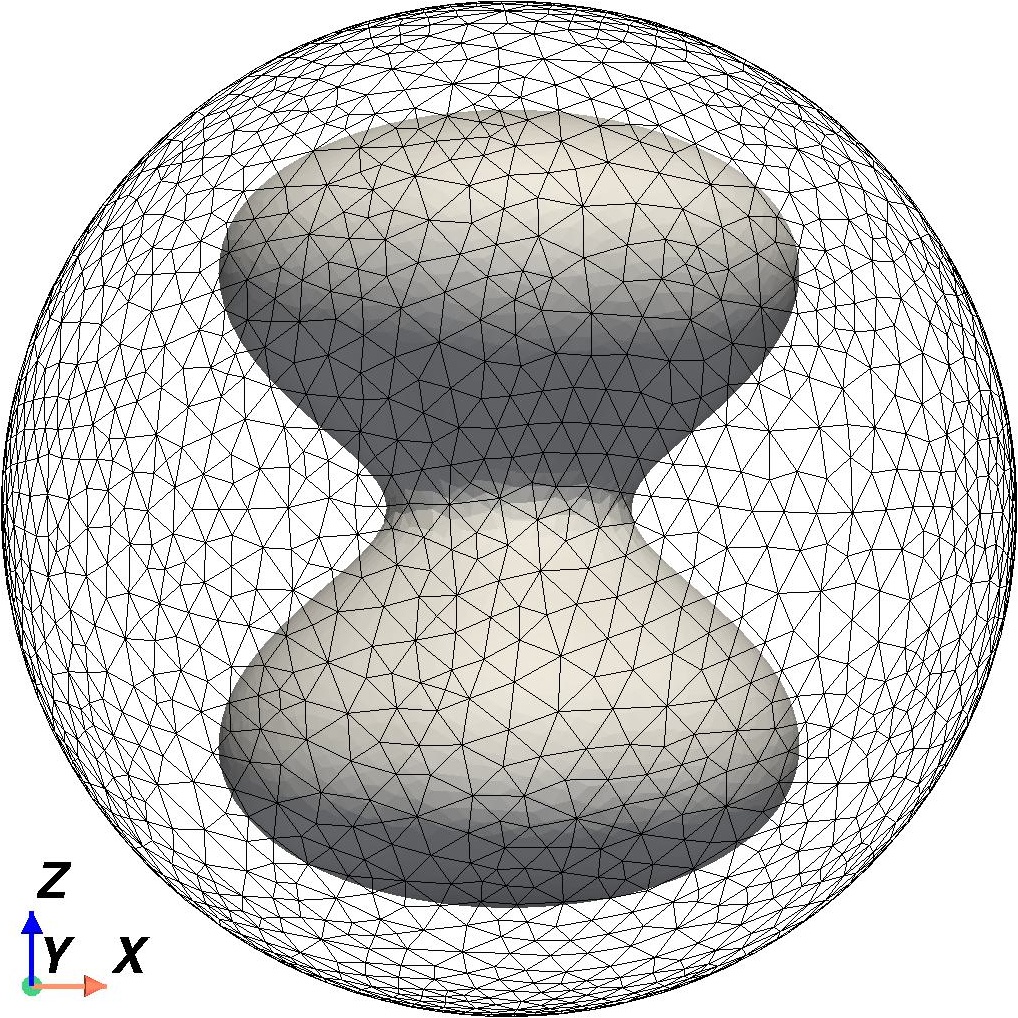}}\\[0.5em]
\resizebox{0.235\linewidth}{!}{\includegraphics{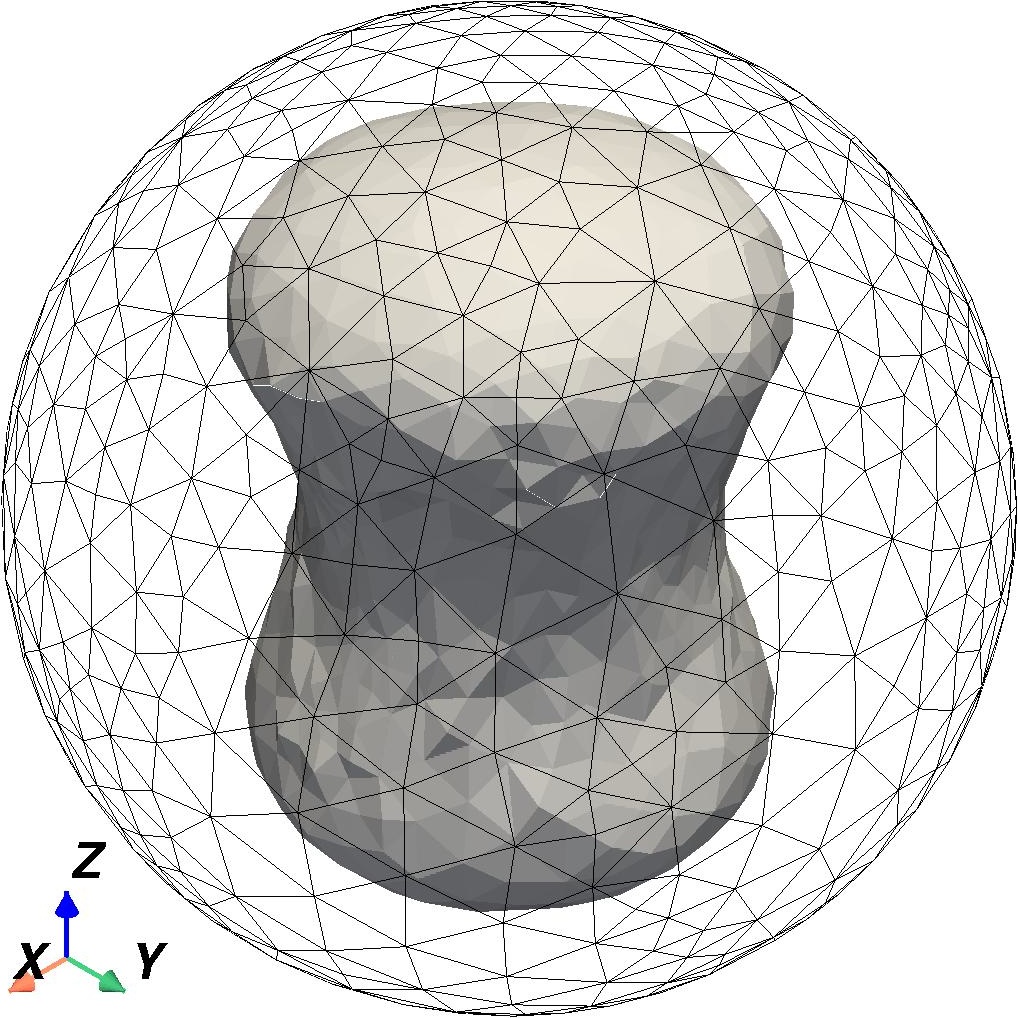}} \quad 
\resizebox{0.235\linewidth}{!}{\includegraphics{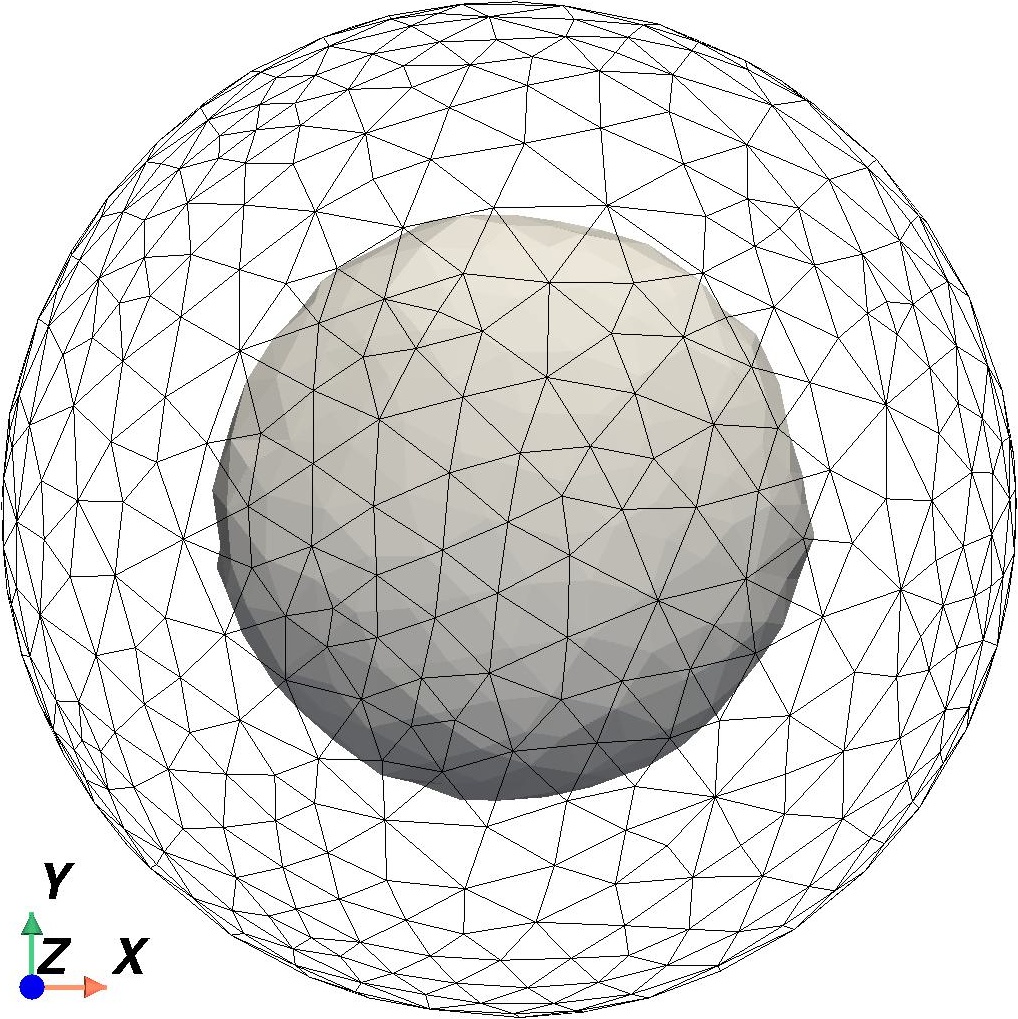}} \quad 
\resizebox{0.235\linewidth}{!}{\includegraphics{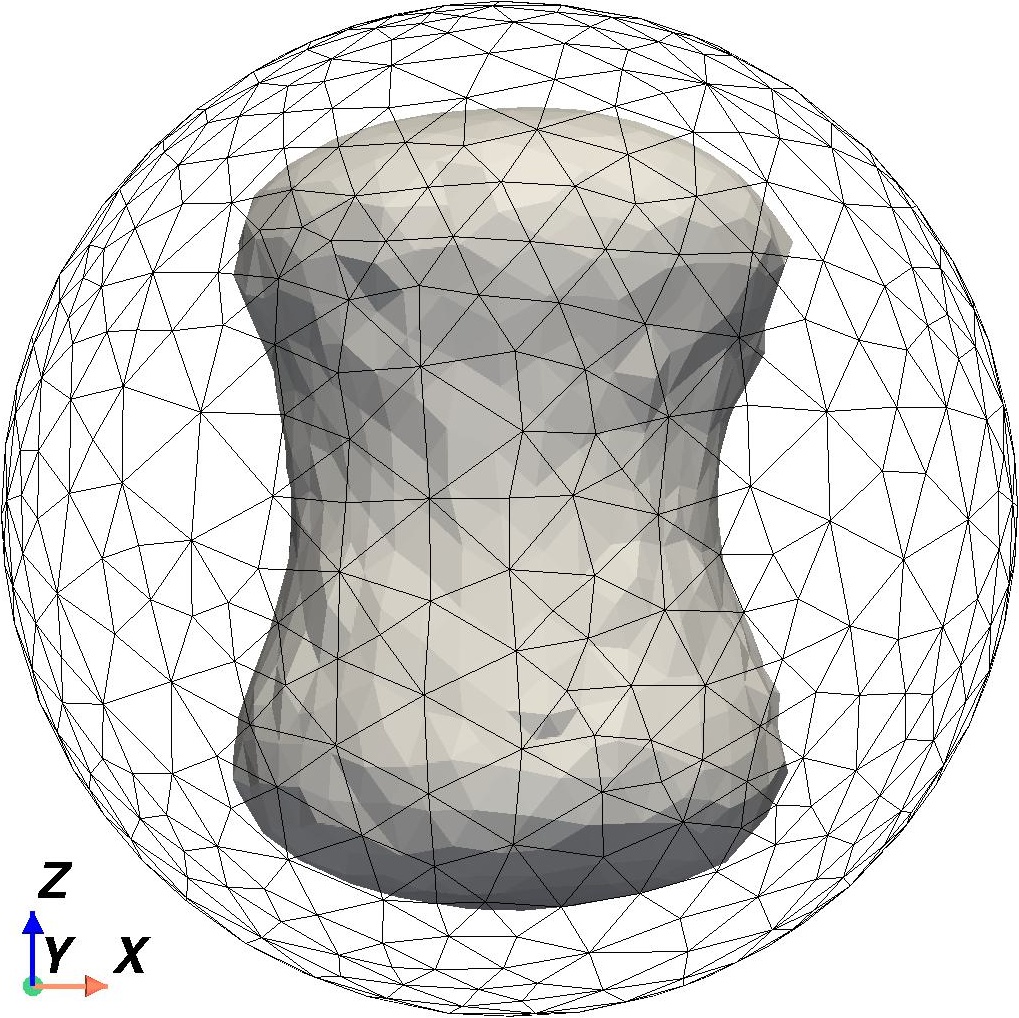}}\\[0.5em]
\resizebox{0.235\linewidth}{!}{\includegraphics{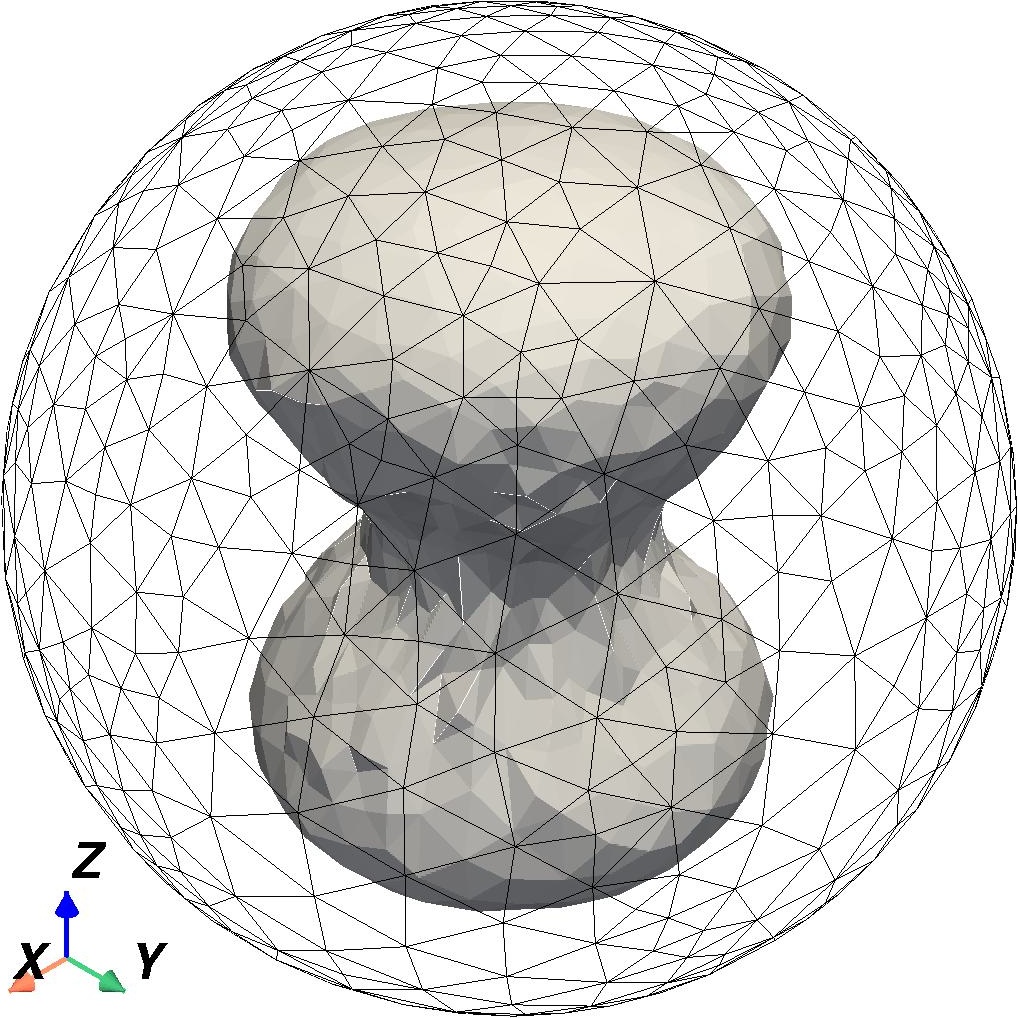}} \quad 
\resizebox{0.235\linewidth}{!}{\includegraphics{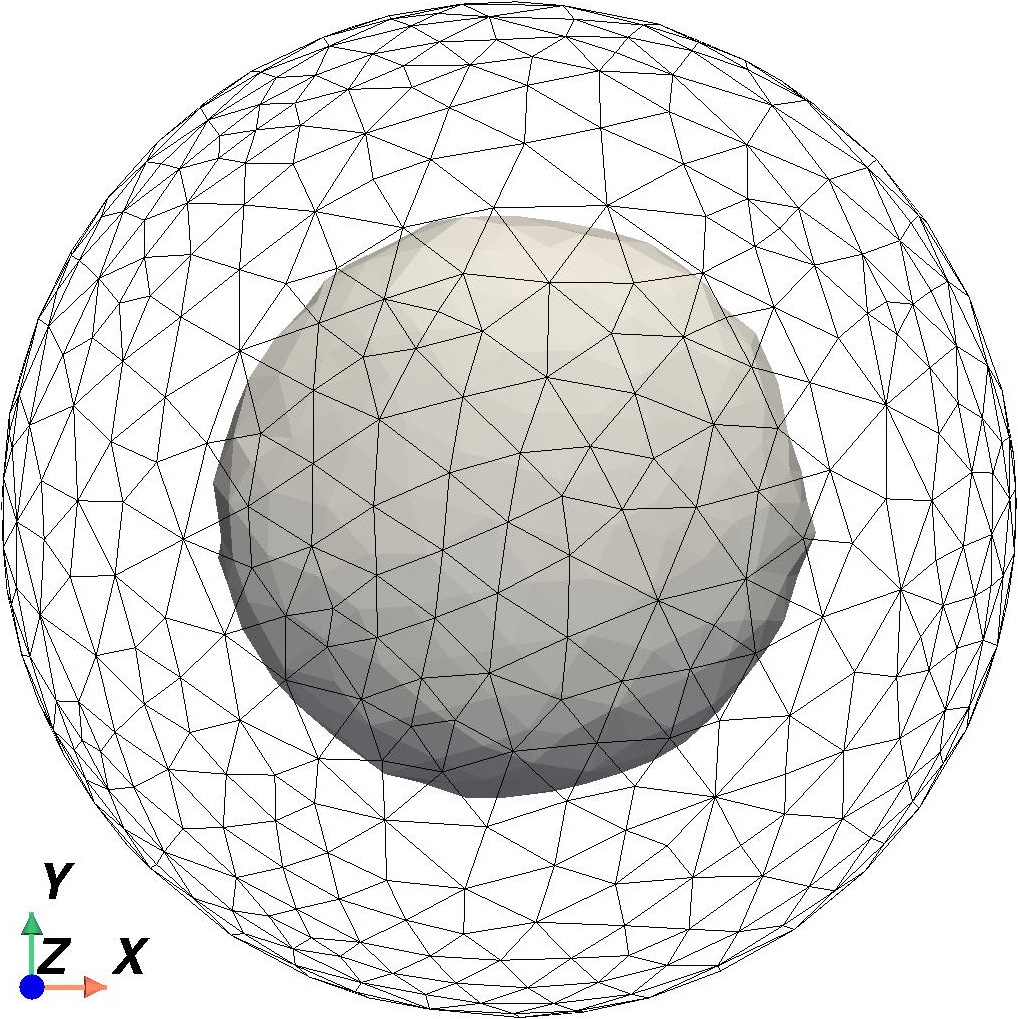}} \quad 
\resizebox{0.235\linewidth}{!}{\includegraphics{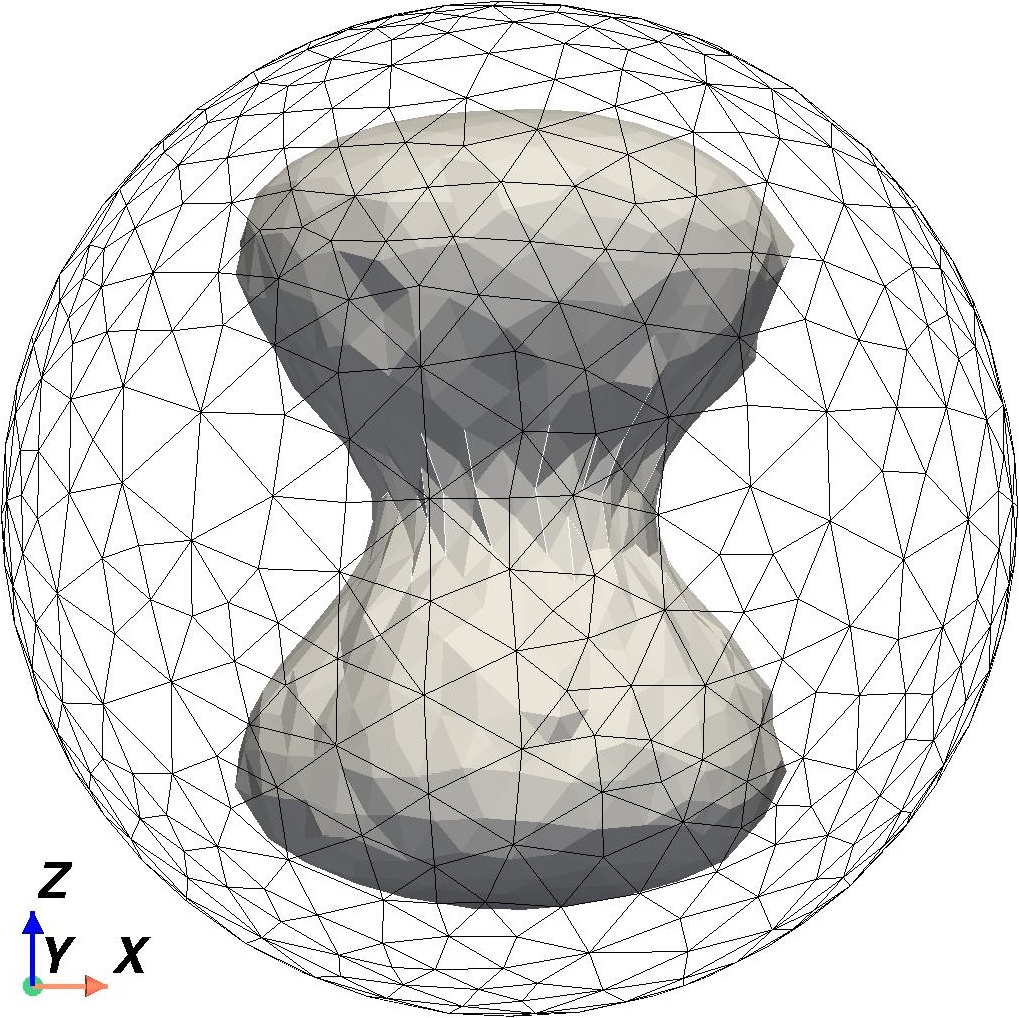}}
\caption{Exact geometry of a dumb-bell shape obstacle (top/first row) and reconstructed shapes obtained via SO (middle/second row) and ADMM (bottom/third row) with exact data.}
\label{fig:figure6}
\end{figure}

\begin{figure}[htp!]
\centering 
\resizebox{0.235\linewidth}{!}{\includegraphics{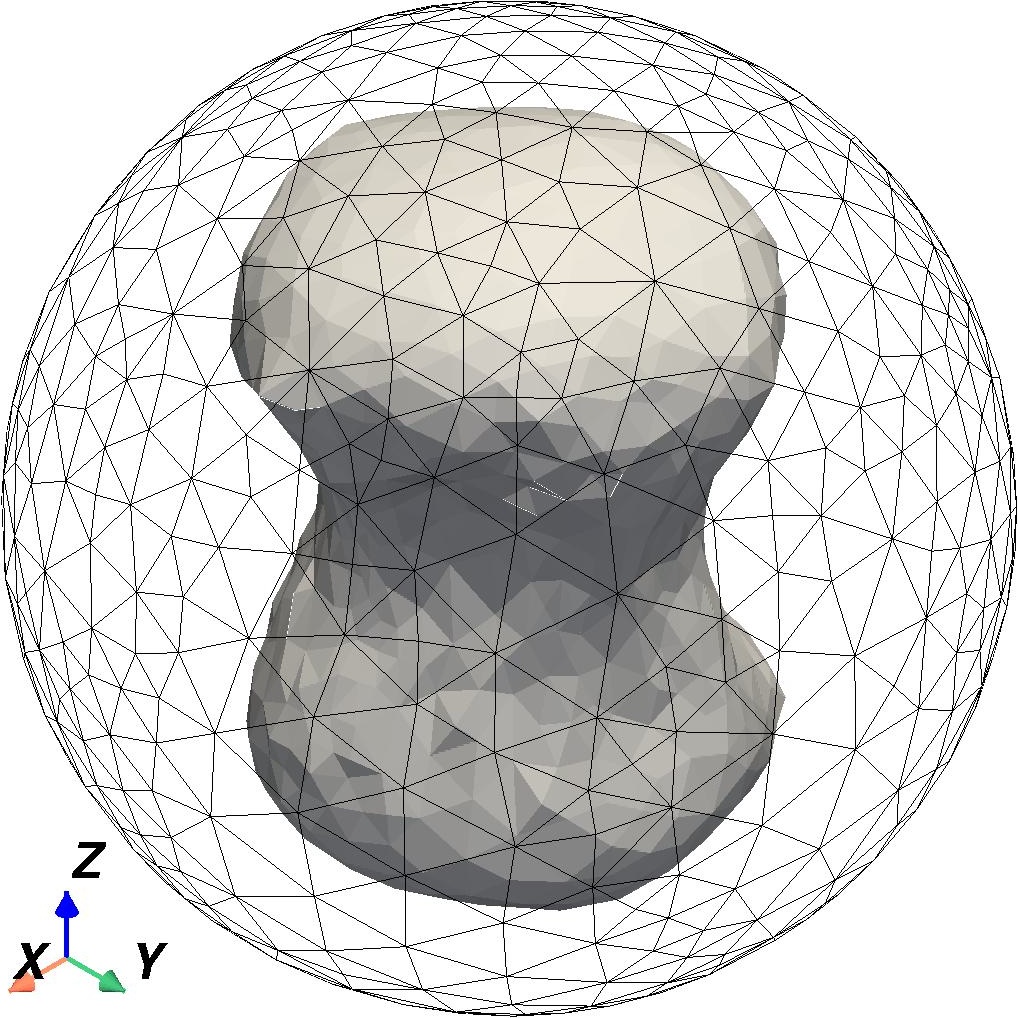}} \quad 
\resizebox{0.235\linewidth}{!}{\includegraphics{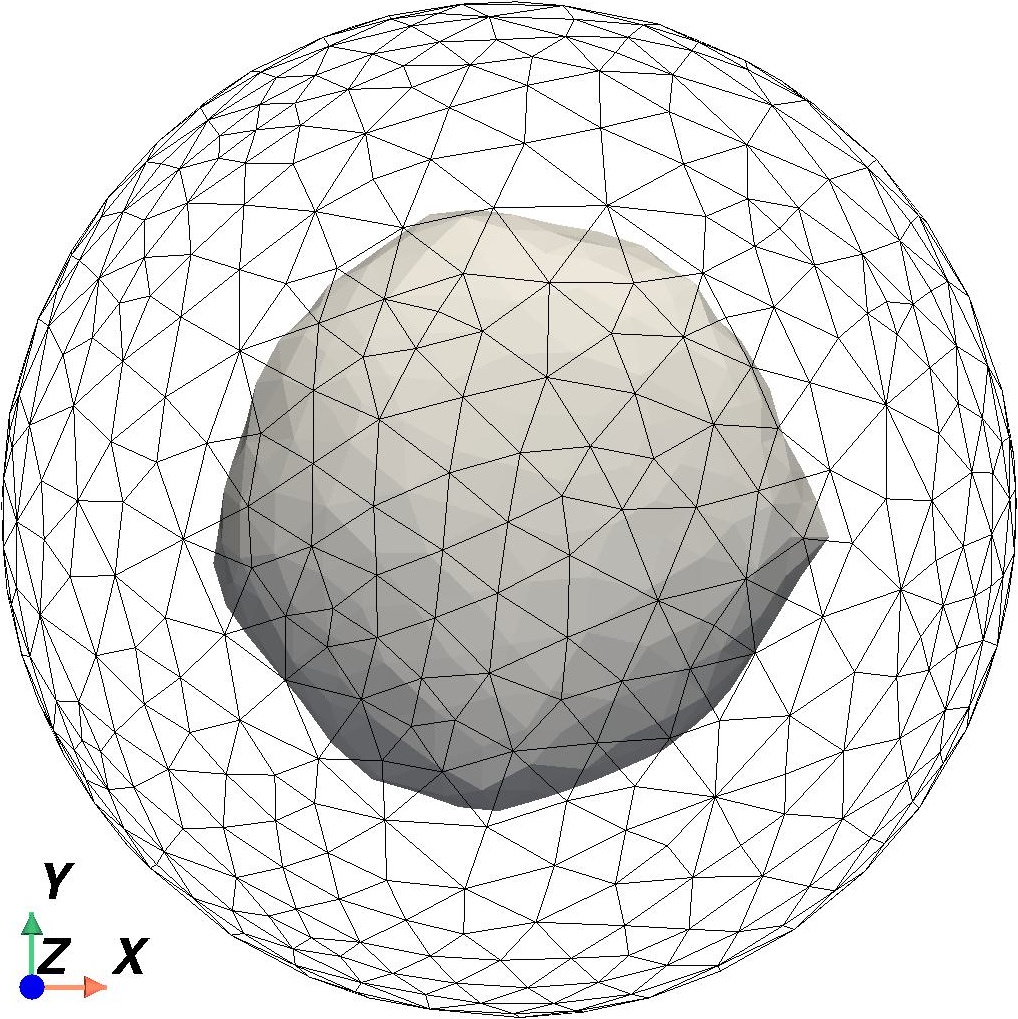}} \quad 
\resizebox{0.235\linewidth}{!}{\includegraphics{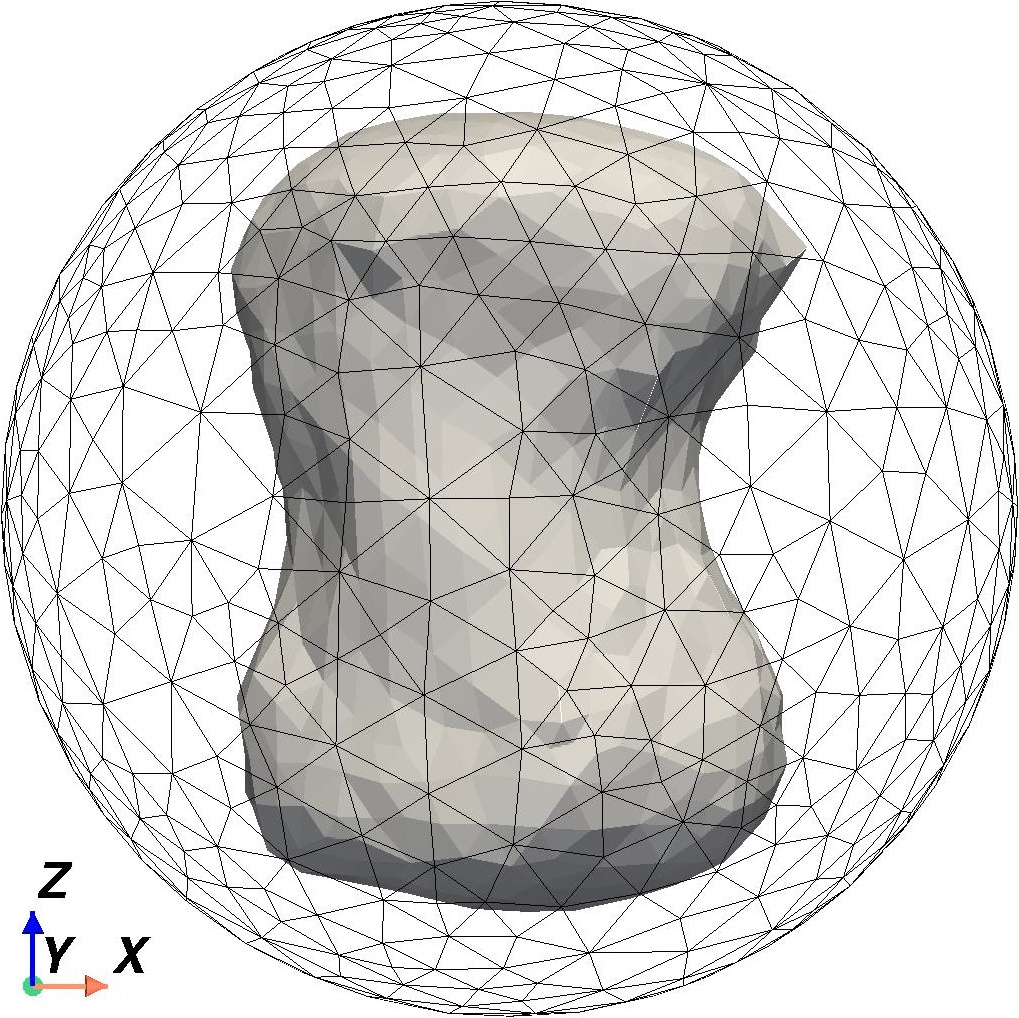}}\\[0.5em]
\resizebox{0.235\linewidth}{!}{\includegraphics{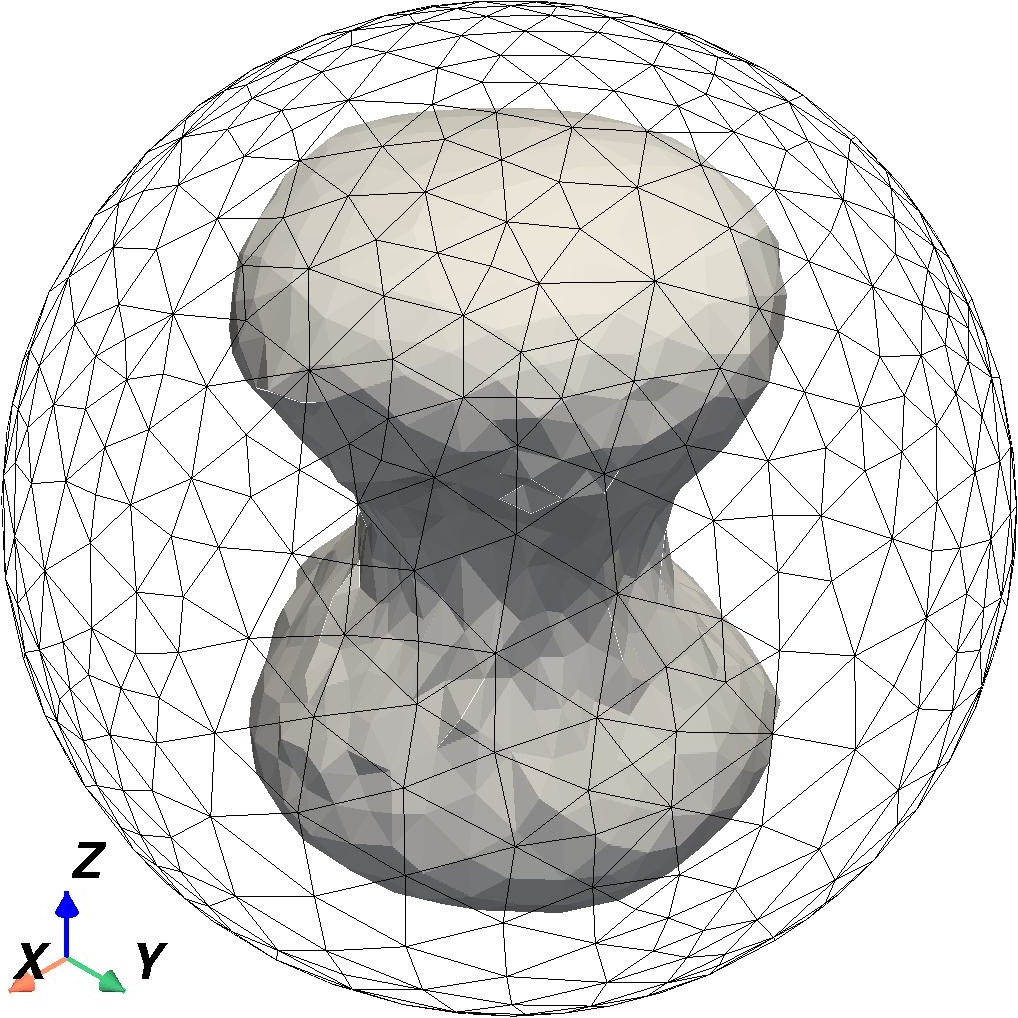}} \quad 
\resizebox{0.235\linewidth}{!}{\includegraphics{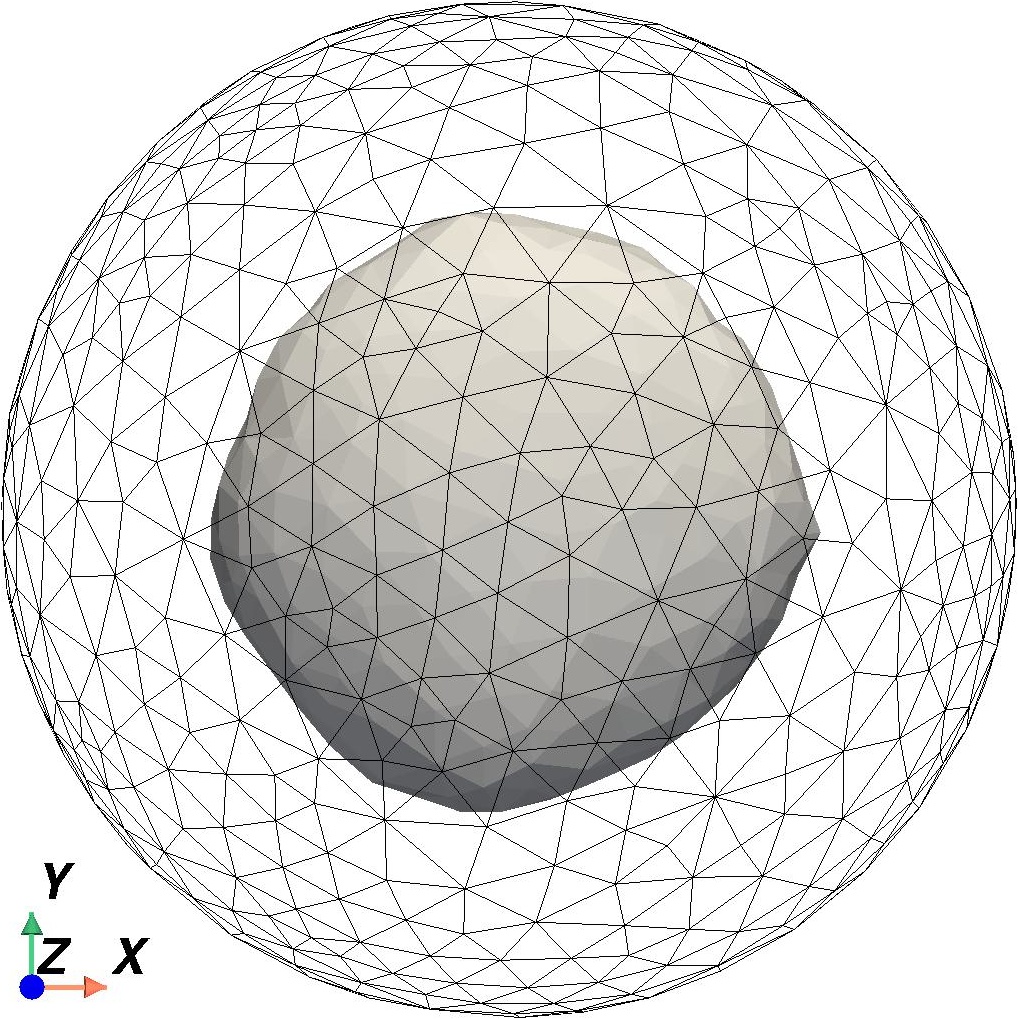}} \quad 
\resizebox{0.235\linewidth}{!}{\includegraphics{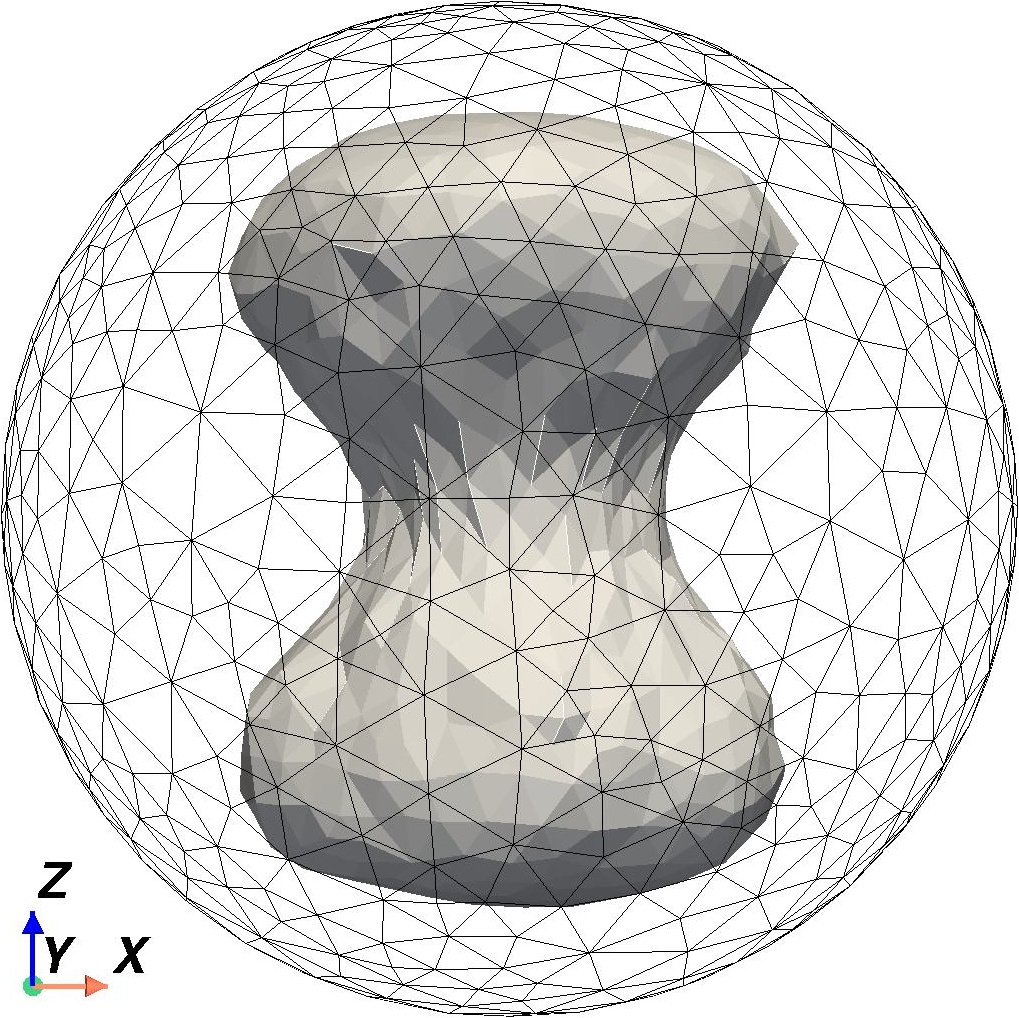}}
\caption{Reconstructed shapes obtained via SO (top row) and ADMM (bottom row) with noisy data at a $30\%$ noise level.}
\label{fig:figure7}
\end{figure}

\begin{figure}[htp!]
\centering 
\resizebox{0.235\linewidth}{!}{\includegraphics{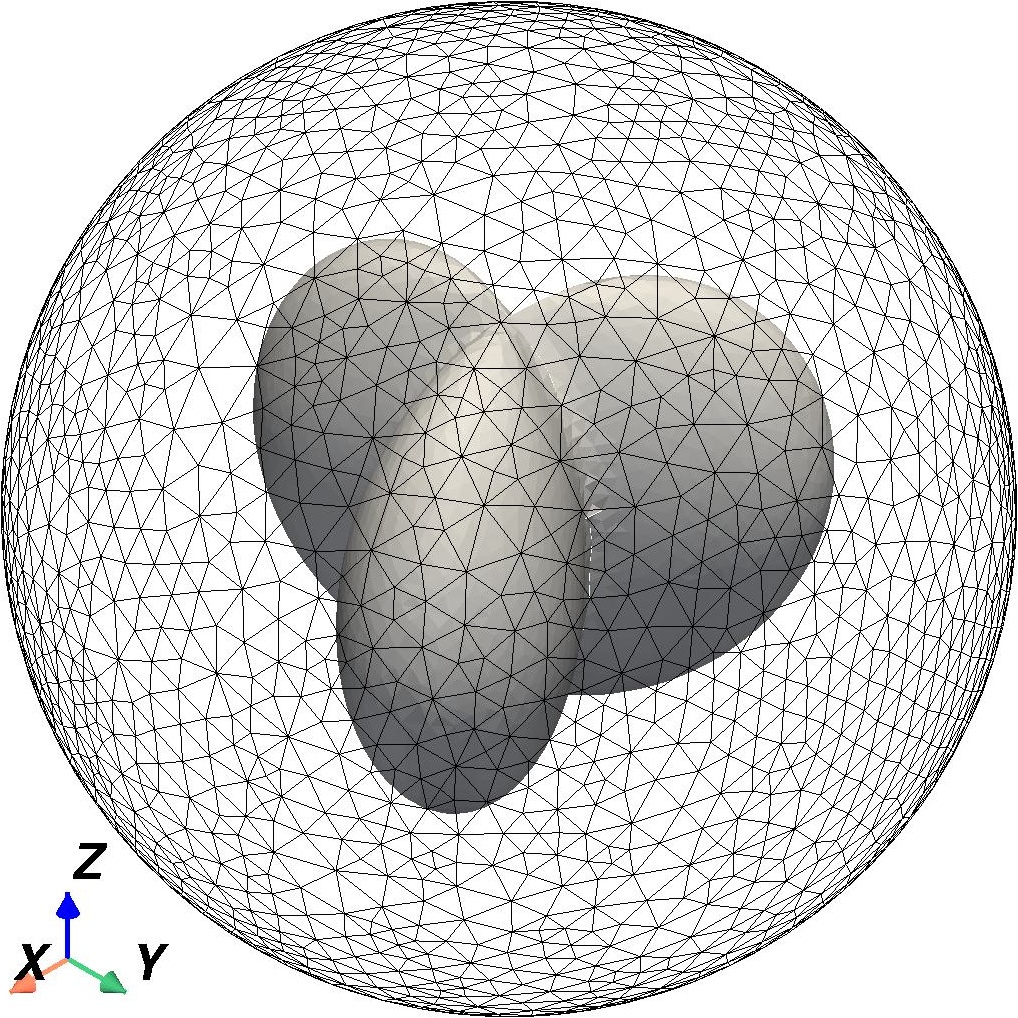}} \quad 
\resizebox{0.235\linewidth}{!}{\includegraphics{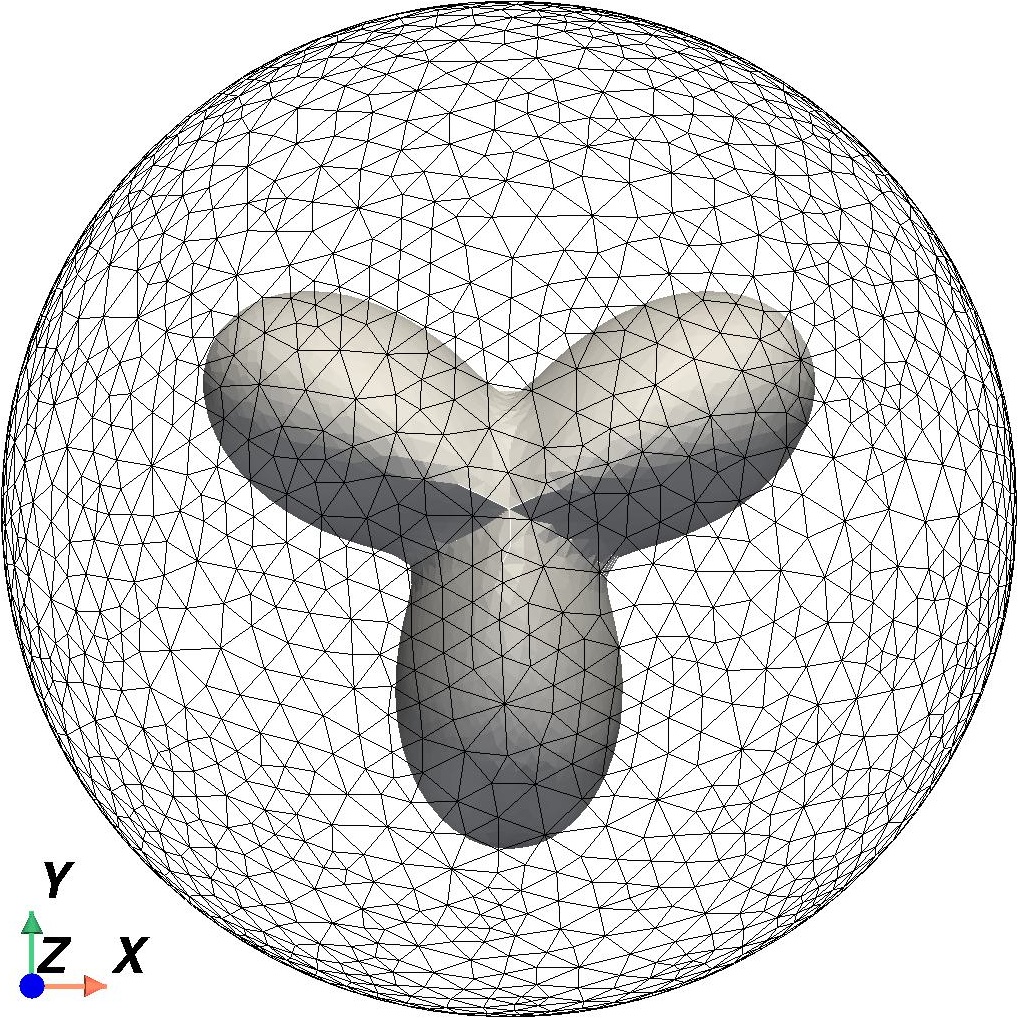}} \quad 
\resizebox{0.235\linewidth}{!}{\includegraphics{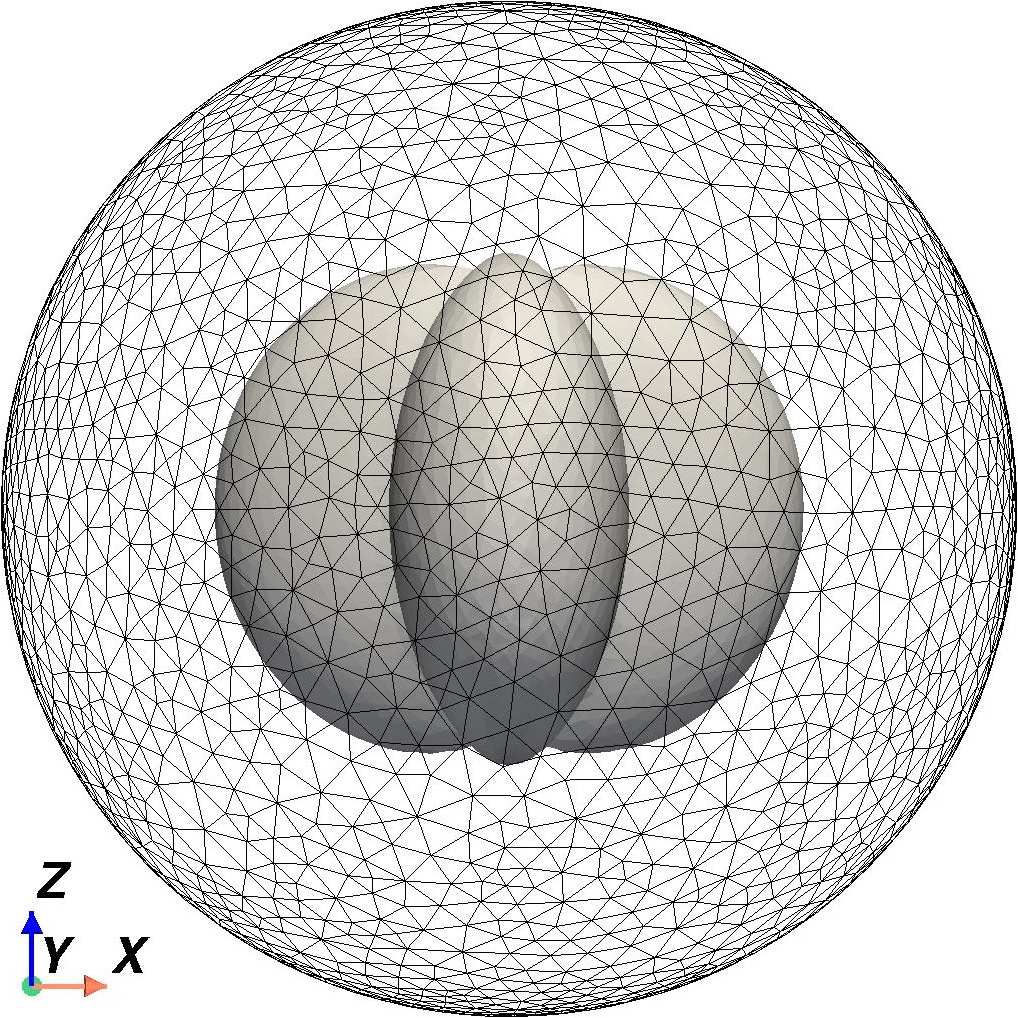}}\\[0.5em]
\resizebox{0.235\linewidth}{!}{\includegraphics{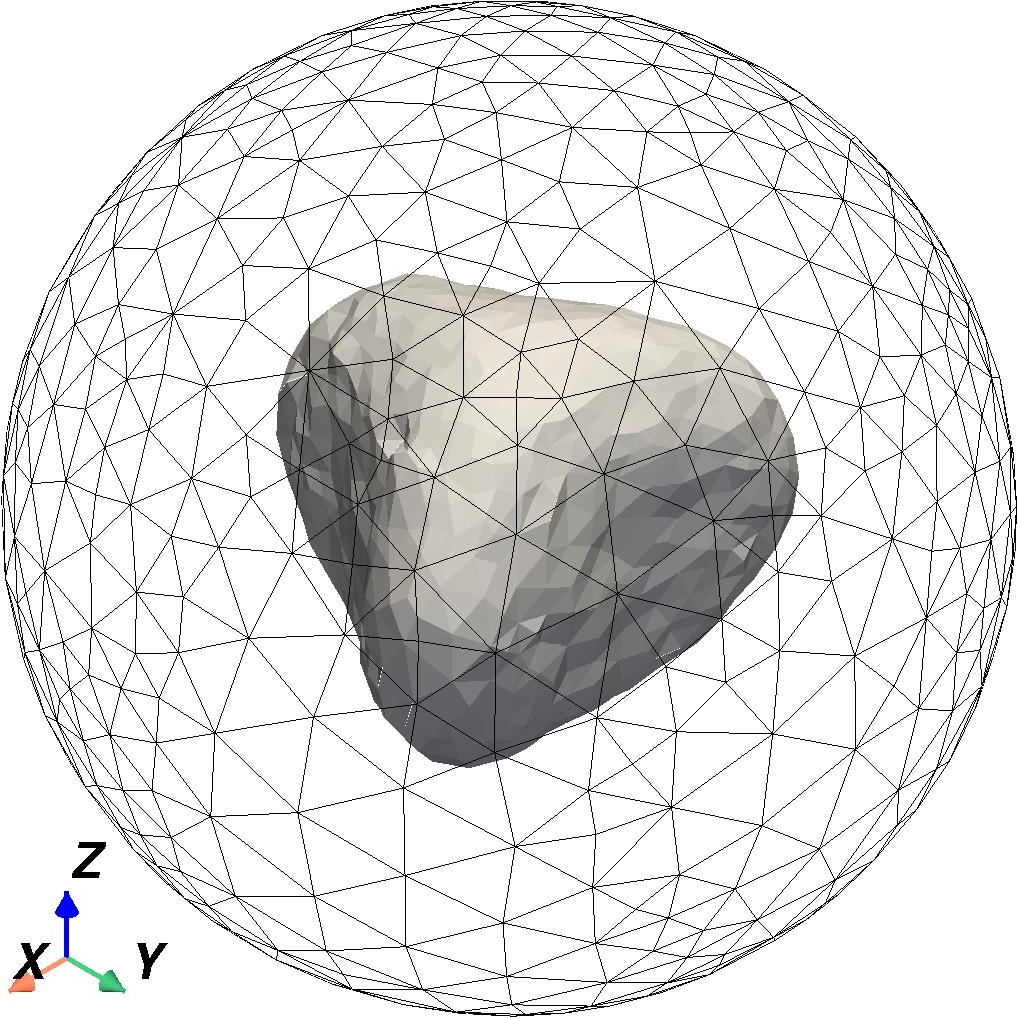}} \quad 
\resizebox{0.235\linewidth}{!}{\includegraphics{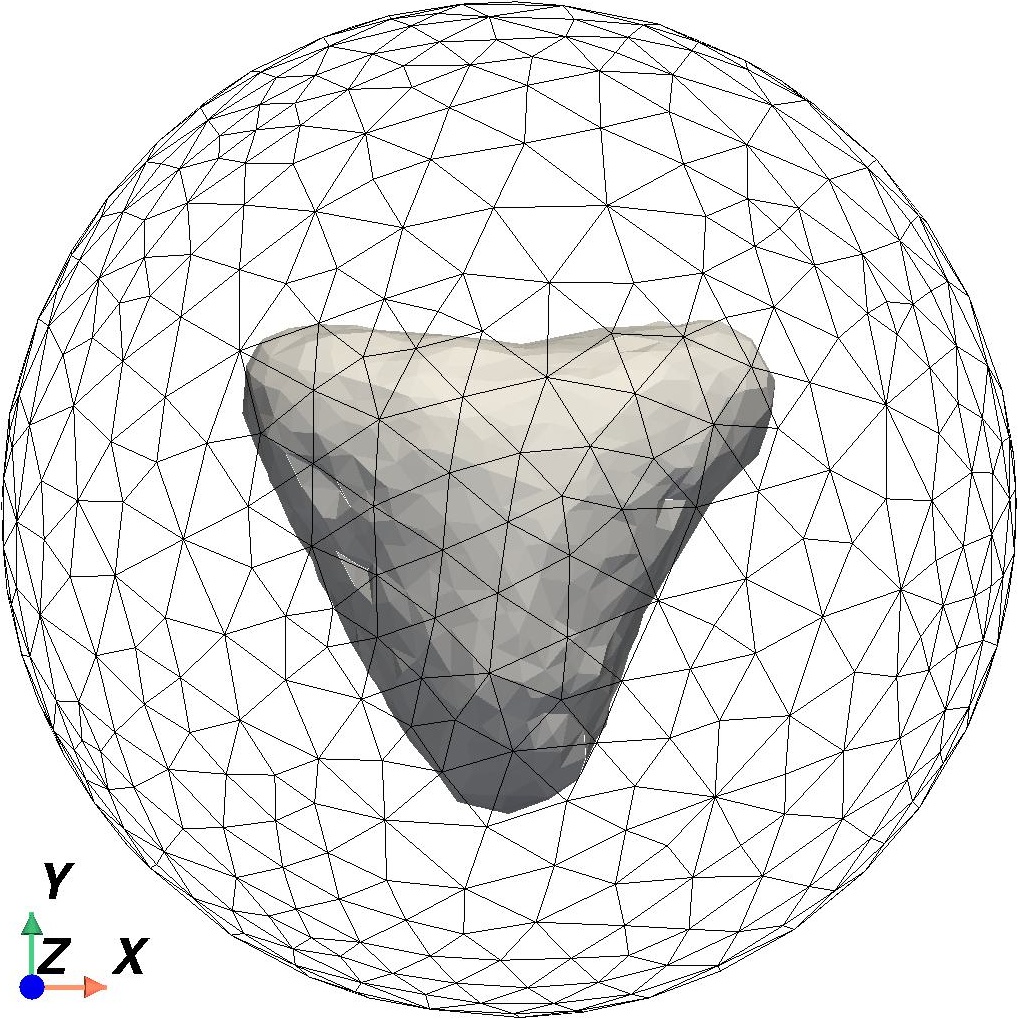}} \quad 
\resizebox{0.235\linewidth}{!}{\includegraphics{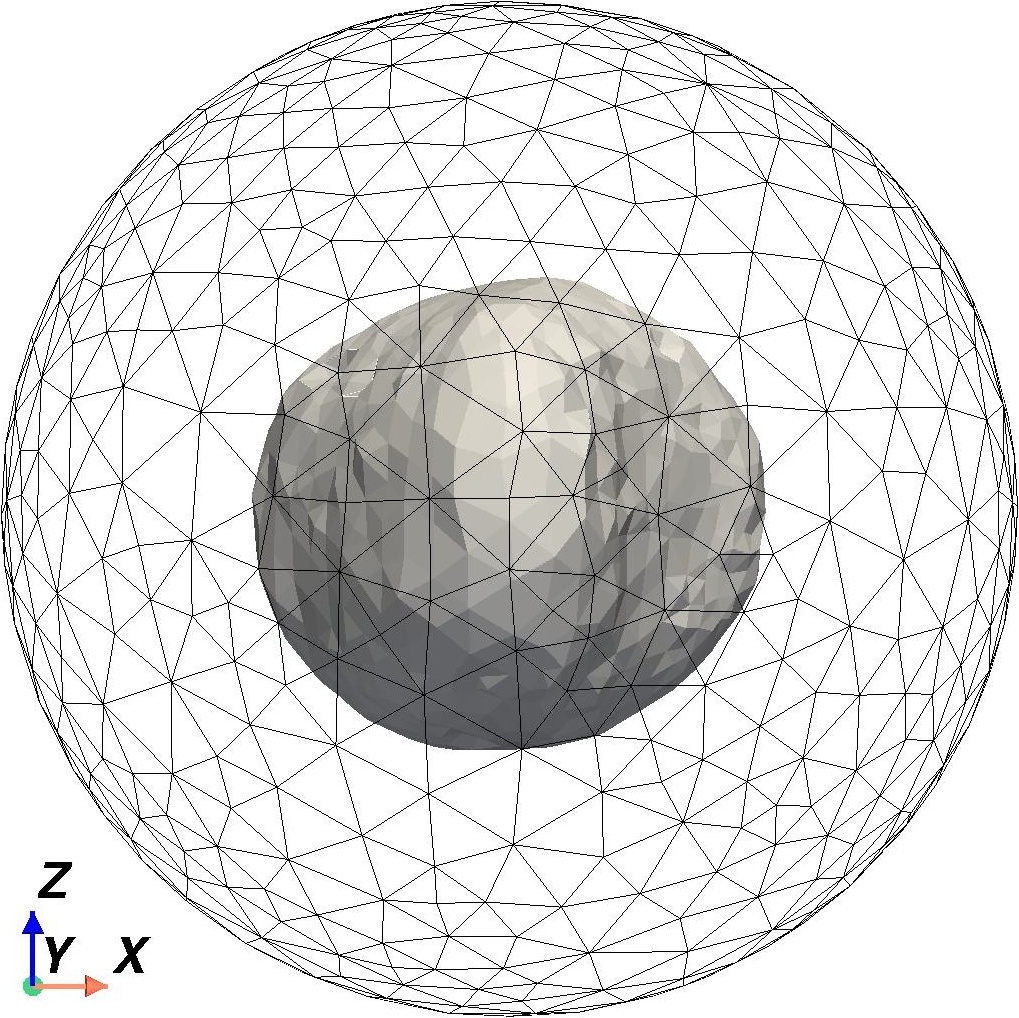}}\\[0.5em]
\resizebox{0.235\linewidth}{!}{\includegraphics{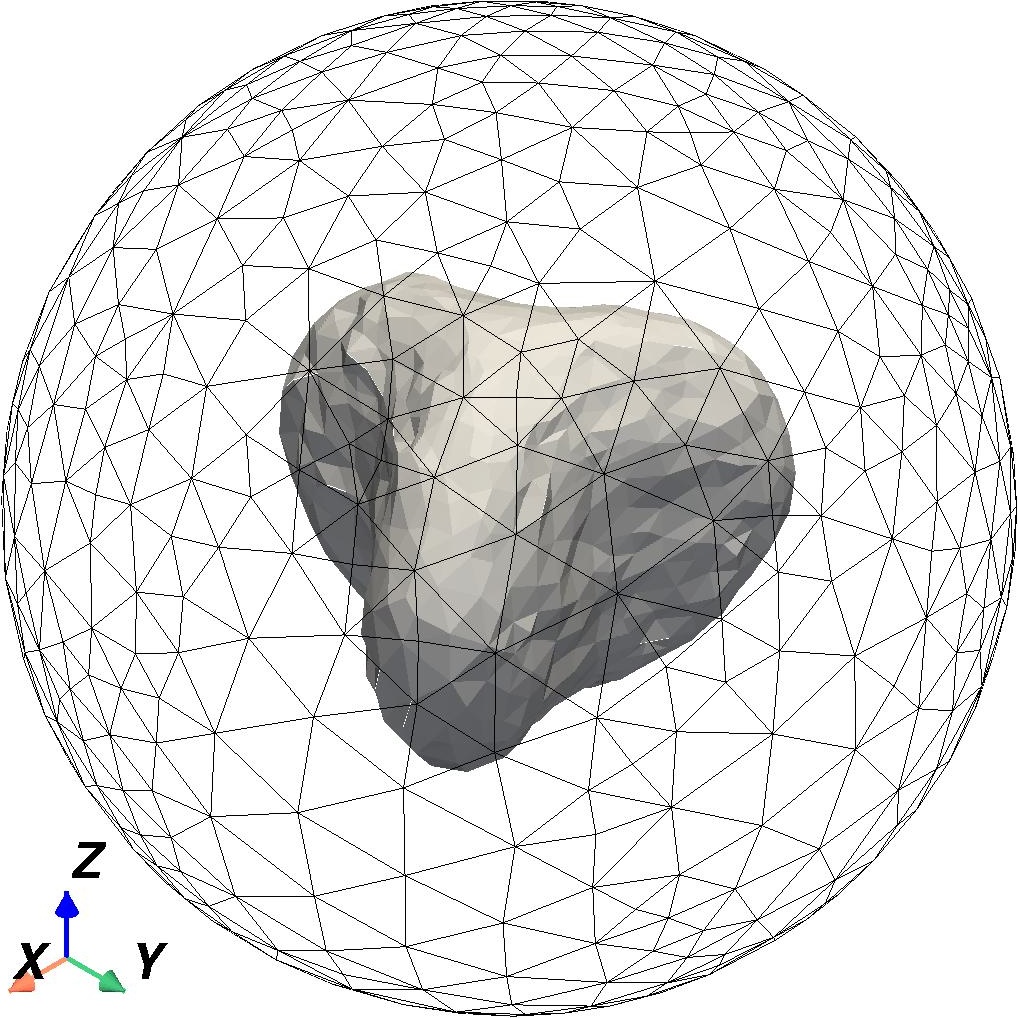}} \quad 
\resizebox{0.235\linewidth}{!}{\includegraphics{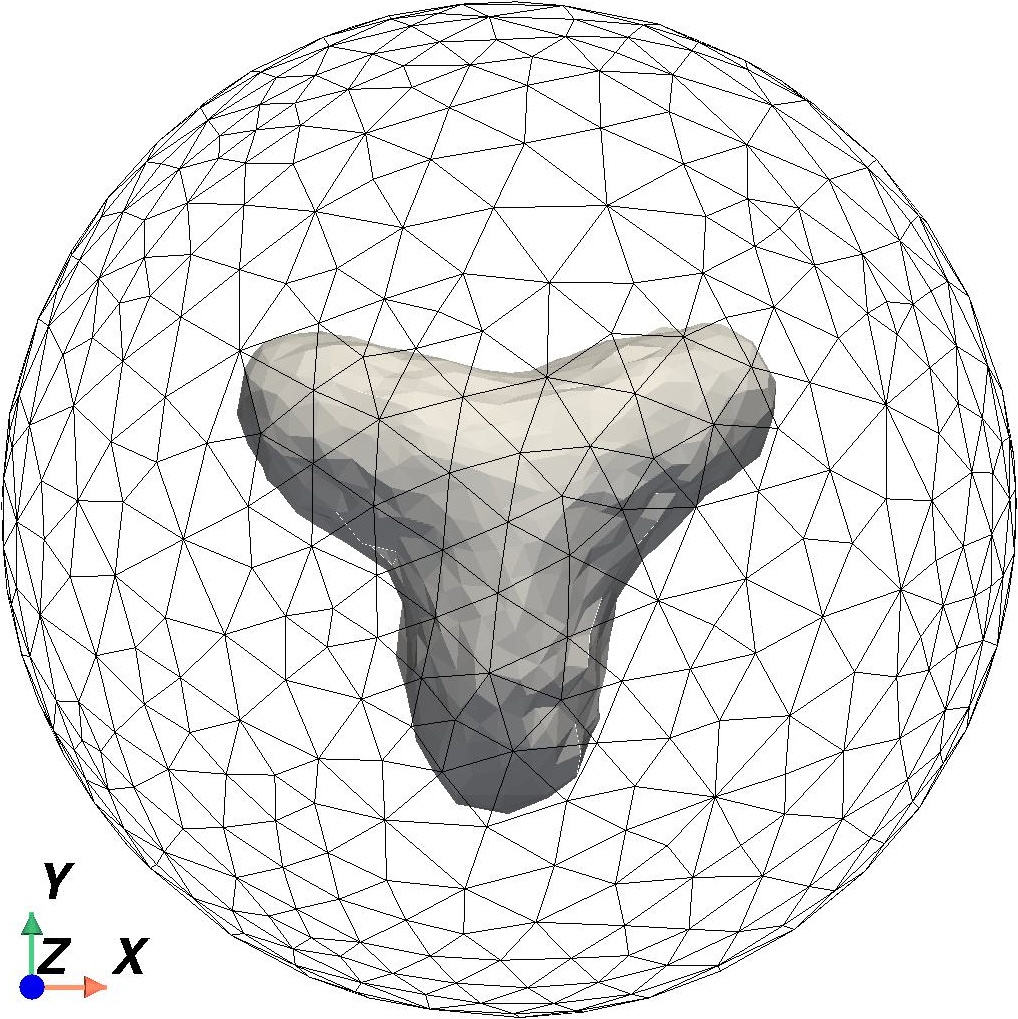}} \quad 
\resizebox{0.235\linewidth}{!}{\includegraphics{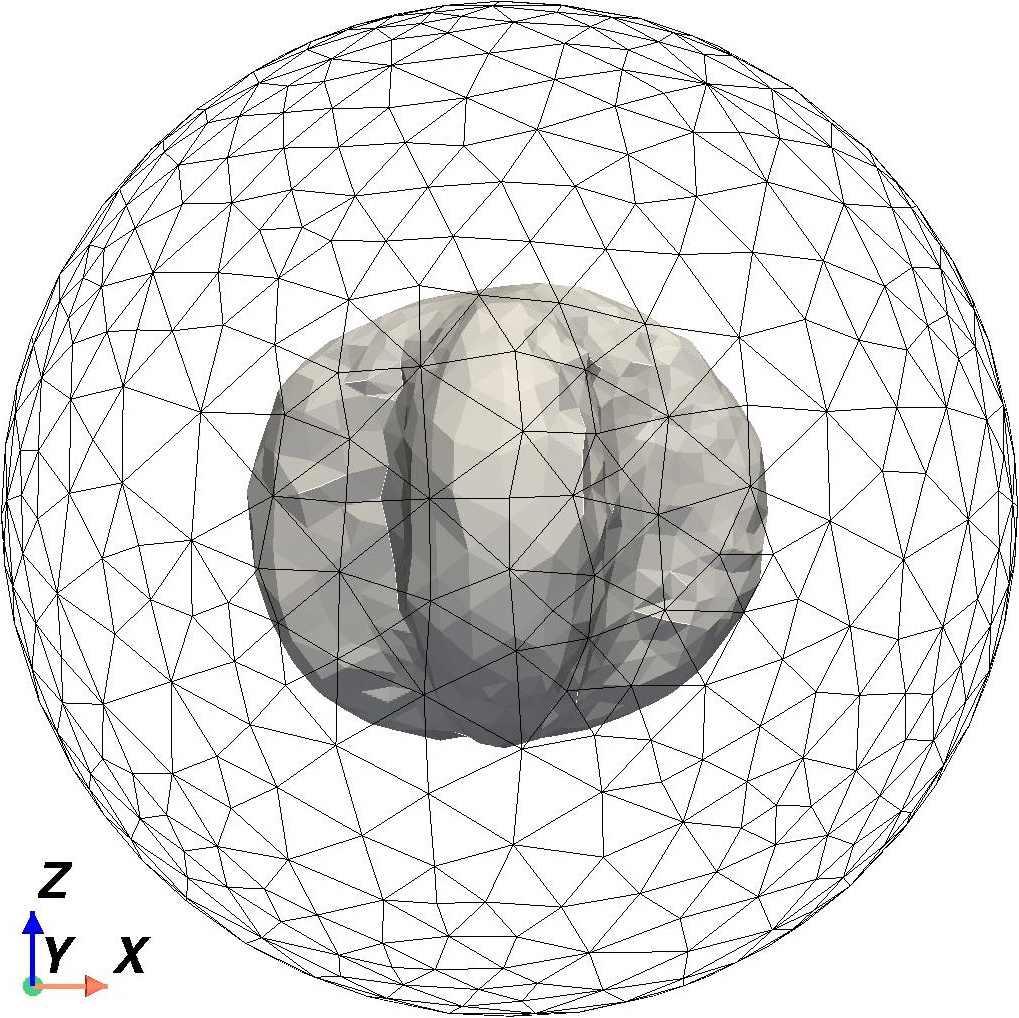}}
\caption{Exact geometry of a star-shape obstacle (top/first row) and reconstructed shapes obtained via SO (middle/second row) and ADMM (bottom/third row) with exact data.}
\label{fig:figure8}
\end{figure}

\begin{figure}[htp!]
\centering 
\resizebox{0.235\linewidth}{!}{\includegraphics{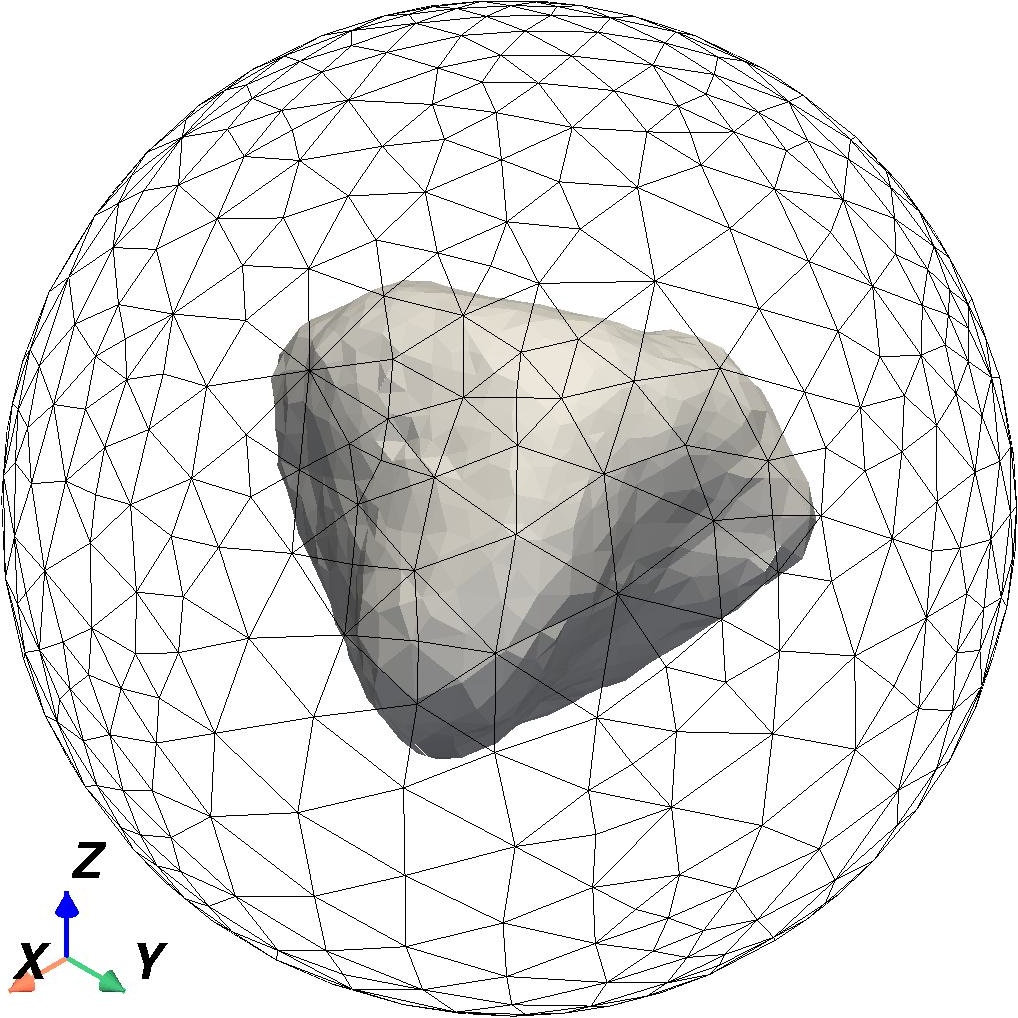}} \quad 
\resizebox{0.235\linewidth}{!}{\includegraphics{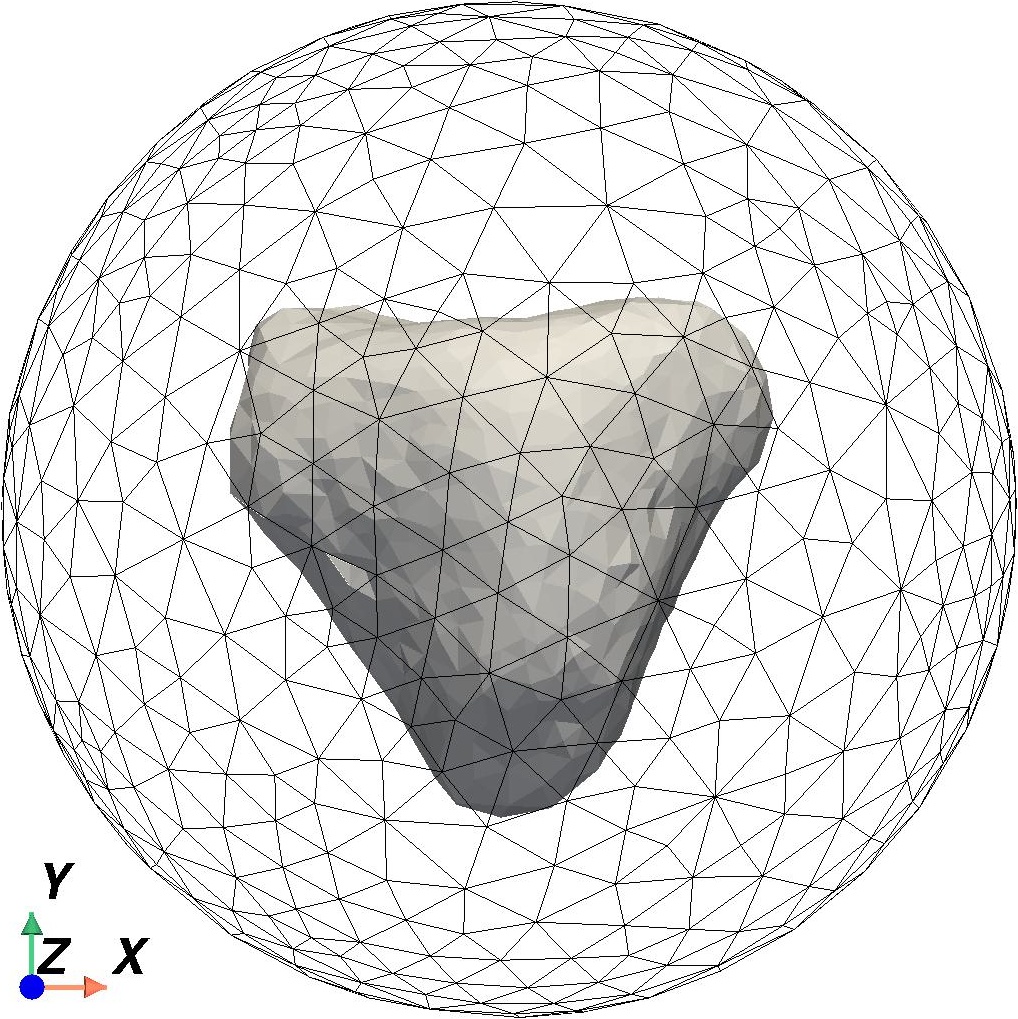}} \quad 
\resizebox{0.235\linewidth}{!}{\includegraphics{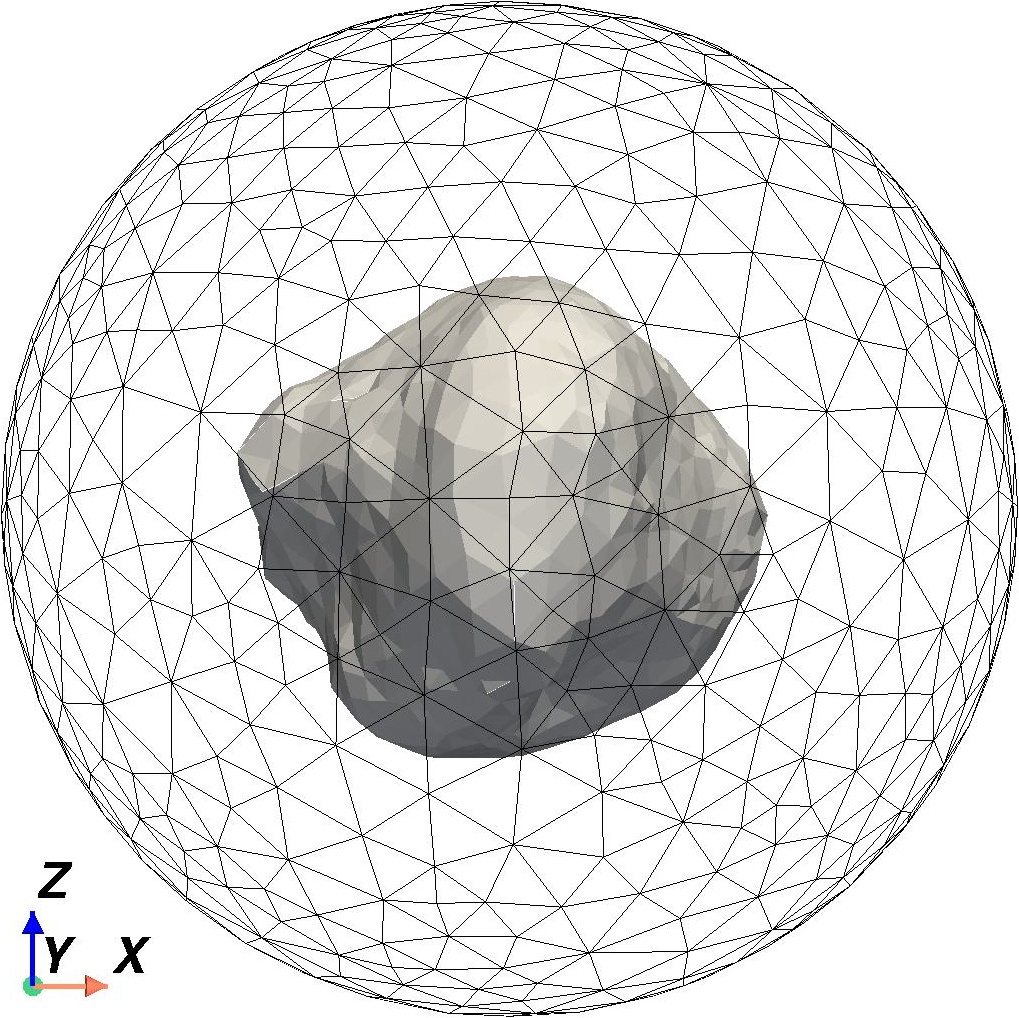}}\\[0.5em]
\resizebox{0.235\linewidth}{!}{\includegraphics{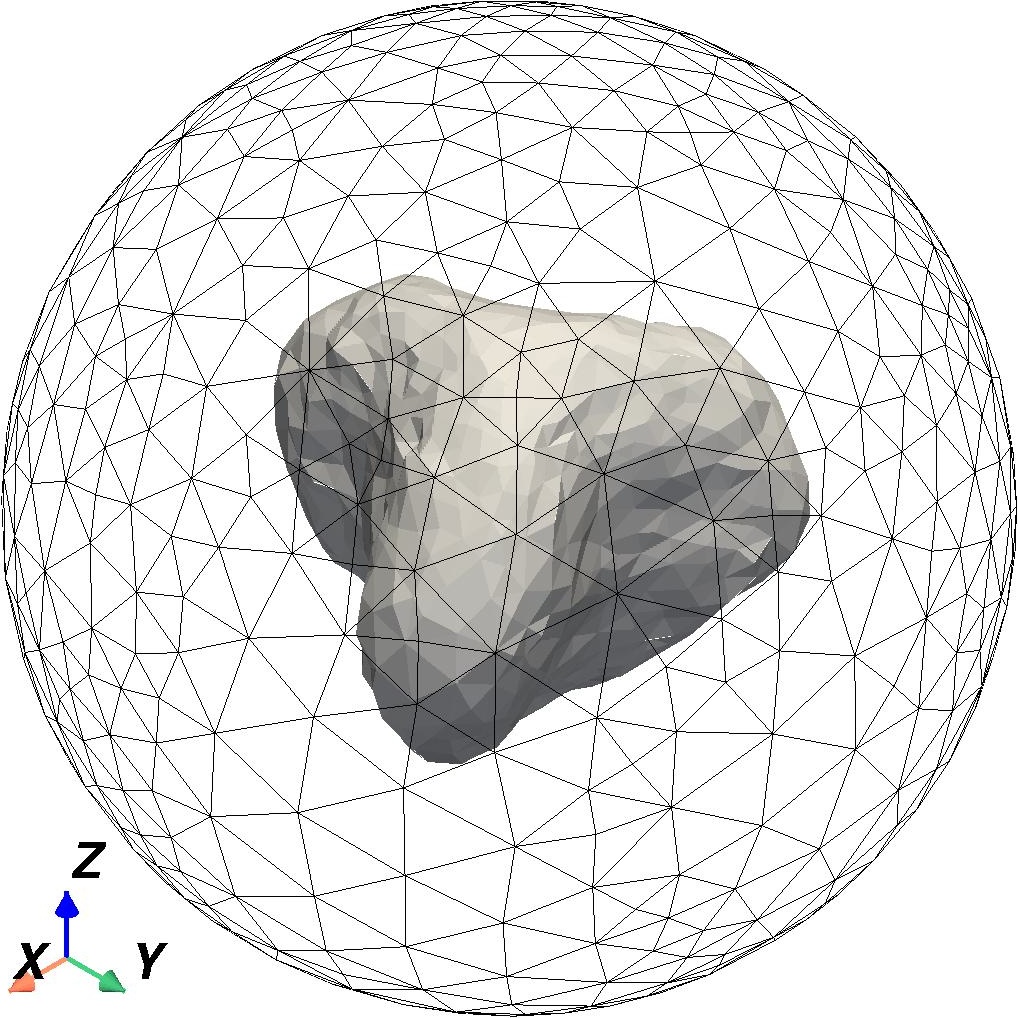}} \quad 
\resizebox{0.235\linewidth}{!}{\includegraphics{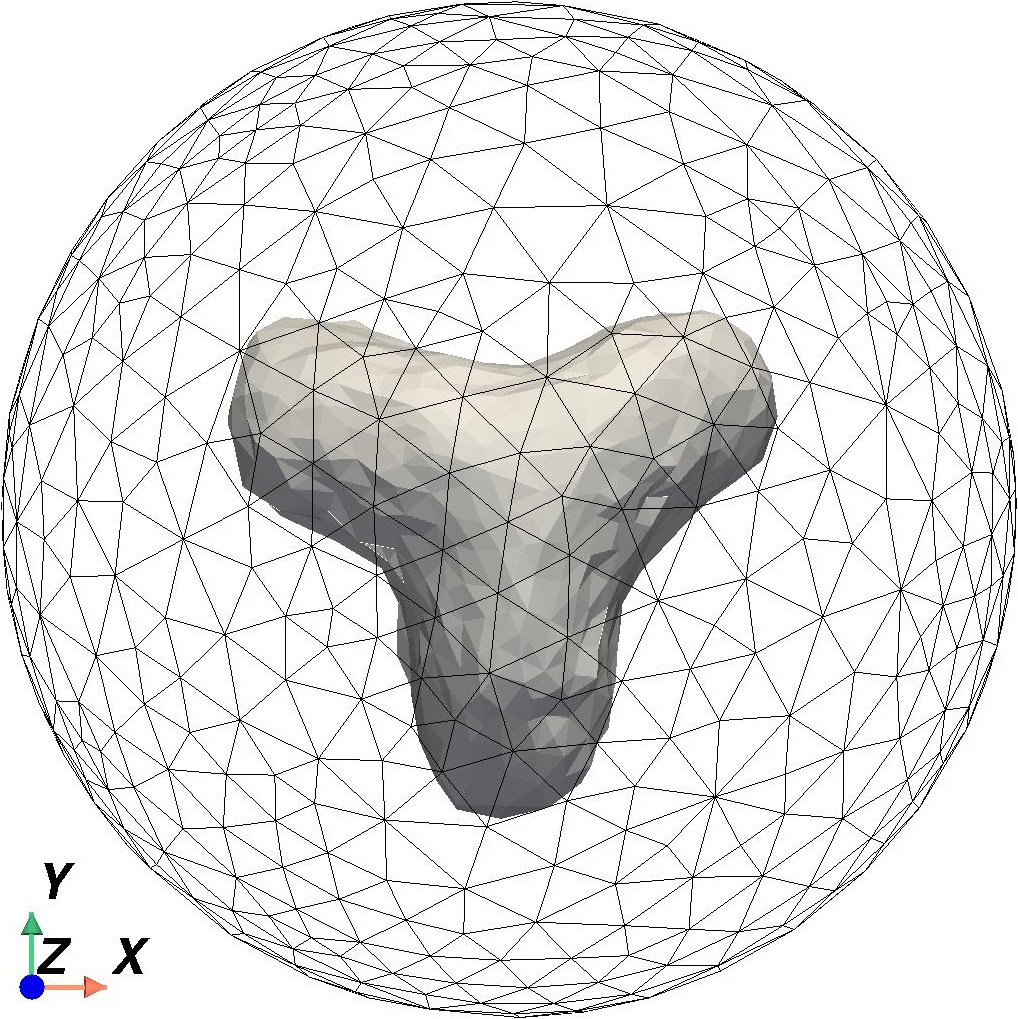}} \quad 
\resizebox{0.235\linewidth}{!}{\includegraphics{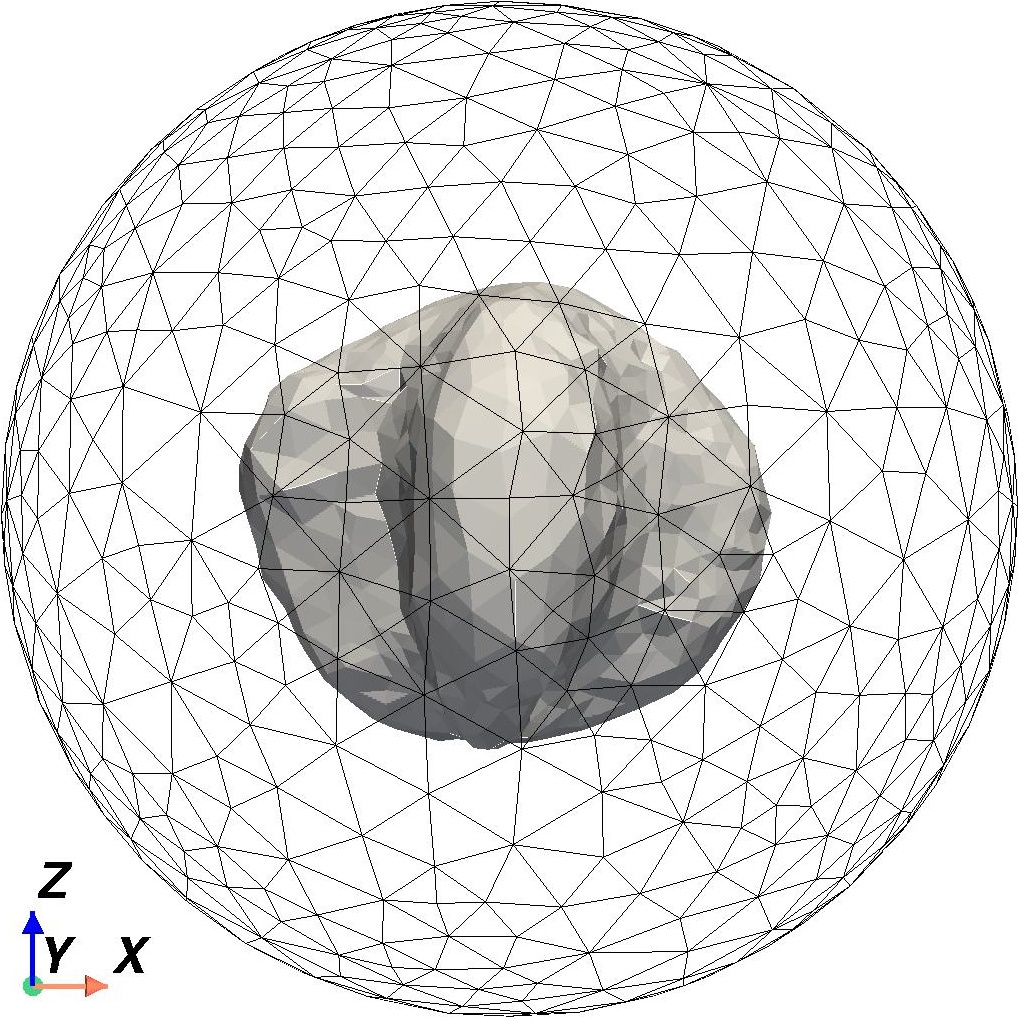}}
\caption{Reconstructed shapes obtained via SO (top row) and ADMM (bottom row) with noisy data at a $30\%$ noise level.}
\label{fig:figure9}
\end{figure}

\begin{figure}[htp!]
\centering 
\resizebox{0.235\linewidth}{!}{\includegraphics{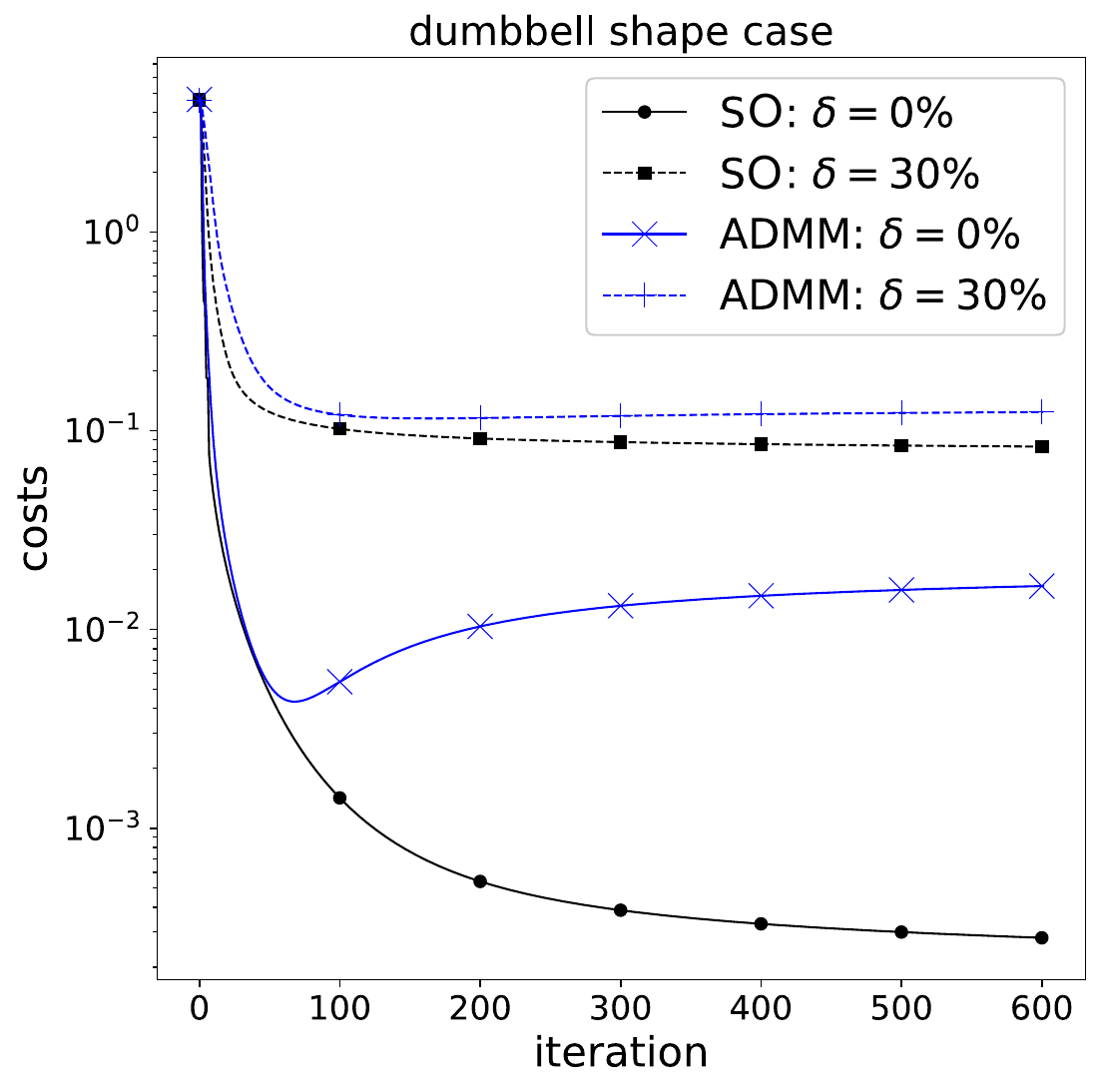}}
\resizebox{0.235\linewidth}{!}{\includegraphics{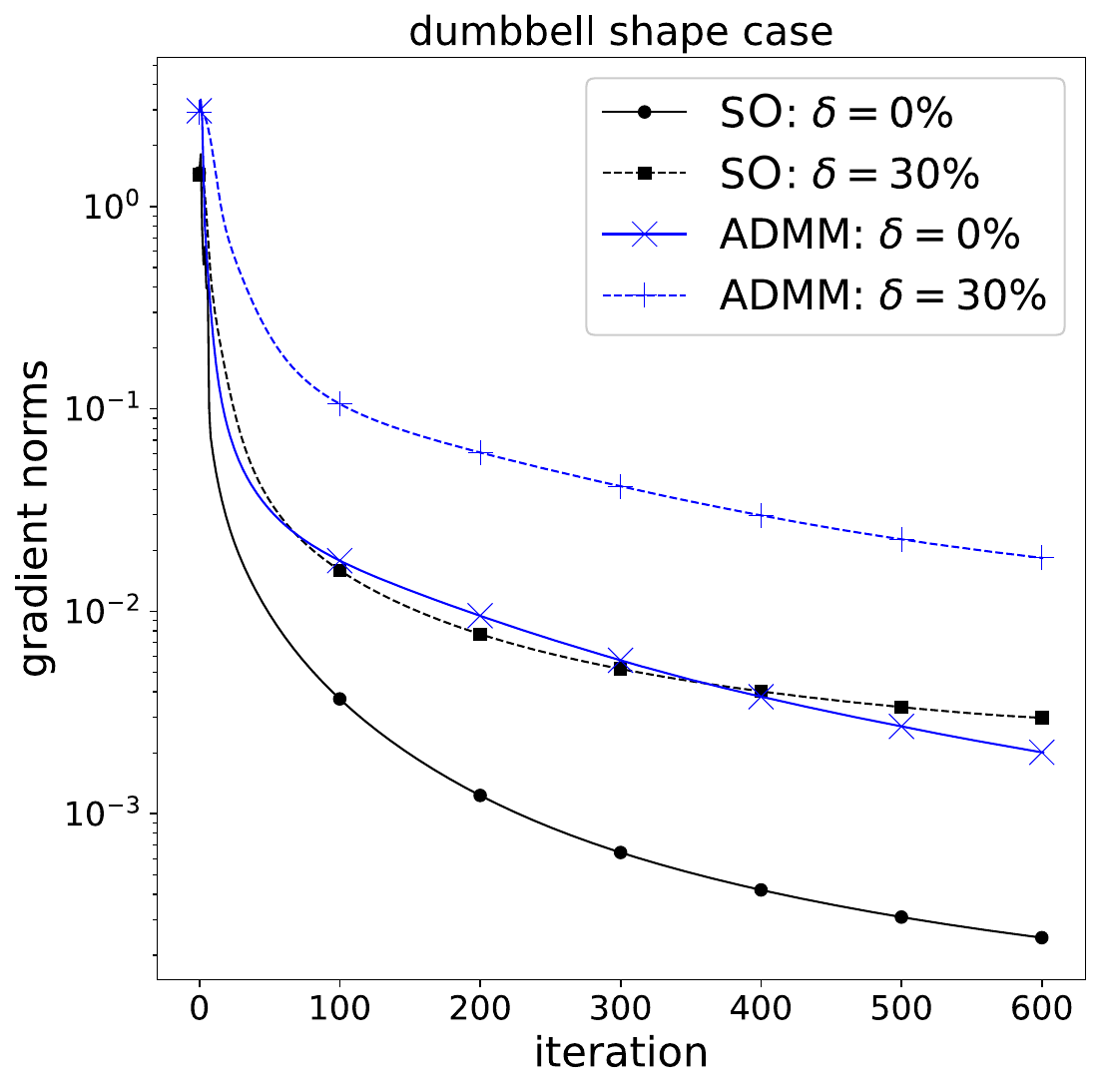}} \quad
\resizebox{0.235\linewidth}{!}{\includegraphics{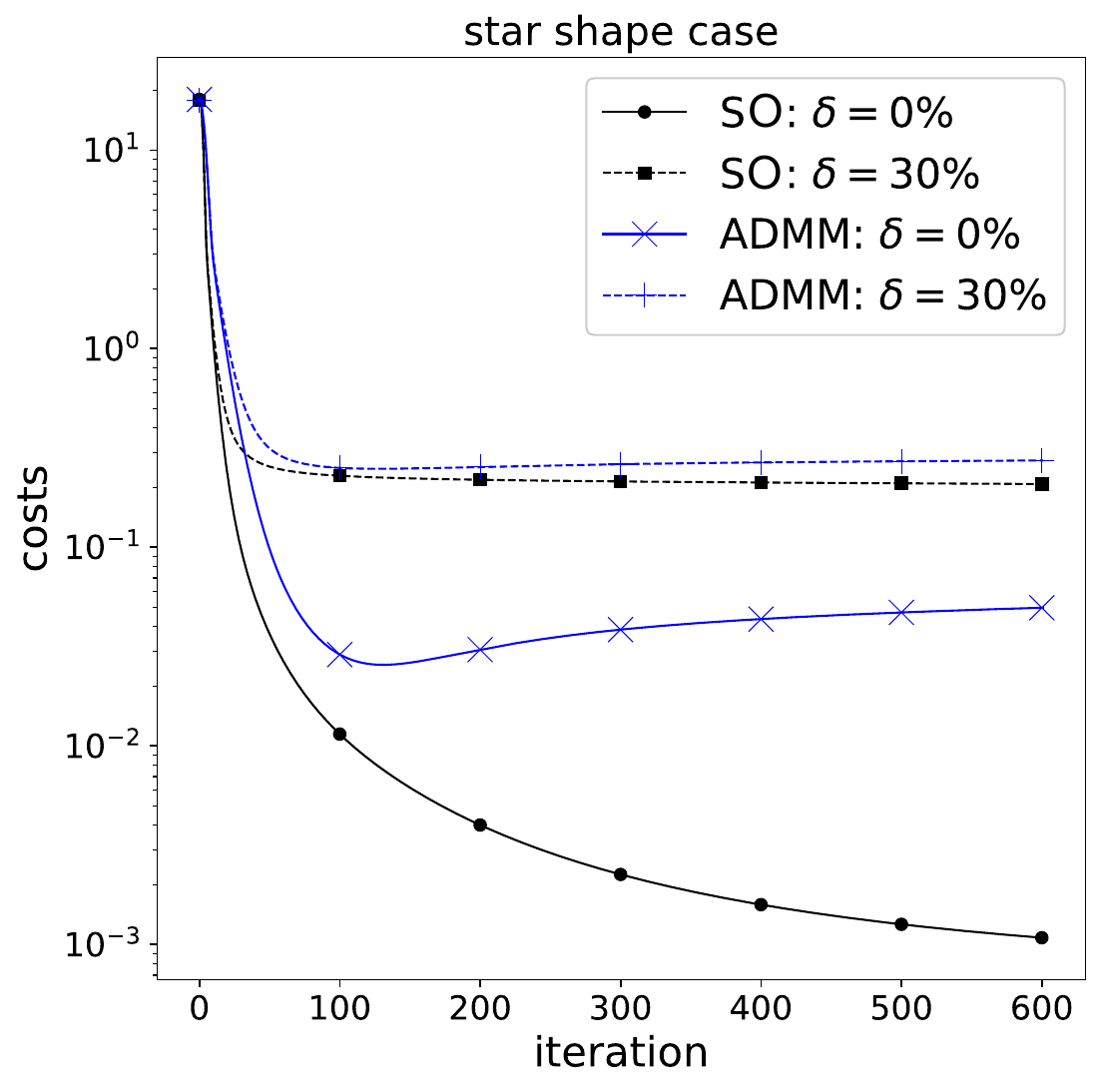}}
\resizebox{0.235\linewidth}{!}{\includegraphics{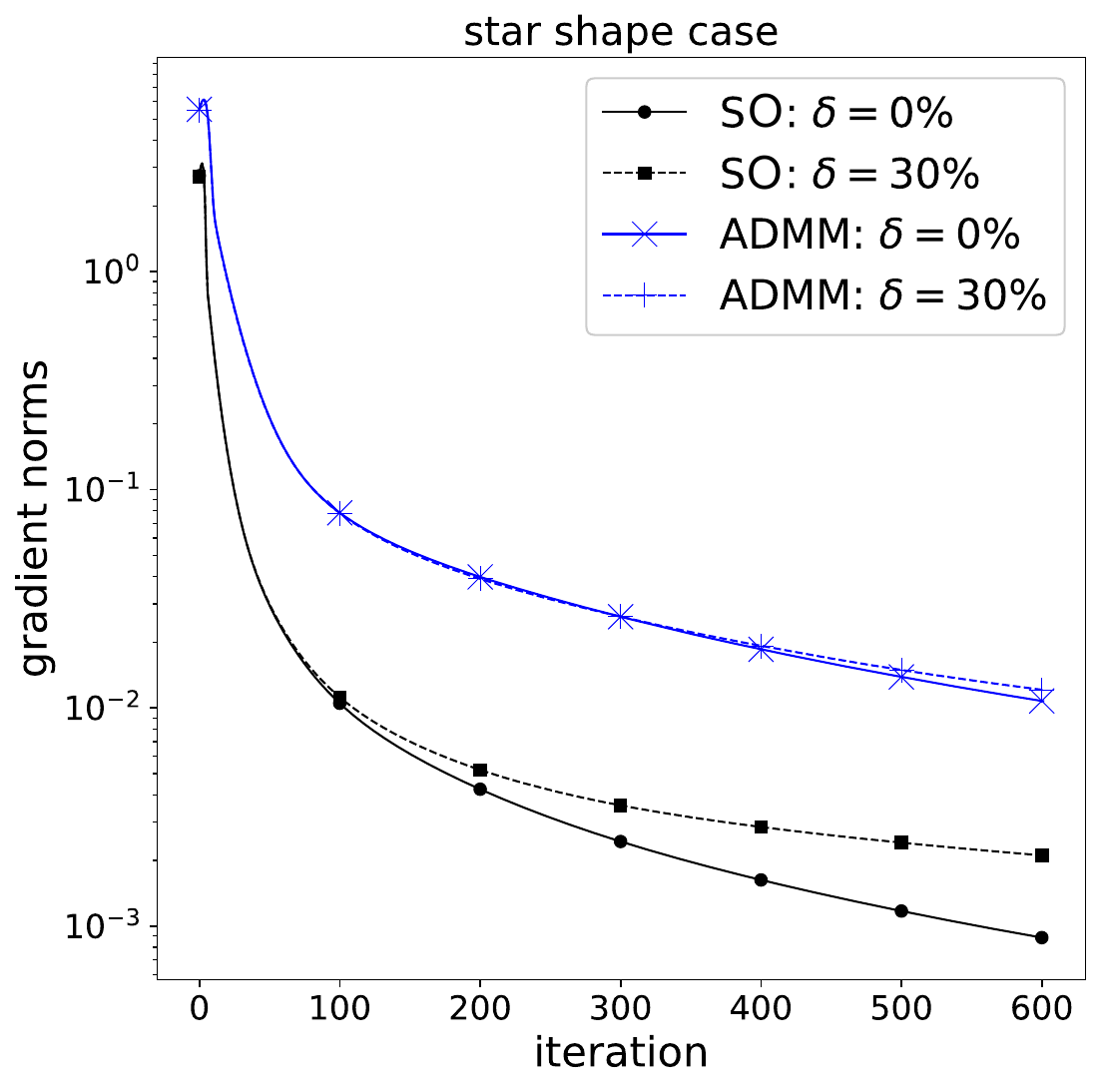}}
\caption{Histories of cost values and gradient norms.}
\label{fig:figure10}
\end{figure}

\subsection{Numerical experiments in 3D via ADMM with space-dependent diffusion matrix} \label{subsec:ADMM_3D_numerics_general_case}

Finally, we explore a broader scenario where instead of a diffusion coefficient, a diffusion matrix dependent on space is utilized. 
We comment here that the expression for the shape derivative remains unchanged; however, \ferj{assumptions in \eqref{eq:assumptions} need to be modified accordingly for technical reasons} and the specific details of the calculation differ. For instance, it should be noted that $\sigma\dn{u}$ should be computed as $(\sigma\nabla{u}) \cdot \nn$.
These differences are omitted here, as the underlying argument closely follows that of Section \ref{sec:shape_derivatives}.

The computational setup to solve the present case mirrors the previous subsection, except for $\sigma(x) = (\sigma_{ij}(x))_{ij} \in L^{\infty}(D)^{3 \times 3}$, $1 \leqslant i, j \leqslant d$, where $\sigma_{ij} = 0$ if $i \neq j$, $\sigma_{11} = 2 - 0.5\cos{(\arctan(x_{2}/x_{1}))}$, $\sigma_{22} = 2 + 0.5\sin{(\arctan(x_{2}/x_{1}))}$, and $\sigma_{33} = 2 + 0.5\sin{(\pi x_{1})} \sin{(\pi x_{2})}$. 
Furthermore, we assume that the data is corrupted with $\delta = 30\%$ noise.

The figures presented in Figure~\ref{fig:figure11} and Figure~\ref{fig:figure12} depict the numerical results of the current experiment. 
As expected, even in the general case, ADMM exhibits superior accuracy over SO, successfully reconstructing unknown obstacles amidst significant data noise. 
Remarkably, as evident in the plots shown in Figure~\ref{fig:figure11} and Figure~\ref{fig:figure12}, ADMM further distinguishes itself by accurately detecting the concave features of these obstacles even under considerable noise levels, thereby emphasizing its pronounced superiority over SO.
%
%
\begin{figure}[htp!]
\centering 
\resizebox{0.235\linewidth}{!}{\includegraphics{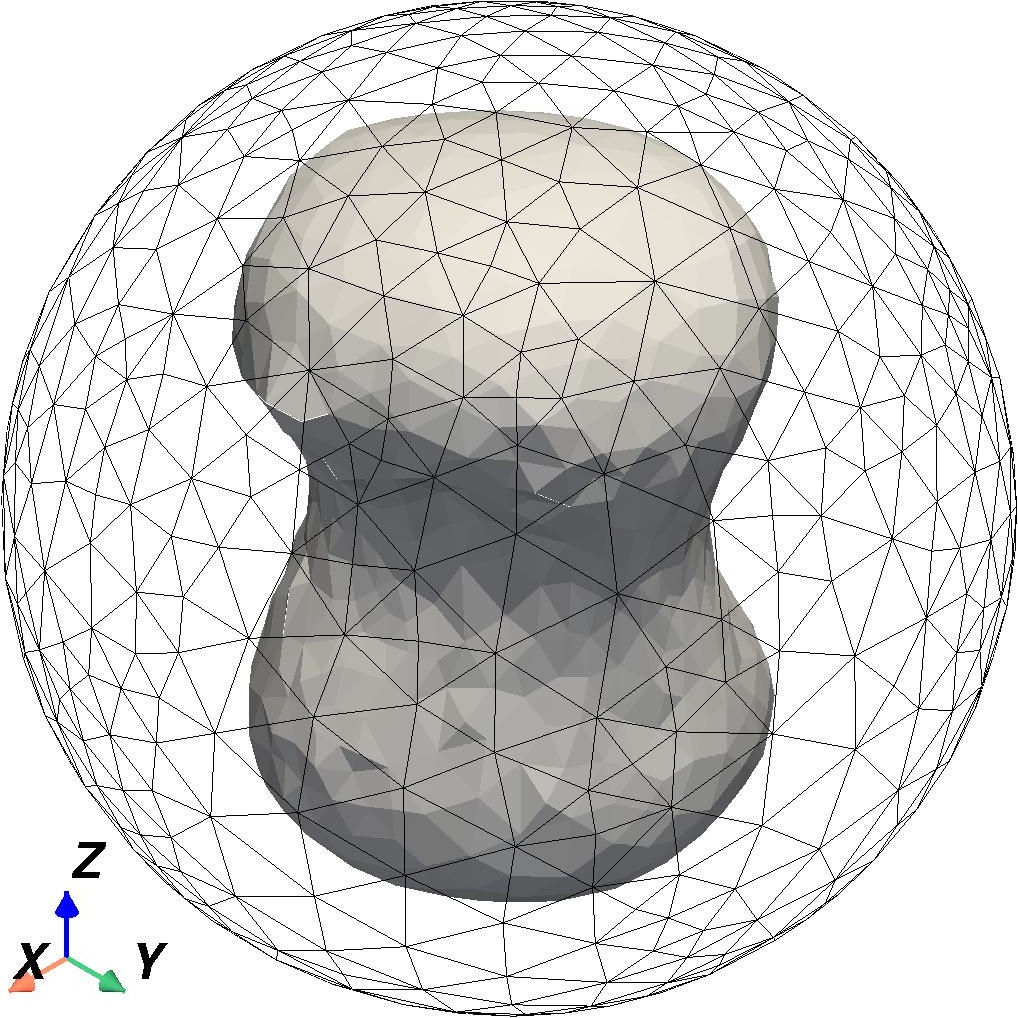}} \quad 
\resizebox{0.235\linewidth}{!}{\includegraphics{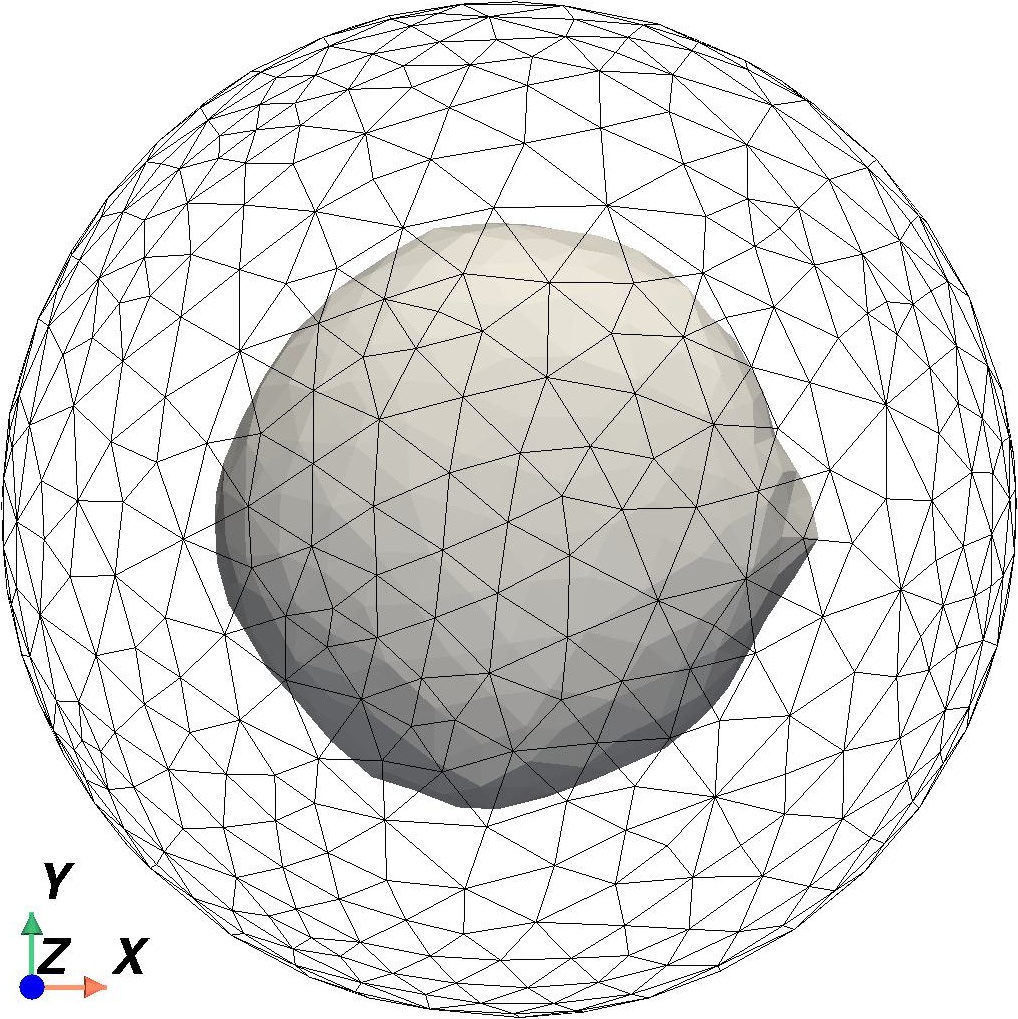}} \quad 
\resizebox{0.235\linewidth}{!}{\includegraphics{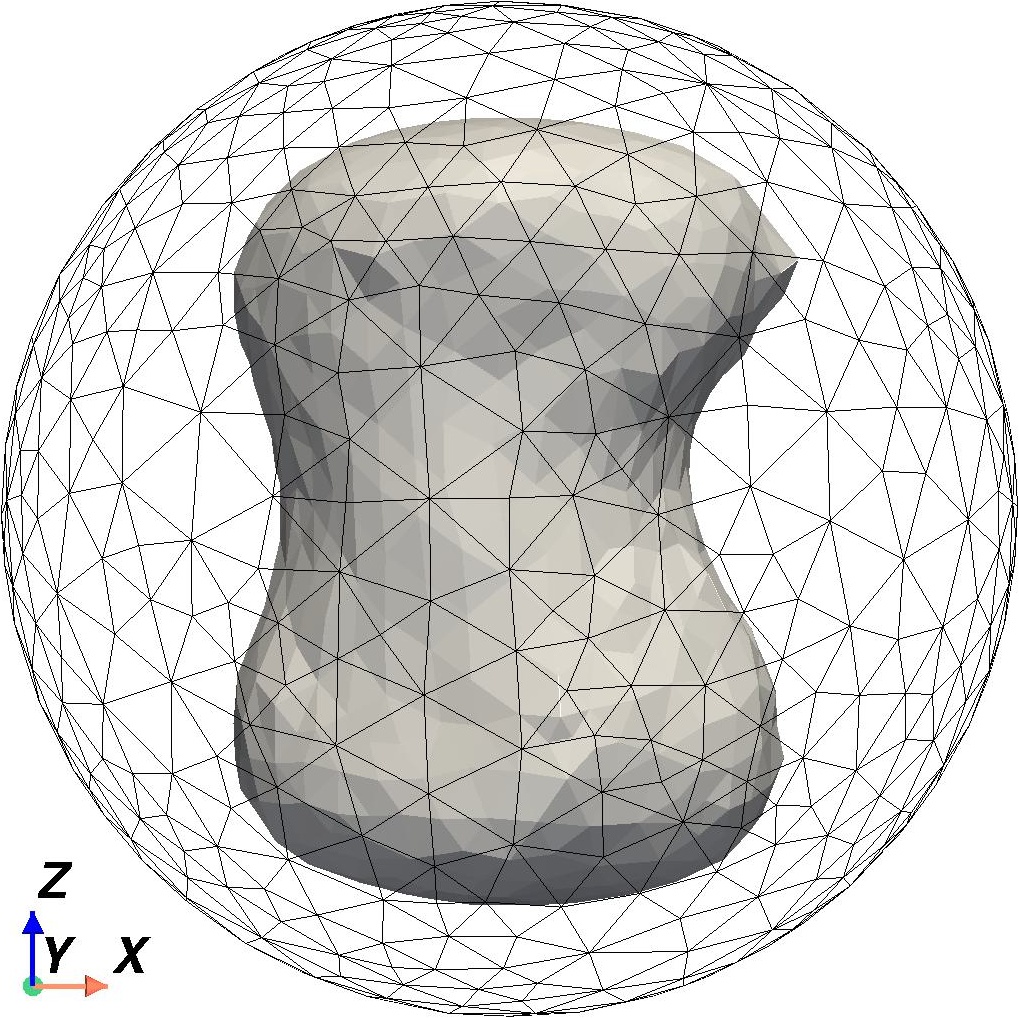}}\\[0.5em]
\resizebox{0.235\linewidth}{!}{\includegraphics{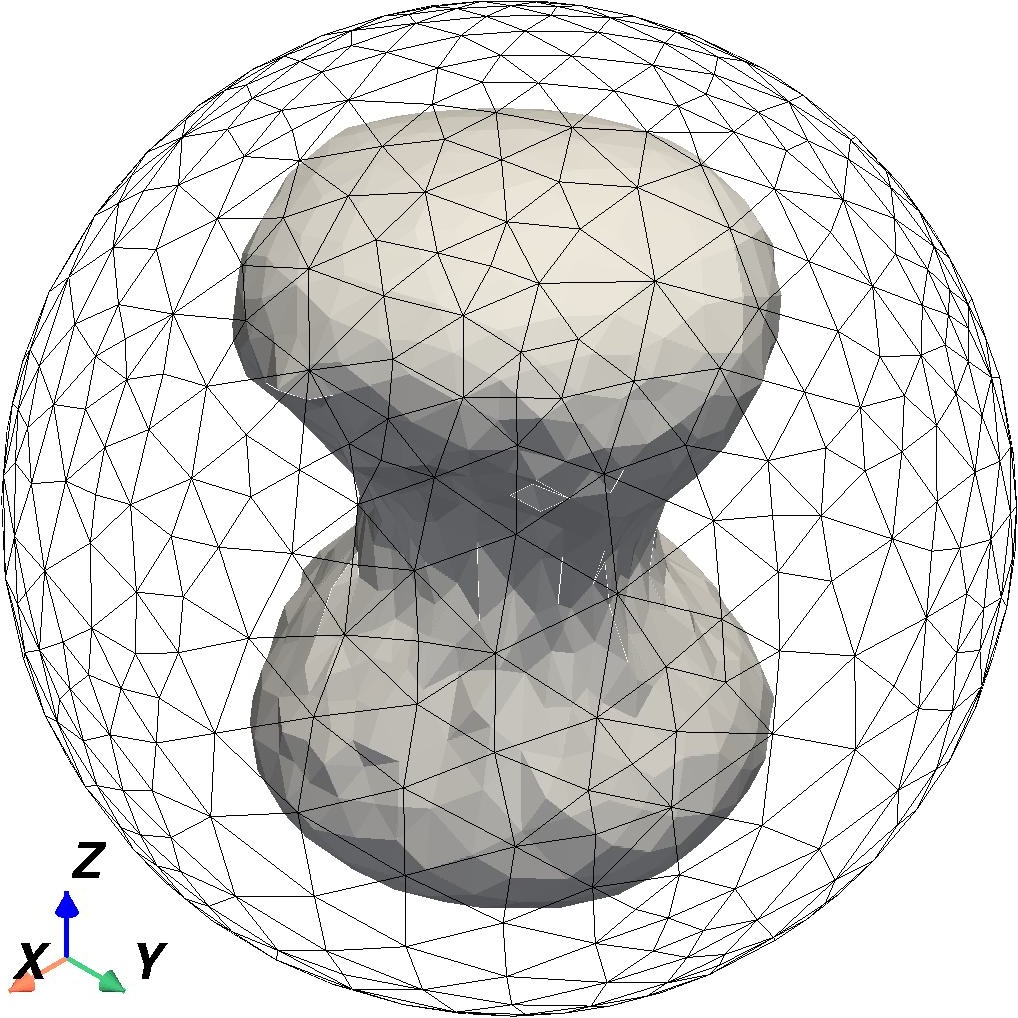}} \quad 
\resizebox{0.235\linewidth}{!}{\includegraphics{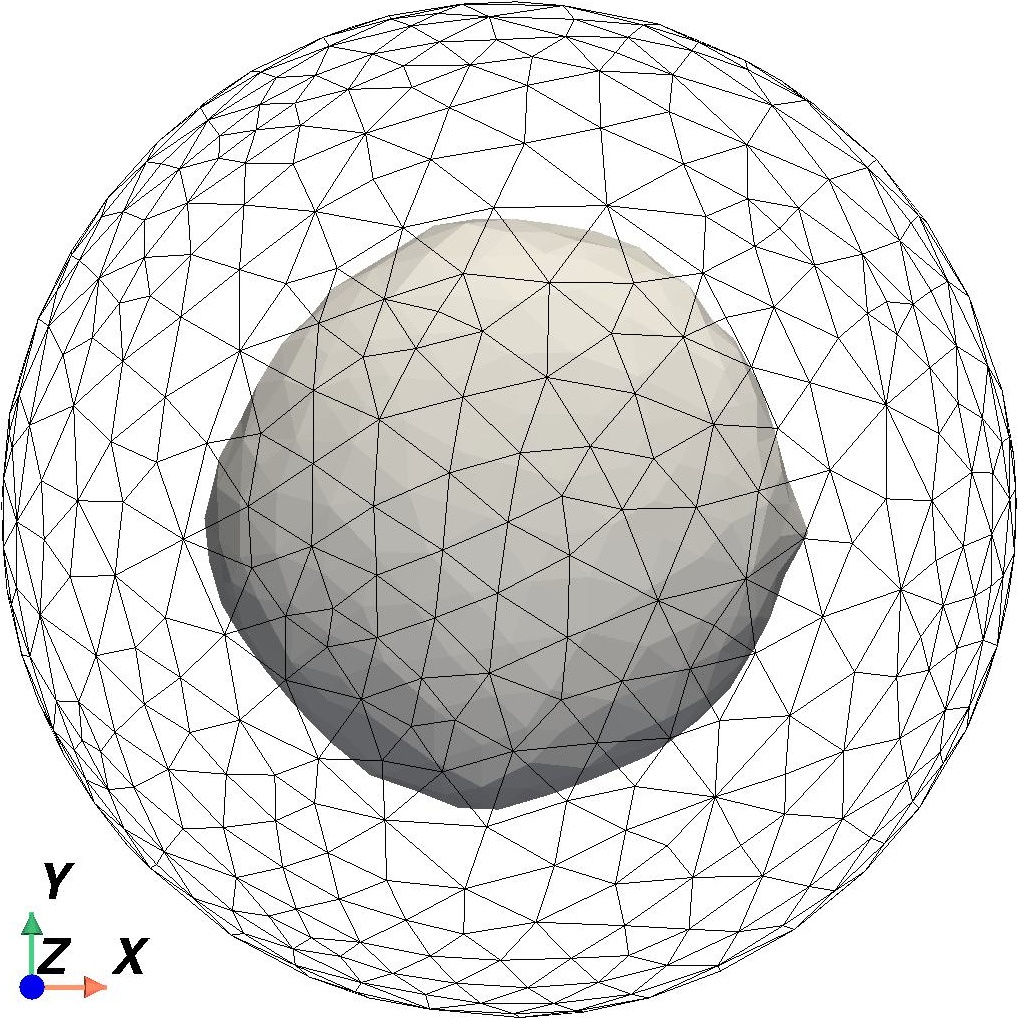}} \quad 
\resizebox{0.235\linewidth}{!}{\includegraphics{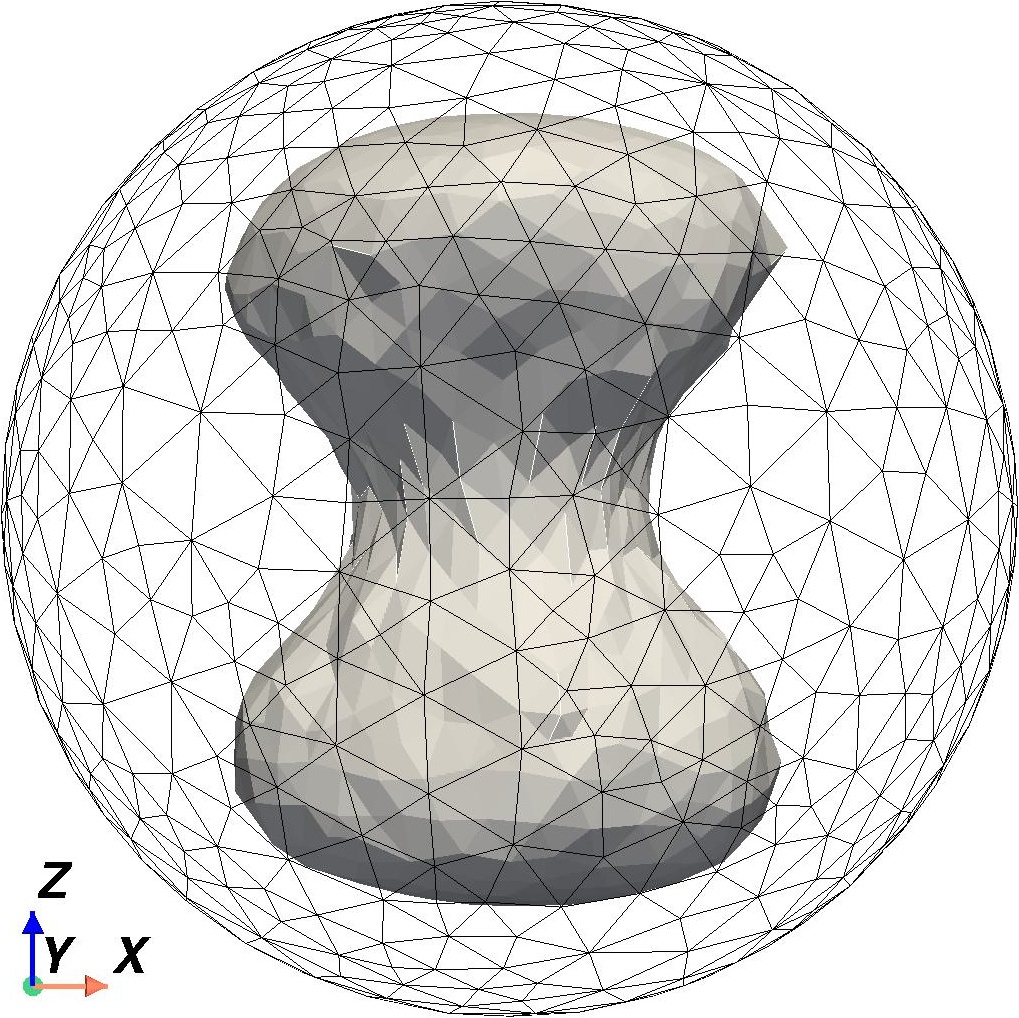}}
\caption{Reconstructed shapes obtained via SO (top row) and ADMM (bottom row) with noisy data at a $30\%$ noise level.}
\label{fig:figure11}
\end{figure}

\begin{figure}[htp!]
\centering 
\resizebox{0.235\linewidth}{!}{\includegraphics{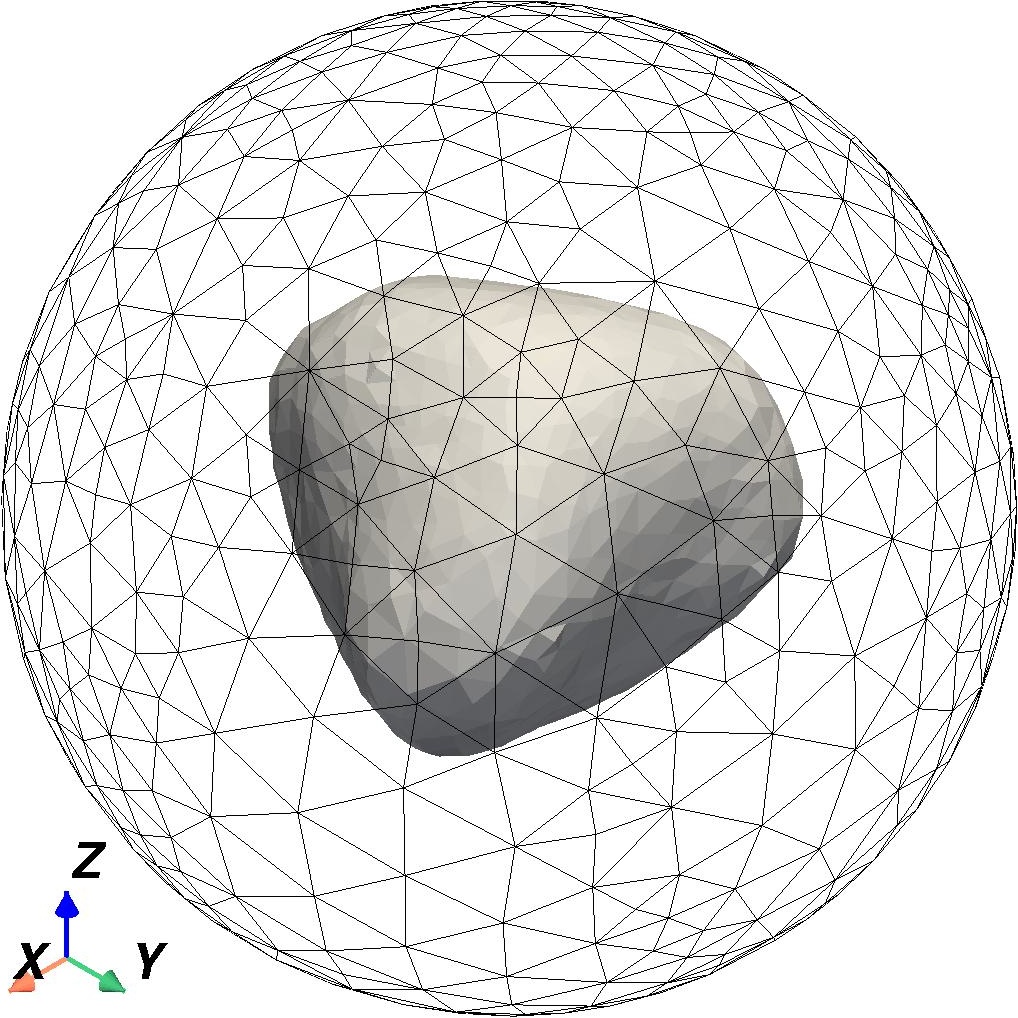}} \quad 
\resizebox{0.235\linewidth}{!}{\includegraphics{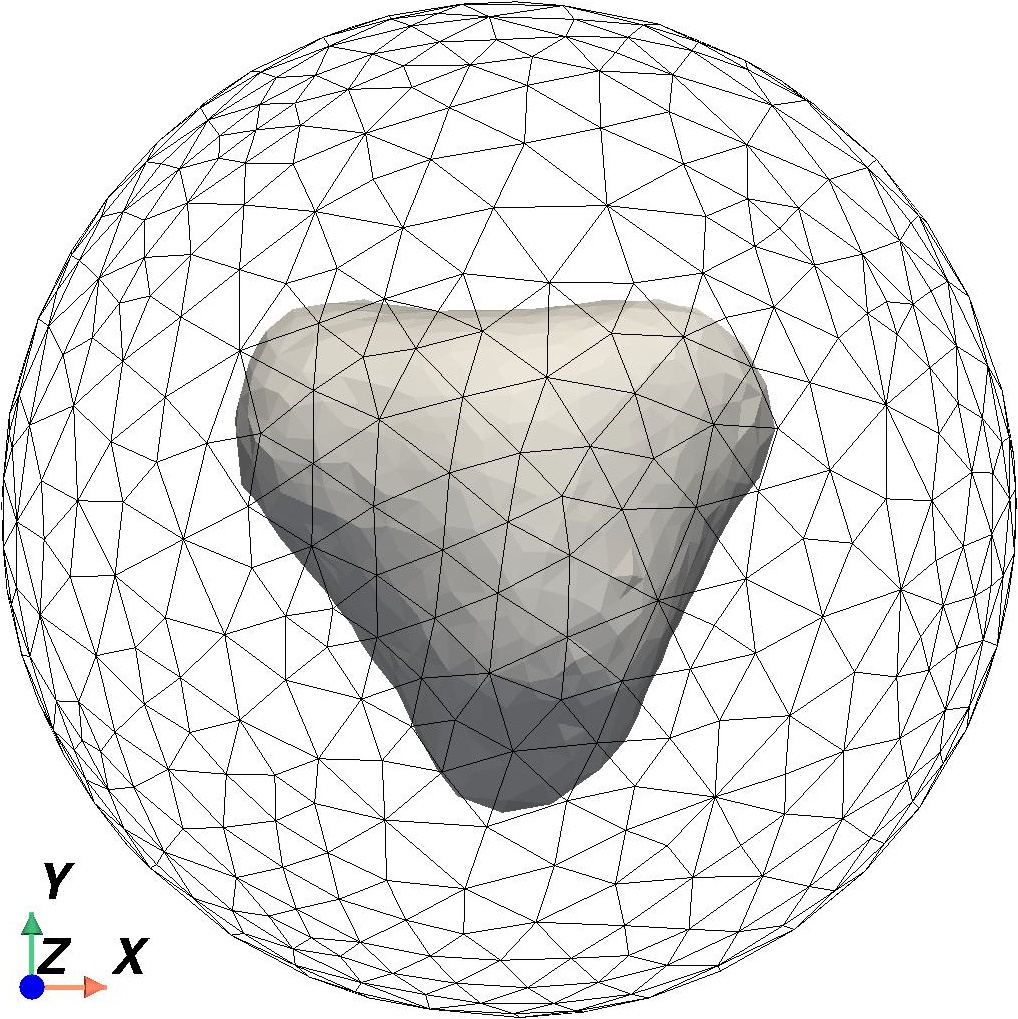}} \quad 
\resizebox{0.235\linewidth}{!}{\includegraphics{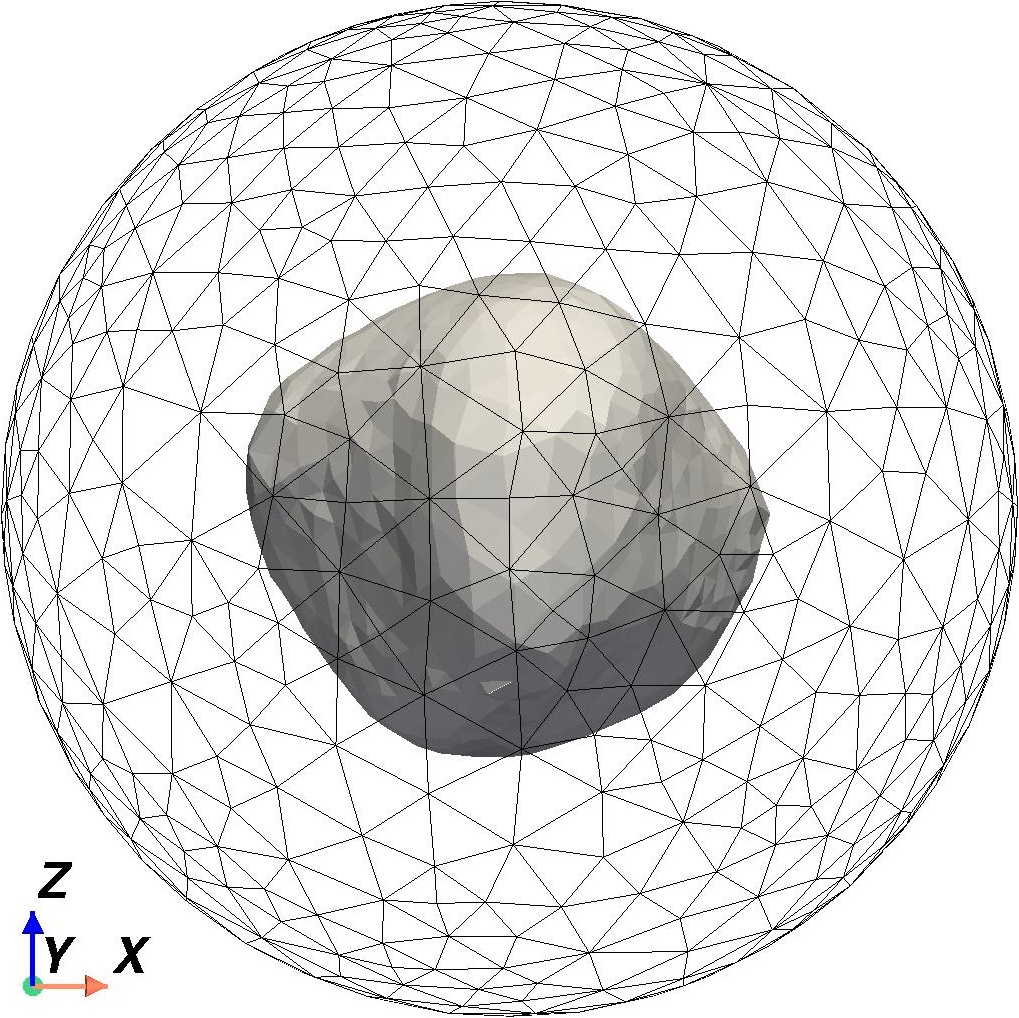}}\\[0.5em]
\resizebox{0.235\linewidth}{!}{\includegraphics{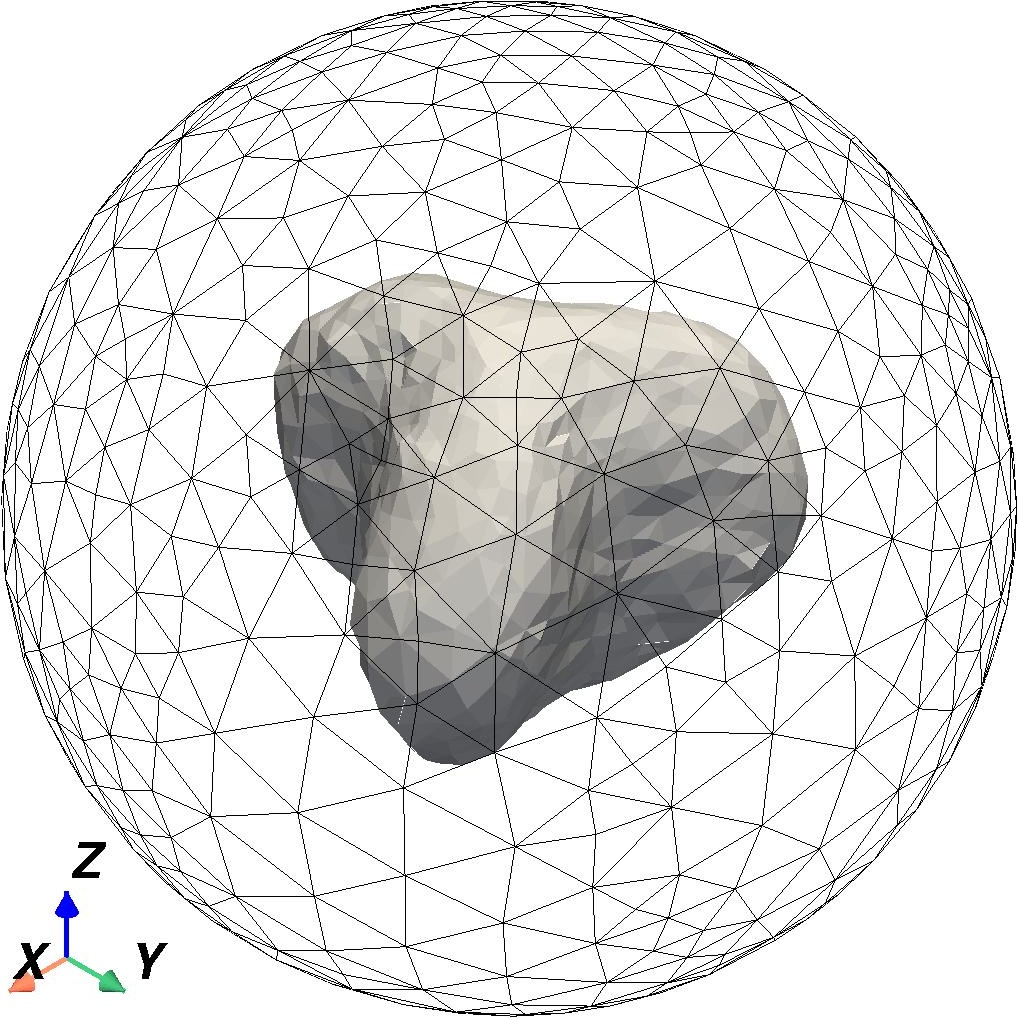}} \quad 
\resizebox{0.235\linewidth}{!}{\includegraphics{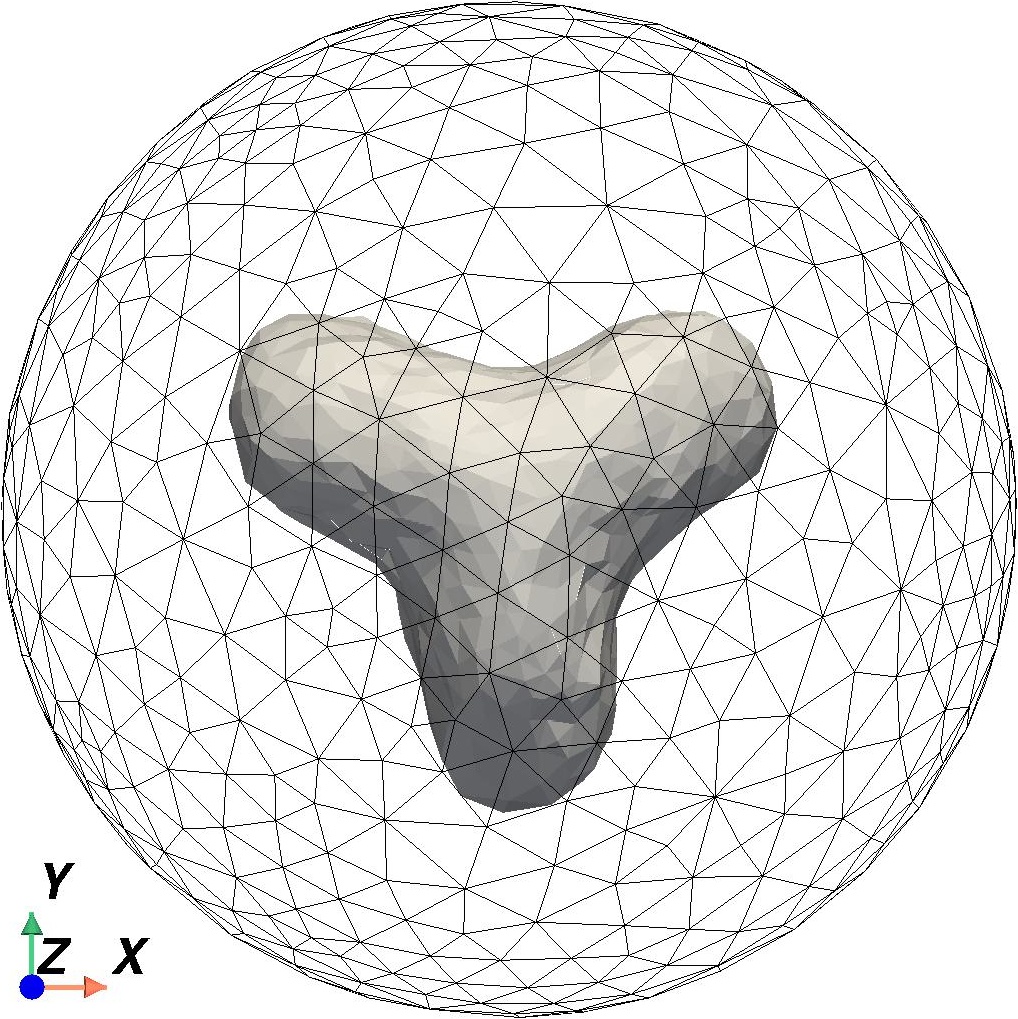}} \quad 
\resizebox{0.235\linewidth}{!}{\includegraphics{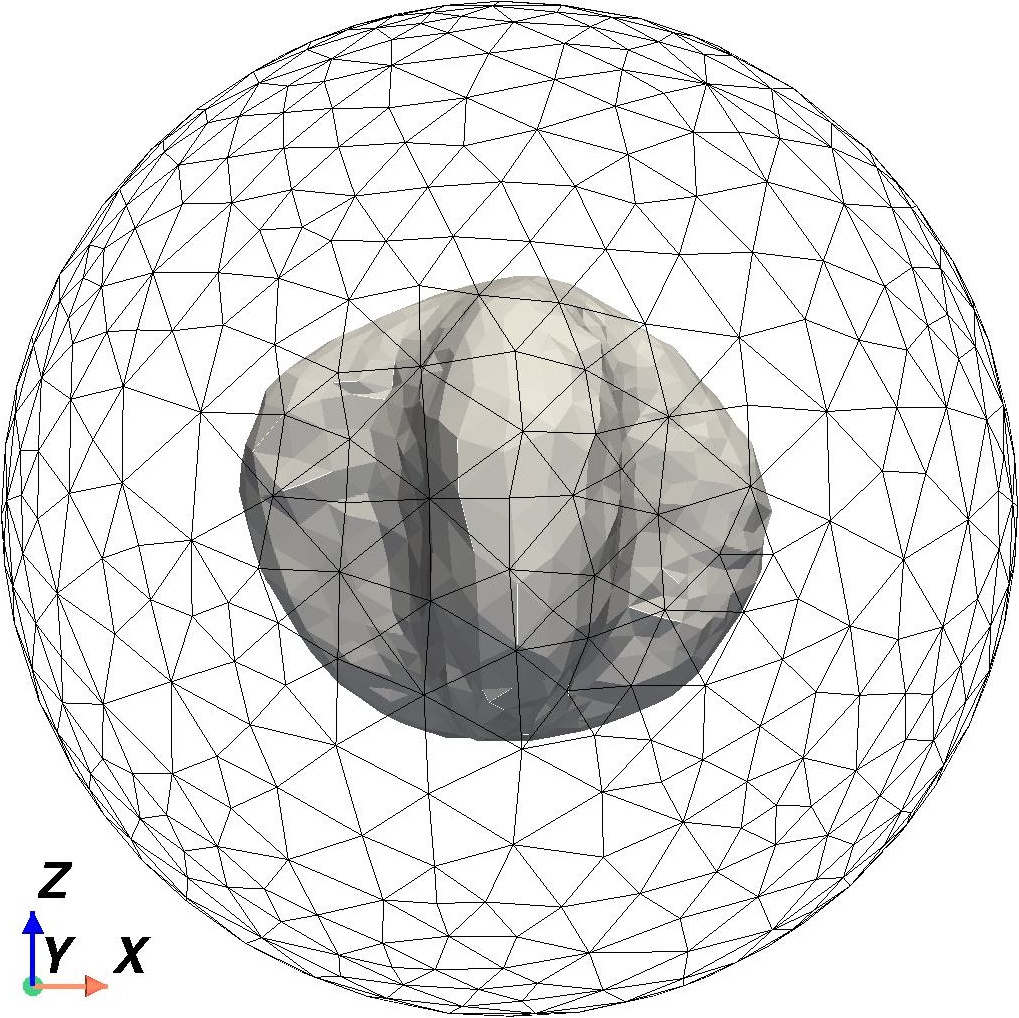}}
\caption{Reconstructed shapes obtained via SO (top row) and ADMM (top row) with noisy data at a $30\%$ noise level.}
\label{fig:figure12}
\end{figure}

\section{Conclusion}\label{sec:conclusion}
In this paper, we investigated a shape inverse problem for the advection–diffusion equation with spatially varying coefficients.
Within a shape optimization framework, we aimed to reconstruct an unknown obstacle from boundary measurements.
We considered two objective functions, $J_D$ and $J_N$, established their differentiability, and derived the corresponding shape gradients using the adjoint method.
Numerical reconstructions were carried out using the alternating direction method of multipliers (ADMM) combined with a Sobolev gradient descent approach in a finite element setting.
The results demonstrated accurate reconstructions of various obstacle shapes, even in the presence of noise.
In particular, ADMM improved the detection of concavities, especially in cases with constant diffusion and spatially varying advection.

\section*{Acknowledgements}
We would like to express our sincere gratitude to the two reviewers for their valuable feedback and insightful suggestions, which greatly contributed to improving the quality of this work. We also thank the Editor for their careful handling of the manuscript.
JFTR is supported by the JSPS Postdoctoral Fellowships for Research in Japan, and partially by the JSPS Grant-in-Aid for Early-Career Scientists Grant Number JP23K13012 and the JST CREST Grant Number JPMJCR2014.
%
%
%
%
%
\section*{Conflict of interest}
The authors declare that they have no conflict of interest.
%
%
%
%
%
%
\bibliographystyle{alpha} 
\bibliography{main}   
%
%
%
\appendix
\section{Differentiability of the state variables}\label{appx:differentiability_of_the_state}
In these proofs, we streamline notation by omitting the subscript ${}_{N}$. 
Furthermore, we introduce a generic constant $c>0$, which remains independent of $t$ and may assume different values in varying contexts. 
Lemma~\ref{lem:continuity_and_coercivity_of_at} shows the continuity and coercivity of $a^t$. Lemma~\ref{lem:solution_of_transformed_perturbed_problem} describes the solution in the transformed perturbed domain while Lemma~\ref{Lem:IFT} establishes that $u^t$ is of class $C^1$ in a neighborhood of 0. Subsequently, by applying the implicit function theorem, we demonstrate the existence of the material derivative.

\begin{lemma}\label{lem:continuity_and_coercivity_of_at}
	Given the assumptions in \eqref{eq:assumptions}, assume that $\abs{\bb}_{\infty}$ is sufficiently small with   
 $\bb~\circ~T_{t}~=~ \bb^t$ and $\sigma \circ T_{t} = \sigma^{t}$.
	Then, the map $a^{t}: \interval \times \HGamma \times \HGamma \to \mathbb{R}$ given by
	\[
		(t,\varphi,\psi) \quad \longmapsto\quad \intO{\sigma^{t} A_{t} \nabla \varphi \cdot \nabla \psi} +\intO {  \bb^t \cdot C_{t}\nabla \varphi \psi}
	\]
	is a continuous and coercive bilinear form on $\HGamma \times \HGamma$ which satisfies
	\[
		a^{t}(\varphi,\varphi) \geqslant c \|\varphi\|_{\HGamma}^{2},
	\]
	for some positive constant $c:=c(\Omega)$ that is independent of $t$. 
\end{lemma}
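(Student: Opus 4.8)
The plan is to verify the three defining properties of a coercive bilinear form — bilinearity, boundedness, and coercivity — while tracking carefully that every multiplicative constant can be chosen independently of $t \in \interval$. Bilinearity is immediate, since for fixed $t$ the integrand depends linearly on $(\nabla\varphi,\varphi)$ and on $(\nabla\psi,\psi)$. The only substantial work lies in the two estimates, and the key ingredients will be the uniform bounds on $A_t$, $\dd_t$, and $C_t$ collected in \eqref{eq:bounds}, the fact that $L^\infty$ norms are preserved (up to the inclusion $T_t(\Omega) \subset D$) under composition with the diffeomorphism $T_t$, the Cauchy--Schwarz inequality, and the Poincar\'e inequality $\norm{\varphi}_{L^2(\Omega)} \le c_\Omega \norm{\varphi}_{\HGamma}$, valid on $\HGamma$ because $\varphi = 0$ on $\Gamma$.

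For boundedness I would estimate the two integrals separately. Since $A_t$ is symmetric positive definite with largest eigenvalue at most $\constant_2$ by \eqref{eq:bounds}, and $\abs{\sigma \circ T_t}_{L^\infty(\Omega)} \le \abs{\sigma}_{L^\infty(D)}$, Cauchy--Schwarz gives
\[
	\abs{\intO{\sigma^{t} A_t \nabla\varphi \cdot \nabla\psi}} \le \abs{\sigma}_{\infty}\,\constant_2\, \norm{\varphi}_{\HGamma}\norm{\psi}_{\HGamma}.
\]
For the advection term, using $\abs{\bb^{t}}_{L^\infty(\Omega)} \le \abs{\bb}_{L^\infty(D)}$, the bound $\sup_{t\in\interval}\abs{C_t}_{\infty} \le \constant_6$, and then Poincar\'e,
\[
	\abs{\intO{\bb^{t} \cdot C_t \nabla\varphi\, \psi}} \le \abs{\bb}_{\infty}\,\constant_6\, \norm{\varphi}_{\HGamma}\norm{\psi}_{L^2(\Omega)} \le \abs{\bb}_{\infty}\,\constant_6\, c_\Omega\, \norm{\varphi}_{\HGamma}\norm{\psi}_{\HGamma}.
\]
Adding the two yields continuity with constant $M = \abs{\sigma}_{\infty}\constant_2 + \abs{\bb}_{\infty}\constant_6 c_\Omega$, manifestly independent of $t$.

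For coercivity I would test with $\psi = \varphi$. Because $\sigma^{t}(x) = \sigma(T_t(x)) \ge \sigma_0$ for a.e.\ $x \in \Omega$ (ellipticity is inherited since $T_t$ maps $\Omega$ into $D$) and $A_t\xi\cdot\xi \ge \constant_1 \abs{\xi}^2$, the diffusion term is bounded below by $\sigma_0\constant_1\norm{\varphi}_{\HGamma}^2$. The advection term is controlled from below by the negative of its modulus, which by the same Cauchy--Schwarz--Poincar\'e chain as above is at least $-\abs{\bb}_{\infty}\constant_6 c_\Omega\norm{\varphi}_{\HGamma}^2$. Combining the two bounds,
\[
	a^{t}(\varphi,\varphi) \ge \left( \sigma_0\constant_1 - \abs{\bb}_{\infty}\constant_6 c_\Omega \right)\norm{\varphi}_{\HGamma}^2 =: c\,\norm{\varphi}_{\HGamma}^2,
\]
and the smallness hypothesis on $\abs{\bb}_{\infty}$ — quantitatively $\abs{\bb}_{\infty} < \sigma_0\constant_1/(\constant_6 c_\Omega)$ — guarantees $c > 0$.

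The main obstacle, and the point deserving the most care, is precisely this $t$-uniformity: one must resist absorbing $t$-dependent quantities into generic constants and instead invoke \eqref{eq:bounds} to obtain bounds that hold simultaneously for every $t \in \interval$, and likewise verify that the Poincar\'e constant — tied to the fixed reference domain $\Omega$ rather than the moving domain $\Omega_t$ — does not depend on $t$. Since $\constant_1$, $\constant_2$, $\constant_6$ arise from the $t$-uniform estimates \eqref{eq:bounds} and $c_\Omega$ depends only on the fixed domain $\Omega$, the coercivity constant $c = c(\Omega)$ is independent of $t$, as claimed; everything else reduces to routine Cauchy--Schwarz and Poincar\'e estimates.
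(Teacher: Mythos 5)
Your proposal is correct and follows essentially the same route as the paper: continuity via Cauchy--Schwarz together with the $t$-uniform bounds \eqref{eq:bounds} on $A_t$ and $C_t$, and coercivity by bounding the diffusion term below by $\sigma_0\constant_1\norm{\varphi}_{\HGamma}^2$ and absorbing the advection term under a smallness condition on $\abs{\bb}_{\infty}$. The only (cosmetic) differences are that you use the lower bound on $A_t$ directly where the paper splits $A_t = \vect{I} + (A_t - \vect{I})$, and you make the Poincar\'e constant explicit where the paper hides it in a generic $c$ --- the latter being, if anything, a point the paper itself acknowledges needs care.
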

\begin{proof}
	Assume \eqref{eq:assumptions} and that $\abs{ \bb^t}_{\infty}$ is sufficiently small -- to be specified later in the proof.
	Then, the following estimate holds
	\begin{align*}
		\abs{a^{t}(\varphi,\psi)} 
		&= \abs{\intO{\sigma^{t} A_{t} \nabla \varphi \cdot \nabla \psi} + \intO {  \bb^t \cdot C_{t}\nabla \varphi \psi}}\\
		&\leqslant c\sup_{t\in \interval} \left( |\sigma^{t}|_{\infty} \abs{A_{t}}_{\infty} + \abs{ \bb^t}_{\infty} \abs{C_{t}}_{\infty} \right) \norm{\varphi}_{\HGamma} \norm{\psi}_{\HGamma}\\
		&\leqslant c\sup_{t\in \interval} \left( \sigma^{t}_{1} \abs{A_{t}}_{\infty} + \abs{ \bb^t}_{\infty} \abs{C_{t}}_{\infty} \right) \norm{\varphi}_{\HGamma} \norm{\psi}_{\HGamma}\\
		&\leqslant c \norm{\varphi}_{\HGamma} \norm{\psi}_{\HGamma},	
	\end{align*}
	which shows the continuity of the given map.~
	On the other hand, for the coercivity of $a^{t}$, we use the boundedness of $A_{t}$ given in \eqref{eq:bounds}
	from which we obtain
	\[
		(\constant_{1}-1) |\xi|^2 \leqslant (A_{t}-\vect{I} )\xi \cdot \xi \leqslant (\constant_{2}-1)|\xi|^2.
	\]
	Then, we have the following estimate
	\begin{align*}
		a^{t}(\varphi,\varphi)
			&= \intO{\sigma^{t} \nabla \varphi \cdot \nabla \varphi} 
				+ \intO{\sigma^{t} \left(  A_{t}-\vect{I}  \right) \nabla \varphi \cdot \nabla \varphi}
				+ \intO { C_{t}^{\top}   \bb^t \cdot \nabla \varphi \varphi}\\
			&\geqslant \fergy{c} \sigma^{t}_0 \left( 1 +\constant_{1}-1\right) \norm{\varphi}_{\HGamma}^{2}
				- \abs{ \bb^t}_{\infty} \sup_{t \in \interval}  \abs{C_{t}}_{\infty}\norm{\varphi}_{\HGamma}^{2}\\
			&\geqslant \left( \fergy{c} \constant_{1} \sigma^{t}_{0} - \abs{ \bb^t}_{\infty} \sup_{t \in \interval}  \abs{C_{t}}_{\infty}\right) \norm{\varphi}_{\HGamma}^{2},
	\end{align*}
	\fergy{for some constant $c > 0$.}
	So, for sufficiently small $\abs{ \bb^t}_{\infty}$, more specifically, if $\abs{ \bb^t}_{\infty}$ is such that
	\[
		\abs{ \bb^t}_{\infty} < \frac{\correction{\constant}\sigma^{t}_{0}}{\sup_{t \in \interval}  \abs{C_{t}}_{\infty}}, 
	\]
	then
	\[
		a^{t}(\varphi,\varphi) \geqslant c \norm{\varphi}_{\HGamma}^{2},
	\]
	for some positive constant $c:=c(\Omega)$ that is independent of $t$. 
\end{proof}
\begin{lemma}\label{lem:solution_of_transformed_perturbed_problem}
For any ${\psi} \in \HGamma$, the function $u^{t} \in \HGamma$ solves the equation
\begin{equation}\label{eq:transformed_perturbed_problem}
\begin{aligned}
	a^{t} (u^{t}, \psi) = \intS { g \psi }.
\end{aligned}
\end{equation}
\end{lemma}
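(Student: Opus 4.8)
The plan is to derive the transformed weak form by pulling the variational equation satisfied by the perturbed state $u(\Omega_t)$ on $\Omega_t$ back to the fixed reference domain $\Omega$ through the change of variables $y = T_t(x)$. First I would record the weak formulation of \eqref{eq:state_un} posed on the perturbed domain: the perturbed state $u(\Omega_t)$, which vanishes on $\Gamma_t = T_t(\Gamma)$, satisfies $\intOt{\sigma \nabla u(\Omega_t) \cdot \nabla \phi} + \intOt{(\bb\cdot\nabla u(\Omega_t))\phi} = \intS{g\phi}$ for every admissible test function $\phi$ vanishing on $\Gamma_t$. A key preliminary remark is that, since $\operatorname{supp}\VV \subset \overline{D}_{\delta}$ stays at distance at least $\delta/3$ from $\Sigma$, the map $T_t$ is the identity in a neighborhood of $\Sigma$; hence $\Sigma$ --- together with its surface measure and the datum $g$ --- is left invariant, while $T_t$ maps $\Omega$ diffeomorphically onto $\Omega_t$ and $\Gamma$ onto $\Gamma_t$. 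Therefore $\psi \mapsto \psi\circ T_t^{-1}$ is a bijection from $\HGamma$ onto the corresponding test space on $\Omega_t$, so I may take $\phi = \psi\circ T_t^{-1}$ for arbitrary $\psi \in \HGamma$; this choice gives $\phi\circ T_t = \psi$ and $u^t = u(\Omega_t)\circ T_t$, and the homogeneous condition on $\Gamma$ is preserved, so that $u^t \in \HGamma$.

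Next I would transport each integral to $\Omega$ using $\intOt{F} = \intO{(F\circ T_t)\,\dd_t}$ together with the chain-rule identity $(\nabla_y w)\circ T_t = (DT_t)^{-\top}\nabla(w\circ T_t)$, applied to both $u(\Omega_t)$ and $\phi$. The diffusion integral then becomes $\intO{\sigma^t\,\dd_t\,(DT_t)^{-1}(DT_t)^{-\top}\nabla u^t\cdot\nabla\psi}$ and the advection integral becomes $\intO{\bb^t\cdot\dd_t(DT_t)^{-\top}\nabla u^t\,\psi}$. Recognizing $A_t = \dd_t(DT_t)^{-1}(DT_t)^{-\top}$ and $C_t = \dd_t(DT_t)^{-\top}$ from their definitions, these are precisely $\intO{\sigma^t A_t\nabla u^t\cdot\nabla\psi}$ and $\intO{\bb^t\cdot C_t\nabla u^t\,\psi}$, whose sum is $a^t(u^t,\psi)$ as introduced in Lemma~\ref{lem:continuity_and_coercivity_of_at}.

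Finally, since $T_t|_\Sigma = \mathrm{id}$ forces $ds_t = ds$ and $\phi = \psi$ on $\Sigma$, the right-hand side transforms trivially as $\intS{g\phi} = \intS{g\psi}$. Collecting the three computations yields $a^t(u^t,\psi) = \intS{g\psi}$ for all $\psi \in \HGamma$, which is exactly \eqref{eq:transformed_perturbed_problem}. The main obstacle --- really the only delicate point --- is the careful bookkeeping of transposes in the gradient transformation and the exact matching of the Jacobian-weighted matrix products with the definitions of $A_t$ and $C_t$; the invariance of the boundary term on $\Sigma$ is immediate once the support condition on $\VV$ is invoked.
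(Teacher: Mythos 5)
Your proposal is correct and follows essentially the same route as the paper: write the weak form of \eqref{eq:state_un} on $\Omega_t$, pull it back to $\Omega$ via the change of variables $T_t$ and the identity $(\nabla_y w)\circ T_t = (DT_t)^{-\top}\nabla(w\circ T_t)$, and identify the resulting Jacobian-weighted matrices with $A_t$ and $C_t$. If anything, your treatment is slightly more explicit than the paper's (in particular the justification that $T_t|_\Sigma = \mathrm{id}$ leaves the boundary term $\intS{g\psi}$ unchanged, which the paper uses tacitly), but there is no substantive difference in the argument.
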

\begin{proof}
Let $u_{t}=u(\Omega_{t})$ the solution of problem $\eqref{eq:state_un}$.
Then, we have
\fergy{
\[
       \intOt {\sigma \nabla{u}_{t} \cdot  \nabla \psi_{t}}
        +\intOt {\bb \cdot \nabla{u}_{t} \psi_{t} }
        = \intS {{g}_{t} \psi_{t}}, \qquad \forall \psi \in \HGamma,
\]} 
where $\psi_{t}=0$ on $\Gamma $ and $\nabla(u_{t})\circ T_{t} =(DT_{t})^{-\top}
\nabla{u}^{t}$ \ferj{(see, e.g., \cite[Eq.~(71)]{BacaniPeichl2013})} with $u^{t} \in \HGamma$ and \fergy{$g_{t} \circ T_{t} =g^{t}=g \in H^{1/2}(\Sigma)$} and 
$\bb \circ T_{t} = \bb^t$ and 
$\sigma \circ T_{t} = \sigma^{t}$.
By the change of variable, the transported function $u^{t}({x})=(u(\Omega_{t}) \circ T_{t})(x)$, $x \in \Omega$, solves the following variational equation
\begin{equation}\label{eq:transported_ut}
\begin{aligned}
   \intO{\sigma^{t} A_{t} \nabla{u}^{t} \cdot \nabla \psi}
    +\intO { \bb^t \cdot C_{t}\nabla{u}^{t} \psi} =
    \intS { g \psi }, \qquad  \forall \psi \in \HGamma,
\end{aligned}
\end{equation}
as desired.
\end{proof}
\begin{lemma}\label{Lem:IFT}
The solution $t\mapsto u^{t}$ of \eqref{eq:transformed_perturbed_problem} is ${{C}}^{1}$ in a neighborhood of $~0$.
\end{lemma}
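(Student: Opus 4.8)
The plan is to establish the statement via the implicit function theorem in Banach spaces, exploiting the fact that, by Lemma~\ref{lem:continuity_and_coercivity_of_at}, the bilinear form $a^{t}$ is continuous and coercive on $\HGamma \times \HGamma$ uniformly in $t \in \interval$. First I would recast the transformed problem \eqref{eq:transformed_perturbed_problem} as the zero set of a single nonlinear map. Denoting by $\HGamma'$ the topological dual of $\HGamma$, I define
\[
	\mathcal{F} : \interval \times \HGamma \to \HGamma', \qquad \langle \mathcal{F}(t,\varphi), \psi \rangle := a^{t}(\varphi,\psi) - \intS{g\psi}, \quad \forall \psi \in \HGamma.
\]
By Lemma~\ref{lem:solution_of_transformed_perturbed_problem}, the transported state satisfies $\mathcal{F}(t, u^{t}) = 0$ for every $t \in \interval$; in particular $\mathcal{F}(0,u^{0}) = 0$, so that $u^{t}$ is characterized as the implicit solution of $\mathcal{F}(t,\cdot) = 0$.

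Next I would verify that $\mathcal{F}$ is of class $C^{1}$ near $(0, u^{0})$. Since the functional $\psi \mapsto \intS{g\psi}$ is independent of $t$ and $\varphi$, and since $\varphi \mapsto a^{t}(\varphi,\psi)$ is linear, the partial Fréchet derivative with respect to $\varphi$ reads $\langle \partial_{\varphi}\mathcal{F}(t,\varphi)h, \psi \rangle = a^{t}(h,\psi)$ for $h,\psi \in \HGamma$, which is independent of $\varphi$. The regularity in $t$ is the crucial point: the only $t$-dependence of $\mathcal{F}$ sits in the coefficient fields $\sigma^{t}A_{t}$ and $\bb^{t}C_{t}$. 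Invoking \eqref{eq:regular_maps}, the maps $t \mapsto A_{t}$ and $t \mapsto C_{t}$ are $C^{1}$ into $C^{0,1}(\overline{D})^{d\times d}$, while the transported coefficients $\sigma^{t} = \sigma \circ T_{t}$ and $\bb^{t} = \bb \circ T_{t}$ depend on $t$ in a $C^{1}$ fashion (with derivatives $\nabla\sigma\cdot\VV$ and $D\bb\,\VV$ at $t=0$), because $T_{t} = \operatorname{id} + t\VV$ is affine in $t$ and $\sigma, \bb \in W^{1,\infty}(D)$. Consequently $t \mapsto \sigma^{t}A_{t} \in L^{\infty}(\Omega)^{d\times d}$ and $t \mapsto \bb^{t}C_{t} \in L^{\infty}(\Omega)^{d}$ are $C^{1}$; since these coefficients enter $a^{t}$ paired against $L^{2}$ gradients, their difference quotients converge in the operator norm, which shows that $t \mapsto a^{t}$, and hence $\mathcal{F}$, is $C^{1}$.

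Then I would check that $\partial_{\varphi}\mathcal{F}(t,\varphi)$ is a topological isomorphism of $\HGamma$ onto $\HGamma'$ for each $t \in \interval$. This is immediate from Lemma~\ref{lem:continuity_and_coercivity_of_at}: continuity and coercivity, uniform in $t$, allow the Lax--Milgram lemma to be applied to $(h,\psi) \mapsto a^{t}(h,\psi)$, so the induced operator is boundedly invertible. With $\mathcal{F} \in C^{1}$, $\mathcal{F}(0,u^{0}) = 0$, and $\partial_{\varphi}\mathcal{F}(0,u^{0})$ invertible, the implicit function theorem furnishes some $\varepsilon > 0$ and a unique $C^{1}$ branch $t \mapsto \widetilde{u}^{t}$ with $\mathcal{F}(t, \widetilde{u}^{t}) = 0$ on $[0,\varepsilon)$. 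By the uniqueness of solutions to \eqref{eq:transformed_perturbed_problem} (again Lax--Milgram), $\widetilde{u}^{t} = u^{t}$, whence $t \mapsto u^{t}$ is $C^{1}$ in a neighborhood of $0$, as claimed.

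The main obstacle is the second step, namely establishing the joint $C^{1}$ regularity of $\mathcal{F}$, and specifically the differentiability in $t$ of the transported coefficients $\sigma^{t}$ and $\bb^{t}$ in the $L^{\infty}$ topology, which is the relevant one because the bilinear form pairs these coefficients with $L^{2}$ gradients. Here one must use carefully that $T_{t}$ is affine in $t$ together with the Lipschitz regularity of $\sigma$ and $\bb$, so that the difference quotients converge at the level of operator norms; the remaining ingredients (linearity in $\varphi$ and the Lax--Milgram isomorphism) are routine.
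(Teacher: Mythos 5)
Your proposal is correct and follows essentially the same route as the paper: recast \eqref{eq:transformed_perturbed_problem} as the zero set of a map $\mathcal{F}$ on $\interval \times \HGamma$, verify its $C^{1}$ regularity from the $C^{1}$ dependence of $A_{t}$, $C_{t}$, $\sigma^{t}$, $\bb^{t}$ on $t$, invoke Lemma~\ref{lem:continuity_and_coercivity_of_at} and Lax--Milgram to see that the partial derivative in the state variable is an isomorphism, and conclude by the implicit function theorem. The only cosmetic difference is that the paper applies the argument to the shifted unknown $u^{t}-u$ (so that the zero of $\mathcal{F}(0,\cdot)$ sits at the origin), whereas you work with $u^{t}$ directly; both versions rest on the same (paper-level) assertion that $t \mapsto \sigma^{t}$ and $t \mapsto \bb^{t}$ are $C^{1}$ into $L^{\infty}$.
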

\begin{proof}
To prove the claim, we will apply the implicit function theorem (IFT).
Upon careful examination of \eqref{eq:transformed_perturbed_problem}, it can be verified that $u^t-u$ represents the unique element in $\HGamma$ satisfying the variational equation
\[
\begin{aligned}
	&\intO{\sigma^{t} A_{t} \nabla (u^{t}-u) \cdot \nabla \psi}
	+\intO { \bb^t \cdot C_{t}\nabla (u^{t}-u) \psi}\\
	&\qquad = \intS { g \psi }
    	- \intO{\sigma^{t} A_{t} \nabla{u} \cdot \nabla \psi}
    	- \intO { \bb^t \cdot C_{t}\nabla{u} \psi}, 
	\quad  \forall \psi \in \HGamma,
\end{aligned}
\]
Using the duality pairing $\langle \cdot,\cdot\rangle$ between $\HGamma$ and its dual space ${V}^{\prime}(\Omega)$, we can define a function $\mathcal{F}: \interval \times \HGamma\longrightarrow {V'}(\Omega)$ by
\[
\begin{aligned}
\langle \mathcal{F}(t,\varphi),\psi\rangle 
	&= \intO{\sigma^{t} A_{t} \nabla (\varphi +u) \cdot \nabla \psi}
		+ \intO { \bb^t \cdot C_{t}\nabla (\varphi + u) \psi} 
		- \intS { g \psi }\\
	&=a^t(\varphi +u,\psi)- \intS { g \psi }, \quad (\varphi, \psi \in \HGamma).
\end{aligned}
\]
\fergy{Above, it suffices to assume relaxed regularities for the data and the domain to establish the boundedness of the map $\mathcal{F}$ through a duality pairing argument.
Because in \eqref{eq:regular_maps} the maps $[t \mapsto A_{t}]$, $[t \mapsto C_{t}]$, $[t \mapsto \sigma^{t}]$, and $[t \mapsto \bb^{t}]$ are ${{C}}^1$ in a neighborhood of $0$, then clearly $\mathcal{F}$ is ${{C}}^{1}$.}
\color{black}
Then, taking $\varphi = u^{t}-u \in \HGamma$, we have
\[
\begin{aligned}
\langle \mathcal{F}(t,u^{t}-u),\psi\rangle&= \intO{\sigma^{t} A_{t} \nabla{u}^{t} \cdot \nabla \psi}
    +\intO { \bb^t \cdot C_{t}\nabla{u}^{t} \psi} -
    \intS { g \psi }\\
    &=a^{t} (u^{t}, \psi) - \intS { g \psi }=0, \quad  \forall \psi \in \HGamma.
\end{aligned}
\]
The next step is to show that there exists a unique function $k$, a mapping $t \mapsto u^t-u$ from a neighborhood of $0$ to 
$\HGamma$ such that $\mathcal{F}(t,k(t))=0$.
To accomplish the task, let us note that $u^{t}-u$ solves uniquely $\mathcal{F}(t,u^{t}-u)=0$ in $\HGamma$. 
In addition, we see that
\fergy{
\[
	\langle \mathcal{F}(0,\varphi),\psi\rangle-\langle \mathcal{F}(0,0),\psi\rangle=\langle D_{\varphi}\mathcal{F}(0,0)\varphi,\psi\rangle=a^{t} (\varphi+ u, \psi)-a^{t} (u, \psi) =a^{t} (\varphi, \psi).
\]
}
By Riesz' representation theorem, with $\HGamma$ being a Hilbert space, we obtain
\[
	\langle D_{\varphi}\mathcal{F}(0,0)\varphi,\psi\rangle=d_{\varphi} \mathcal{F}(0,0)(\varphi,\psi)=a^{t} (\varphi, \psi).
\]
Using Lemma~\ref{lem:continuity_and_coercivity_of_at}, we deduce via Lax-Milgram lemma that $d_{\varphi} \mathcal{F}(0,0)$ is an isomorphism from $V({\Omega})$ to $V'({\Omega})$, and we conclude by IFT that the map $k$ given by $[t \mapsto u^{t}-u]$ is ${{C}}^{1}$ in a neighborhood of $0$.

To finish the proof, we will demonstrate that \eqref{eq:material_un} actually holds.
For this purpose, let us denote by $\dot{u} \in \HGamma$ the derivative of the map $[t \mapsto u^{t}-u] \in {{C}}^{1}([-\varepsilon, \varepsilon];\HGamma)$, $\varepsilon > 0$ sufficiently small, as $t \to 0$.
Differentiating the equation $\mathcal{F}(t,u^{t}-u)=0$ with respect to $t$, leads to
\fergy{
\[
a^{t} (\dot{u}, \psi) - l(u;\psi)
	= \langle D_{\varphi}\mathcal{F}(0,0)\dot{u},\psi\rangle
	+ \langle \dfrac{\partial}{\partial t} \mathcal{F}(0,0),\psi\rangle 
	= 0, \quad \forall\psi \in \HGamma,
\]
}
where $l$ is given by \eqref{eq:right_el}.
\end{proof}
\section{Proof of key identities}\label{appx:proofs_of_key_identities}
Here we provide proofs to the key identities used in this study. 

We start with the proof of identity \eqref{eq:grad_sig_gradu}.
\begin{proof}[Proof of identity \eqref{eq:grad_sig_gradu}]
Let  $\VV=(\theta_{1},\cdots,\theta_{d})^{\top}$ and $\sigma = \sigma(x)$, $x \in \mathbb{R}^{d}$, be differentiable.
For notational convenience, we write $\partial_{i} := \partial/\partial{x_{i}}$.
For example, $\sigma \nabla{u} =(\sigma \partial_{1}{u},\ldots, \sigma \partial_{d}{u})^{\top}$.
Now, by expansion, we have
\[ 
\nabla(\sigma \nabla{u} )
	=\left( \sigma \partial_{j}(\partial_{i}{u})+ \partial_{j}{\sigma}\partial_{i}{u} \right)_{{i, j}} 
	=\sigma \nabla^{2} u + \left(\partial_{j}{\sigma}\partial_{i}{u}\right)_{{i, j}},
	\qquad (1 \leqslant i, j \leqslant d).
\]
Additionally, let us note that
\[
\begin{aligned}
\left(\partial_{j}{\sigma}\partial_{i}{u}\right)_{{i, j}}\VV \cdot \nabla{v}
=\sum_{i=1}^{d}  \sum_{j=1}^{d} \partial_{j}{\sigma}\partial_{i}{u}\theta_{j} {\partial_{j}{v}}
=\sum_{i=1}^{d} \partial_{i}{\sigma}(\nabla{u} \cdot \nabla{v})\theta_{i}
&=\left(\sum_{i=1}^{d} \partial_{i}{\sigma}\theta_{i}\right)(\nabla{u} \cdot \nabla{v}) \\
&=(\nabla \sigma \cdot \VV)(\nabla{u}\cdot \nabla{v})
\end{aligned}
\]
Thus, we have
\[
\begin{aligned}
\nabla(\sigma \nabla{u} )\VV \cdot \nabla{v}
	&=\sigma \nabla^{2} u\VV \cdot \nabla{v} + \left(\partial_{j}{\sigma}\partial_{i}{u}\right)_{{i, j}}\VV \cdot \nabla{v}\\
	&=\sigma \nabla^{2} u \VV \cdot \nabla{v} + (\nabla \sigma \cdot \VV)(\nabla{u}\cdot \nabla{v}).
\end{aligned} 
\]
\end{proof}
\begin{proof}[Proof of the first identity in Lemma~\ref{eq:expressions_j}]
Because $\div {\curl \varphi}=0 $ for all $\varphi \in \HGamma $ and $u=p=0$ on $\Gamma$ while $\VV \in \sfTheta$ (i.e., $\VV = \vect{0}$ on $\Sigma$), then the application of integration-by-parts clearly yields
\[
\begin{aligned}
\intO{\curl(\sigma \nabla{u} \times \VV)\cdot \nabla{p}}
&=-\intO{\div{\curl(\sigma \nabla{u} \times \VV)}p}\\
&\qquad +\intdO{\curl (\sigma \nabla{u} \times \VV)\cdot {p}{\nn}}\\
&=0.
\end{aligned}
\]
The same holds when $u$ and $p$ are interchanged.
\end{proof}
\begin{proof}[Proof of the second identity in Lemma~\ref{eq:expressions_j}]
Let \fergy{$(u,p) \in  [\HGamma \cap H^{2}(\Omega)]^{2}$} and $\VV \in \sfTheta$.
Then, by straightforward computations, we have
\[
\begin{aligned}
 -\intO{{\dd} ( \sigma \nabla{p} \cdot \nabla{u})}
&= -\intG{( \sigma \nabla{p} \cdot \nabla{u}) \Vn } 
+ \intO{\VV \cdot \nabla ( \sigma \nabla{p} \cdot \nabla{u})}\\
&=-\intG{( \sigma \nabla{p} \cdot \nabla{u}) \Vn }
+\intO{\VV \cdot \left[\nabla^{\top} (\sigma \nabla{p})\nabla{u}+\nabla^{2} u (\sigma \nabla{p}) \right]}\\
&=-\intG{( \sigma \nabla{p} \cdot \nabla{u}) \Vn }
+\intO{\nabla (\sigma \nabla{p})\VV \cdot \nabla{u}}
+\intO{\sigma \nabla^{2} u \VV \cdot \nabla{p} }.
\end{aligned}
\]
Because $u = p = 0$ on $\Gamma$, then it immediately follows that
\[ 
    \intO{\nabla(\sigma\nabla{p})\VV \cdot \nabla{u}}
    +\intO{\sigma \nabla^{2} u \VV \cdot \nabla{p}}
    +\intO{ {\dd}(\sigma\nabla{p} \cdot \nabla{u})}
    = \intG{ \sigma \dn{p}\dn{u} \Vn }.
\]
\end{proof}
\begin{proof}[Proof of identity \eqref{eq:vanishing_term}]
Let us reconsider \eqref{eq:important_identity} with $\varphi = p \in \HGamma$, $\psi = \nabla {u} \cdot \VV$, $u \in \HGamma$, $\VV \in \sfTheta$, and $\vect{F} = \bb \in W^{1,\infty}(\Omega)^{d}$.
Then, in particular, $p = 0$ on $\Gamma$ and $\VV = \vect{0}$ on $\Sigma$, and we get
\begin{align*}
\intO{ \left[ {p}  (\nabla {u} \cdot \VV)  \operatorname{div}{\bb} + (\bb \cdot \nabla {p} ) (\nabla {u} \cdot \VV)   + (\bb \cdot \nabla (\nabla {u} \cdot \VV)  ) {p}  \right]}
	&= \intdO{ {p}  (\nabla {u} \cdot \VV)  (\bb \cdot \nn) }
	= 0,\\
\intO{ \left[ {p}  (\nabla {u} \cdot \bb) \dd + (\VV \cdot \nabla {p} ) (\nabla {u} \cdot \bb)   + (\VV \cdot \nabla (\nabla {u} \cdot \bb)  ) {p}  \right]}
	&= \intdO{ {p}  (\nabla {u} \cdot \bb)  (\VV \cdot \nn) } 
	= 0.
\end{align*}
Subtracting the second equation from the first one, we obtain the following sequence of equalities
\[
\begin{aligned}
&\intO{ \left[ (\bb\cdot \nabla{p})(\VV\cdot \nabla{u}) - (\bb \cdot\nabla{u})(\VV \cdot \nabla{p}) \right]}\\
&\quad =
    \intO{p(\nabla{u}\cdot \bb){\dd}}
    - \intO{p(\nabla{u}\cdot \VV)\operatorname{div}{\bb}}
    + \intO{\left[ \VV\cdot \nabla(\nabla{u}\cdot \bb)p - \bb\cdot \nabla(\nabla{u}\cdot \VV)p \right]}\\
&\quad =
    \intO{p(\nabla{u}\cdot \bb){\dd}}
    - \intO{p(\nabla{u}\cdot \VV)\operatorname{div}{\bb}}
    + \intO{\left[ D\bb( \VV \cdot  \nabla{u})p - D\VV (\bb\cdot \nabla{u})p \right]},\\
& \quad = \intO{\curl(\bb\times \VV)\cdot\nabla{u}{p}},
\end{aligned}
\]
where the second equation line follows from the fact that $\nabla^{2}{u}$ is symmetric.
\end{proof}
\end{document}